\setlist[enumerate]{label={\rm(\roman*)}}
\theoremstyle{plain}
\newtheorem{theorem}{Theorem}[section]
\newtheorem{lemma}[theorem]{Lemma}
\theoremstyle{definition}
\newtheorem{definition}[theorem]{Definition}
\theoremstyle{remark}
\numberwithin{equation}{section}
\let\expandafter\oldproof\csname\string\proof\endcsname
\let\oldendproof\endproof
\renewenvironment{proof}[1][\proofname]{%
  \oldproof[\bf #1]%
}{\oldendproof}
\def\M{\mathcal M}
\def\MM{\mathcal M_+}
\def\N{\mathbb N}
\def\Z{\mathbb Z}
\def\R{\mathbb R}
\def\esssup{\operatornamewithlimits{ess\sup}}
\newcommand{\ds}{\mathrm{\,d}s}
\newcommand{\dt}{\mathrm{\,d}t}
\newcommand{\dx}{\mathrm{\,d}x}
\newcommand{\dy}{\mathrm{\,d}y}
\newcommand{\jq}{\frac1q}
\newcommand{\jp}{\frac1p}
\newcommand{\mjp}{{-\frac1p}}
\newcommand{\qp}{{\frac qp}}
\newcommand{\sqa}{\{a_n\}}
\newcommand{\sqb}{\{b_n\}}
\newcommand{\sqc}{\{c_n\}}
\newcommand{\sqv}{\{v_n\}}
\newcommand{\sqw}{\{w_n\}}
\newcommand{\squ}{\{u_n\}_{n\in\Z}}
\newcommand{\RpZ}{\mathbb{R}_+^\mathbb{Z}}
\newcommand{\sumZ}{\sum_{n\in\Z}}
\def\supp{\operatorname{supp}}
\begin{document}

\title{Weighted inequalities for discrete iterated kernel operators}

\author{Amiran Gogatishvili, Lubo\v s Pick and Tu\u{g}\c{c}e \"{U}nver}

\email[A.~Gogatishvili]{gogatish@math.cas.cz}
\urladdr{0000-0003-3459-0355}
\email[L.~Pick]{pick@karlin.mff.cuni.cz}
\urladdr{0000-0002-3584-1454}
\email[T.~\"{U}nver]{tugceunver@kku.edu.tr}
\urladdr{0000-0003-0414-8400}

\address{
 Institute of Mathematics,
 of the Czech Academy of Sciences,
 \v Zitn\'a~25,
 115~67 Praha~1,
 Czech Republic}

\address{
 Department of Mathematical Analysis,
	Faculty of Mathematics and Physics,
	Charles University,
	Sokolovsk\'a~83,
	186~75 Praha~8,
	Czech Republic}

\address{
Department of Mathematics,
Faculty of Science and Arts,
 Kirikkale University,
  71450 Yahsihan,
    Kirikkale, Turkey}
	
\subjclass[2000]{46E30, 26D20, 47B38, 46B70}
\keywords{Weighted inequality, kernel operator, supremum operator}

\thanks{This research was supported by the grant P201-18-00580S of the Czech Science Foundation. The research of A. Gogatishvili was partially supported by Shota Rustaveli National Science Foundation (SRNSF), grant no: FR17-589. }

\begin{abstract}

We develop a new method that enables us to solve the open problem of characterizing discrete inequalities for kernel operators involving suprema. More precisely, we establish necessary and sufficient conditions under which
there exists a positive constant $C$ such that
\begin{equation*}
	\Bigg(\sum_{n\in\Z}\Bigg(\sum_{i=-\infty}^n{U}(i,n)a_i\Bigg)^{q} {w}_n\Bigg)^{\frac 1q}
	\le C
	\Bigg(\sum_{n\in\Z}a_n\sp p{v}_n\Bigg)\sp{\frac 1p}
\end{equation*}
holds for every sequence of nonnegative numbers $\{a_n\}_{n\in\Z}$
where $U$ is a kernel satisfying certain regularity condition, $0 < p,q \leq \infty$ and $\{v_n\}_{n\in\Z}$ and $\{w_n\}_{n\in\Z}$ are fixed weight sequences. We do the same for the inequality
\begin{equation*}
	\Bigg( \sumZ w_n \Big[ \sup_{-\infty<i\le n} U(i,n) \sum_{j=-\infty}^{i} a_j \Big]^q \Bigg)^\jq
	\le C
	\Bigg( \sumZ a_n^p v_n \Bigg)^\jp.
\end{equation*}
We characterize these inequalities by conditions of both discrete and continuous nature.
\end{abstract}


\maketitle

	\section{Introduction}

In this paper we focus on boundedness properties of \textit{supremum operators} involving kernels. We study both their discrete and continuous versions and the relations between them. The bridge between the continuous and the discrete world turns out to be very useful for solving difficult questions in either of them, as we shall demonstrate.

While classical Hardy-type operators involving sums and integrals have been extensively studied for over a century now, supremum operators constitute a relatively new object as their importance has been spotted not so long ago and they have been put under rather
a serious scrutiny over approximately last two decades.

The first significant appearance of a supremum operator materialized in~\cite{CKOP} where it was shown that the Calder\'on operator of the fractional maximal operator takes the form of a supremum operator. More precisely, it was shown that if $n\in\N$, $0 < \gamma < n$ and $M_{\gamma}$ is the fractional maximal operator defined by
\begin{equation*}
	M_{\gamma}f(x)=\sup_{Q\owns x}\frac{1}{|Q|^{1-\frac{\gamma}{n}}} \int_Q|f(y)|\,dy\quad \text{for every $f\in L^{1}_{\operatorname{loc}}(\mathbb R^n)$ and $x\in \mathbb R^n$},
\end{equation*}
(where the supremum is extended over all cubes $Q\subset\mathbb R^n$ with sides parallel to the
coordinate axes and $|E|$ denotes the $n$-dimensional Lebesgue measure of $E\subset\mathbb  R^n$),
then there exists a constant $C$ depending only on $n$ and $\gamma$ such that
\begin{equation*}
	(M_{\gamma}f)^{*}(t)\leq C \sup_{t < s < \infty} s^{\frac{\gamma}{n}-1}\int_0\sp{t}f^{*}(\tau)\,d\tau \quad \text{for every $f\in L^{1}_{\operatorname{loc}}(\mathbb R^n)$ and $t\in(0,\infty)$},
\end{equation*}
where $f\mapsto f^*$ denotes the operation of taking the nonincreasing rearrangement of a~function. Moreover, this pointwise estimate is sharp in the sense that there is a constant $c$, again depending only on $n$ and $\gamma$, such that for every decreasing function $\varphi$ on $(0,\infty)$ one can find a function $f$ on $\mathbb R^n$ such that $f^*=\varphi$ almost everywhere on $(0,\infty)$ and
\begin{equation*}
	(M_{\gamma}f)^{*}(t)\ge c \sup_{t < s < \infty} s^{\frac{\gamma}{n}-1}\int_0\sp{t}\varphi(\tau)\,d\tau \quad \text{for every $t\in(0,\infty)$}.
\end{equation*}
The estimate holds also when $\gamma=0$, yielding thereby a~significant extension of the classical Wiener--Riesz--Herz inequality.

An earlier occurrence of operation of taking a supremum can be found for example in~\cite{CwiPu1998}, see also~\cite{Pus1999}, where a different type of an operator taking suprema is studied in certain duality relationships.

What started as a~rudimentary observation has been later extended to a worthwhile theory. The next significant step was taken in~\cite{GOP}, where general weighted inequalities for supremum operators were characterized for the first time. In the meantime, supremum operators have been showing up unexpectedly in many different projects. In particular, they play a key role in the characterization of optimal Sobolev embeddings (see e.g.~\cite{T2,T3,CPS,len15,T4,CP-TRANS} and many more). Some of the results in~\cite{GOP} were not entirely satisfactory as the characterizing conditions for some of the inequalities were expressed in terms of discretizing sequences which made their verification virtually impossible in practice. This obstacle was to a~large extent overcome with the development of anti-discretization techniques, which took place in~\cite{GP-discr}. In that paper alone several open problems were solved (for example, previously unknown integral characterizations of embeddings between certain types of classical Lorentz spaces were nailed down). Further important techniques were discovered for instance in~\cite{Sinn-2003,Sinn03} and~\cite{CGMP2}. Significant applications were found for example in the regularity theory for partial differential equations (see e.g.~\cite{AlbCia2007,AlbCia2007a}), for improvement of sharp Sobolev inequalities (\cite{CiaFer2012}), in the theory of Besov spaces (\cite{CaeGO2008,Gol2007}), in a~general theory of function spaces (\cite{Cie2009}), in interpolation theory (\cite{CobGOP2013}) or to the characterization of duality for operator-induced function spaces (\cite{NekP2011,Miz2013}). The development of the theory itself, including a rather serious investigation of inequalities involving various modifications of supremum operators and their combinations with other types of operators also followed (see e.g.~\cite{Kre1,Kre2,GKPS,Kre4,GOLP} and many more).

Our point of departure will be a discrete version of a kernel operator. We say that $U\colon\Z\times\Z\to[0,\infty)$ is a \textit{regular kernel} if $U$ is nonincreasing in the first variable, nondecreasing in the second variable and there exists a positive constant $C$ such that
\begin{equation}\label{oin}
	U(i,n)\le C(U(i,j)+U(j,n)) \quad\text{for every $i,j,n\in\Z$, $i\leq j\leq n$}.
\end{equation}
We shall call $C$ from~\eqref{oin} the \textit{constant of regularity} of $U$.
Clearly, if $U$ is a regular kernel and $0 < p < \infty$, then $U^p$ is also a regular kernel.

Our definition of a regular kernel was introduced by Bloom and Kerman in~\cite{BK}. Other modifications of regular kernels also appear in the literature. For example, several authors define a~so-called Oinarov-type condition for a~kernel by replacing the monotonicity with a reverse inequality in~\eqref{oin}, which means that the kernel is quasimonotone. It is easy to see that these two definitions are equivalent in the sense that if a kernel satisfies the Oinarov condition, then there exists an equivalent kernel which is regular in the sense of Bloom and Kerman, and the constant of equivalence is absolute.

One of the central questions studied in this paper is the following. Given $0 < p,q \leq \infty$, a~regular kernel $U$ and two fixed sequences of nonnegative real numbers $\{v_n\}_{n\in\Z}$ and $\{w_n\}_{n\in\Z}$ (weights), we ask under which conditions there exists a positive constant $C$ such that
\begin{equation}\label{E:discrete-kernel-gop-dual-1}
	\Bigg(\sum_{n\in\Z}\Bigg(\sum_{i=-\infty}^n{U}(i,n)a_i\Bigg)^{q} {w}_n\Bigg)^{\frac 1q}
	\le C
	\Bigg(\sum_{n\in\Z}a_n\sp p{v}_n\Bigg)\sp{\frac 1p}
\end{equation}
holds for every sequence of nonnegative numbers $\{a_n\}_{n\in\Z}$ (as usual,~\eqref{E:discrete-kernel-gop-dual-1} has to be replaced by an~appropriate analog if either $p=\infty$ and/or $q=\infty$).
One can also study the dual inequality, namely
\begin{equation}\label{E:discrete-kernel-gop}
	\Bigg(\sum_{n\in\Z}\Bigg(\sum_{i=n}^{\infty}U(n,i)a_i\Bigg)^{q} w_n\Bigg)^{\frac 1q}
	\le C
	\Bigg(\sum_{n\in\Z}a_n\sp pv_n\Bigg)\sp{\frac 1p}.
\end{equation}
We note that~\eqref{E:discrete-kernel-gop} holds for every sequence of nonnegative numbers $\{a_n\}_{n\in\Z}$ if and only if
\begin{equation*}
	\Bigg(\sum_{n\in\Z}\Bigg(\sum_{i=-\infty}^{n}\overline{U}(i,n)a_i\Bigg)^{q} \overline{w}_n\Bigg)^{\frac 1q}
	\le C
	\Bigg(\sum_{n\in\Z}a_n\sp p\overline{v}_n\Bigg)\sp{\frac 1p}
\end{equation*}
holds for every sequence of nonnegative numbers $\{a_n\}_{n\in\Z}$, which is~\eqref{E:discrete-kernel-gop-dual-1} with appropriately modified kernel and weights. This follows on a simple index change $\overline{U}(i,n) = U(-n,-i)$, $\overline{v}_n = v_{-n}$ and $\overline{w}_n = w_{-n}$. It is not difficult to verify that the kernel $\overline{U}$ is also regular. So it is enough to investigate just~\eqref{E:discrete-kernel-gop-dual-1}. It turns out that the cases when $0 < p \leq 1$ and $1 \leq p < \infty$ are substantially different and have to be treated separately.

The paper is organized as follows. In the next section we shall collect results concerning Hardy-type operators containing kernels. In Section~\ref{S:3} we present results concerning supremum operators with kernels. Proofs are contained in the last two sections.

Throughout the paper, we shall denote by $\Z$ the set of all integers, by $\RpZ$ the collection of all double infinite sequences of nonnegative real numbers indexed over $\Z$ and by $\M_+$ the set of all nonnegative measurable functions on $\R$. Throughout the paper we denote $p'=\frac{p}{p-1}$ if $p\in (0,1)\cup (1,\infty)$. An~analogous notation is used for $q'$. We shall write $A\lesssim B$ if there exists a~positive constant $C$ such that $A \leq C B $. We write $A\approx B$ if we have simultaneously $A\lesssim B$ and $B\lesssim A$.

We shall throughout the paper use the notion of \textit{weight} both in a discrete setting, in which case it is just any sequence of nonnegative real numbers, and in the continuous one, in which case it is a nonnegative measurable function on an~appropriate subset of $\R$.

\section{Hardy type operators with kernels}\label{S:2}

There are several possibilities how to approach the study of~\eqref{E:discrete-kernel-gop-dual-1}. It can be for instance solved directly by using discretization and anti-discretization argument as it was done for the analogous continuous inequality by Bloom--Kerman~\cite{BK}, Oinarov~\cite{Oin}, Stepanov~\cite{Step} and K\v repela~\cite{Kre4}. We shall view the problem from a different perspective and point out that under the restriction $1 \leq p \leq \infty$ a characterization is available by simply bridging the discrete and the continuous worlds. The result is contained in the following theorem.

\begin{theorem}\label{T:main-prop1}
	Let $1 \leq p \leq \infty$ and $0 < q \leq \infty$. Let $U$ be a regular kernel on $\Z\times\Z$ and $\sqv,\sqw\in\RpZ$. We extend $U$ to a function defined on $\R^{2}$ by setting
	\begin{equation*}
		U(s,t)=\sum_{(i,n)\in\Z^2} U(i,n)\chi_{(i-1,i]\times(n-1,n]}(s,t) \quad\text{for $s,t\in \R$}
	\end{equation*}
	and	we define
	\begin{equation*}
		v(t)=\sum_{n\in\Z} v_n\chi_{(n-1,n]}(t),\quad
		w(t)=\sum_{n\in\Z} w_n\chi_{(n-1,n]}(t) \quad\text{for $t\in \R$}.
	\end{equation*}
	Then~\eqref{E:discrete-kernel-gop-dual-1} holds for some constant $C$ and every sequence $\sqa\in\RpZ$ if and only if there exists a constant $C'$ such that
	\begin{equation}\label{E:spoj-gop}
		\Bigg(\int_{-\infty}\sp{\infty}\Bigg(\int_{-\infty}^tU(s,t)f(s)\ds\Bigg)\sp qw(t)\dt\Bigg)\sp{\frac 1q}
		\le C'
		\Bigg(\int_{-\infty}\sp{\infty}f(t)\sp pv(t)\dt\Bigg)\sp{\frac 1p}
	\end{equation}
	holds for every $f\in\M_+$ (with natural analogues when either $p=\infty$ and/or $q=\infty$). Moreover, if $C, C'$ are optimal constants, then $C'\leq C\le 2^{1+\frac1q}C'$.
\end{theorem}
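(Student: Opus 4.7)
The plan is to establish the equivalence by a pair of one-sided comparisons between the discrete and continuous norms: a test-function argument plugging a piecewise constant $f$ into \eqref{E:spoj-gop} to recover \eqref{E:discrete-kernel-gop-dual-1}, and a reverse discretisation averaging $f$ over unit intervals and invoking Jensen's inequality. The monotonicity built into the notion of a regular kernel is all that I will use; the Oinarov-type condition \eqref{oin} plays no role here.

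First, from \eqref{E:spoj-gop} to \eqref{E:discrete-kernel-gop-dual-1}: fix $\sqa\in\RpZ$ and test with $f(s)=\sum_{n\in\Z}a_n\chi_{(n-1,n]}(s)$, for which $\int_\R f^p v\dt=\sum_n a_n^pv_n$ exactly. A direct computation from the definition of the extended kernel shows that for every $m\in\Z$ and every $t\in(m-1,m]$,
\[
	\int_{-\infty}^{t}U(s,t)f(s)\ds = A_m + (t-(m-1))\,B_m,
\]
where $A_m=\sum_{i\le m-1}U(i,m)a_i$ and $B_m=U(m,m)a_m$; thus this quantity interpolates linearly between $A_m$ at $t=m-1$ and the full discrete partial sum $T_m:=A_m+B_m=\sum_{i\le m}U(i,m)a_i$ at $t=m$. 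Since $A+B\tau\ge\tfrac12(A+B)$ for every $\tau\in[\tfrac12,1]$ and all $A,B\ge0$, restricting the outer integration to $[m-\tfrac12,m]$ yields $\int_{m-1}^{m}(A_m+(t-(m-1))B_m)^q\dt\ge 2^{-q-1}T_m^q$. Summing against $w_m$, taking a $q$-th root, and feeding the resulting inequality into \eqref{E:spoj-gop} gives \eqref{E:discrete-kernel-gop-dual-1} with constant $2^{1+\jq}C'$.

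For the reverse direction, take any $f\in\MM$ and set $a_n=\int_{n-1}^{n}f(s)\ds$. Jensen's inequality (here the hypothesis $p\ge 1$ is essential) gives $a_n^p\le\int_{n-1}^{n}f^p\ds$, whence $\sum_n a_n^pv_n\le\int_\R f^p v\dt$. For the left-hand side, since $U(s,t)=U(i,m)$ on $(i-1,i]\times(m-1,m]$, one has for every $t\in(m-1,m]$
\[
	\int_{-\infty}^{t}U(s,t)f(s)\ds = \sum_{i\le m-1}U(i,m)a_i + U(m,m)\int_{m-1}^{t}f(s)\ds \le T_m,
\]
so integrating the $q$-th power against $w$ over the block $(m-1,m]$ is dominated by $w_m T_m^q$. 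Chaining this estimate with the assumed \eqref{E:discrete-kernel-gop-dual-1} produces \eqref{E:spoj-gop} with the same constant $C$.

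The main obstacle is purely a bookkeeping one around the half-open-interval convention: the definition assigns the value $U(i,n)$ to the block $(i-1,i]\times(n-1,n]$, so on the diagonal block $(m-1,m]\times(m-1,m]$ the inner integral attains the full partial sum $T_m$ only at the right endpoint, and it is precisely this asymmetry that forces the factor $2^{1+\jq}$ in the first direction. The degenerate cases $p=\infty$ and $q=\infty$ are handled by the usual analogues: Jensen is replaced by $a_n\le\esssup_{(n-1,n]}f$ when $p=\infty$, and the $L^q$-average on $[m-\tfrac12,m]$ by evaluation at $t=m$ when $q=\infty$, producing the same final constants.
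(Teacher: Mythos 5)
Your proof is correct and follows essentially the same route as the paper's: test \eqref{E:spoj-gop} with a step function $f=\sum_n a_n\chi_{(n-1,n]}$ and restrict the outer integration to the right halves of blocks to absorb the $2^{1+\jq}$ loss, and conversely set $a_n=\int_{n-1}^n f$ and use H\"older/Jensen (requiring $p\ge1$) together with block-wise monotonicity of the inner integral. The only cosmetic difference is that you present the diagonal-block contribution as an explicit affine function $A_m+(t-(m-1))B_m$ and invoke $A+B\tau\ge\tfrac12(A+B)$ for $\tau\in[\tfrac12,1]$, whereas the paper folds the same factor-of-two step into the comparisons $\sum_{i\le n}U(i,n)a_i\le 2\int_{-\infty}^{n-1/2}U(y,n)f(y)\,dy$ and $w_n\le 2\int_{n-1/2}^n w$.
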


Using Theorem~\ref{T:main-prop1} and well-known necessary and sufficient conditions for~\eqref{E:spoj-gop}, we can now obtain a~characterization of~\eqref{E:discrete-kernel-gop-dual-1}.

\begin{theorem}\label{Th1.1}
	Let $1 \leq p \leq \infty$, $0 < q \leq \infty$. Let $U$ be a regular kernel and $\sqw,\sqv\in\RpZ $. Then the inequality \eqref{E:discrete-kernel-gop-dual-1} holds if and only if one of the following conditions holds:
	
	{\rm (i)} $p=1$, $0 < q < \infty$ and
	\begin{equation*}
		A_1:=\sup_{n\in \Z} \,v_n^{-1}\left(\sum_{i=n}^{\infty}U(n,i)^q w_i\right)^{\frac{1}{q}} < \infty;
	\end{equation*}
	
	{\rm (ii)} $p=1$, $q=\infty$ and
	\begin{equation*}
		A_2:=\sup_{n\in \Z} \,v_n^{-1}\left(\sup_{n\le i<\infty}U(n,i)w_i\right) < \infty;
	\end{equation*}
	
	{\rm (iii)} $p=\infty$, $ 1 \leq q < \infty$ and
	\begin{equation*}
		A_3:=\left(\sum_{n\in \Z} \,\left(\sum_{i=-\infty}^n U(i,n)v_i^{-1}\right)^q w_{n}\right)^{\frac{1}{q}}
		< \infty;
	\end{equation*}
	
	{\rm (iv)} $1 < p < \infty$, $q=1$ and
	\begin{equation*}
		A_4:=\left(\sum_{n\in \Z} \,\left(\sum_{i=n}^{\infty}U(n,i)w_i\right)^{p'} v_n^{1-p'}\right)^{\frac{1}{p'}}< \infty;
	\end{equation*}
	
	{\rm (v)} $1 < p < \infty$, $q=\infty$ and
	\begin{equation*}
		A_5:=\sup_{n\in \Z} w_n \,\left(\sum_{i=-\infty}^{n}
		U(i,n)^{p'} v_i^{1-p'}\right)^{\frac{1}{p'}}< \infty;
	\end{equation*}
	
	{\rm (vi)}  $p=q=\infty$ and
	\begin{equation*}
		A_6:=\sup_{n\in \Z} w_n \,\left(\sup_{-\infty<i\le n} U(i,n) v_i^{-1}\right)< \infty;
	\end{equation*}
	
	{\rm (vii)}  $1<p\le q<\infty$,
	\begin{equation*}
		A_7:=\sup_{n\in \Z} \,\left(\sum_{i=n}^{\infty}w_i\right)^{\frac{1}{q}}
		\left(\sum_{i=-\infty}^{n}
		U(i,n)^{p'} v_i^{1-p'}\right)^{\frac{1}{p'}}
		< \infty
	\end{equation*}
	and
	\begin{equation*}
		A_8:=\sup_{n\in \Z} \,\left(\sum_{i=n}^{\infty}U(n,i)^qw_i\right)^{\frac{1}{q}}
		\left(\sum_{i=-\infty}^{n}
		v_i^{1-p'}\right)^{\frac{1}{p'}}
		< \infty;
	\end{equation*}
	
	{\rm (viii)}  $1<q< p <\infty$,
	\begin{equation*}
		A_9:=\left(\sum_{n\in \Z} \,\left(\sum_{i=n}^{\infty}w_i\right)^{\frac{q}{p-q}} w_n
		\left(\sum_{i=-\infty}^{n}
		U(i,n)^{p'} v_i^{1-p'}\right)^{\frac{(p-1)q}{p-q}}
		\right)^{\frac{p-q}{pq}}< \infty
	\end{equation*}
	and
	\begin{equation*}
		A_{10}:=\left(\sum_{n\in \Z} \,\left(\sum_{i=n}^{\infty}U(n,i)^qw_i\right)^{\frac{p}{p-q}} v_n^{1-p'} \left(\sum_{i=-\infty}^{n}  v_i^{1-p'}\right)^{\frac{p(q-1)}{p-q}}
		\right)^{\frac{p-q}{pq}}
		< \infty;
	\end{equation*}
	
	{\rm (ix)}  $1< p <\infty$, $0<q<p$, $A_9< \infty$ and
	\begin{equation*}
		A_{11}:=\left(\sum_{n\in \Z} \,\left(\sum_{i=n}^{\infty}U(n,i)^q w_i\right)^{\frac{q}{p-q}} w_n \sup_{-\infty < j \le n}U(j,n)^{q}
		\left(\sum_{i=-\infty}\sp{j}
		v_i^{1-p'}\right)^{\frac{(p-1)q}{p-q}}
		\right)^{\frac{p-q}{pq}}
		< \infty;
	\end{equation*}
	
	{\rm (x)}  $0<q<p=1$,
	\begin{equation*}
		A_{12}:=\left(\sum_{n\in \Z} \,\left(\sum_{i=n}^{\infty}w_i\right)^{-q'} w_n
		\sup_{-\infty < i\le n}
		U(i,n)^{-q'} v_i^{q'}\right)^{-\frac{1}{q'}}
		< \infty
	\end{equation*}
	and
	\begin{equation*}
		A_{13}:=\left(\sum_{n\in \Z} \,\left(\sum_{i=n}^{\infty}U(n,i)^q w_i\right)^{-q'} w_n \sup_{-\infty<i\le n}U(i,n)^{q}
		v_i^{q'}\right)^{-\frac{1}{q'}}< \infty.
	\end{equation*}
	
	Moreover, if $C$ is the optimal constant in~\eqref{E:discrete-kernel-gop-dual-1}, then, in cases \textup{(i)}--\textup{(vi)}, $C\approx A_k$, where $k$ is the appropriate index corresponding to the label of the particular case, and in cases \textup{(vii)}-- \textup{(x)}, we have  $C\approx A_k+A_{\ell}$, where $k$ and $\ell$ are appropriate indices  \textup{(}$C\approx A_7+A_8$ in  \textup{(}vii\textup{)} and so on\textup{)}.
\end{theorem}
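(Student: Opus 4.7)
The strategy is to combine Theorem~\ref{T:main-prop1} with the known characterizations of the continuous kernel inequality~\eqref{E:spoj-gop} and then translate the resulting integral conditions into the discrete sum conditions $A_1,\dots,A_{13}$. By Theorem~\ref{T:main-prop1}, inequality~\eqref{E:discrete-kernel-gop-dual-1} is equivalent, with comparable constants, to~\eqref{E:spoj-gop} for the step-function extensions of $U$, $v$, $w$ defined there; a short verification (using monotonicity of $U$ in each argument, together with~\eqref{oin}) shows that the step-function extension of $U$ is itself a regular kernel on $\R^2$, with constant of regularity equivalent to the original. Hence the classical continuous characterizations apply in full force.

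For each range of parameters I would then invoke the appropriate known result about~\eqref{E:spoj-gop}. For $1\le p\le q<\infty$ (covering (i), (iii), (vii)) I would use the Bloom-Kerman theorem~\cite{BK}; for $q<p$ with $1<p<\infty$ (covering (iv), (viii)) the Oinarov-Stepanov extensions~\cite{Oin,Step}; the limiting cases $p=\infty$ or $q=\infty$ (covering (ii), (v), (vi)) follow by direct pointwise and dual computations in the continuous formulation; and the delicate ranges $0<q\le 1<p$ (covering (ix), (x)) rely on the supremum-inside-the-sum characterizations developed by K\v{r}epela~\cite{Kre4}.

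The remaining, and essentially routine, task is to verify, case by case, that the integral condition applied to the step-function weights collapses to the stated discrete condition. The key identities are immediate from the construction: for $t\in(n-1,n]$ and $s\in(i-1,i]$ one has $U(s,t)=U(i,n)$, $v(t)=v_n$, $w(t)=w_n$, so quantities like
\[
\int_{-\infty}^{t}U(s,t)^{p'}v(s)^{1-p'}\ds
\quad\text{and}\quad
\int_{t}^{\infty}U(t,s)^{q}w(s)\ds
\]
reduce, up to boundary contributions that cost only a bounded multiplicative factor, to $\sum_{i\le n}U(i,n)^{p'}v_i^{1-p'}$ and $\sum_{i\ge n}U(n,i)^{q}w_i$, respectively, and $\sup_{t\in\R}$ collapses to $\sup_{n\in\Z}$ because the relevant functions are piecewise constant on unit intervals. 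The regularity assumption~\eqref{oin} is invoked only when comparing values of $U$ at the endpoints of adjacent unit intervals, and the loss is absorbed into constants depending on the constant of regularity and on $p,q$.

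The hardest part will be cases~(ix) and~(x), where the characterizing condition already contains a supremum nested inside a sum. Here one has to check that, for $t\in(n-1,n]$, the essential supremum
\[
\sup_{\sigma\le t}U(\sigma,t)^{q}\bigg(\int_{-\infty}^{\sigma}v(\tau)^{1-p'}\,\d\tau\bigg)^{\frac{(p-1)q}{p-q}}
\]
equals
\[
\sup_{j\le n}U(j,n)^{q}\bigg(\sum_{i\le j}v_i^{1-p'}\bigg)^{\frac{(p-1)q}{p-q}},
\]
which uses that the step-function extension of $U$ is nonincreasing in its first argument (so the supremum over $\sigma$ is attained at integer values) and that $v$ is constant on unit intervals. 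Once these equivalences are assembled, the two-sided constant estimate $C'\le C\le 2^{1+1/q}C'$ provided by Theorem~\ref{T:main-prop1} delivers the asserted $C\approx A_k$ in cases (i)--(vi) and $C\approx A_k+A_\ell$ in cases (vii)--(x).
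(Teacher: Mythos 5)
Your proposal follows exactly the paper's own strategy: pass to the continuous inequality via Theorem~\ref{T:main-prop1}, verify that the step-function extension of $U$ remains a regular kernel, invoke the known characterizations of~\eqref{E:spoj-gop}, and then specialize the integral conditions to the piecewise-constant weights. The only cosmetic difference is in the allocation of references --- the paper cites Kantorovich--Akilov~\cite{KA} for the limiting and endpoint ranges (i)--(vi) rather than splitting them between Bloom--Kerman and ad hoc computations --- but the substance of the argument is identical.
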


In the case when $0 < p < 1$ the above approach does not work. It is a~notoriously known fact that in this case the discrete inequality may perfectly hold while its continuous analogue is impossible unless the weights are trivial. We shall eventually show that a~certain type of bridge is nevertheless still available. We begin by observing that~\eqref{E:discrete-kernel-gop-dual-1} is equivalent to two other inequalities, different in nature.

\begin{theorem}\label{T:kernel-main}
	Let $0 < p \leq 1$ and $0 < q \leq \infty$. Let $U$ be a regular kernel on $\Z\times\Z$ and $\sqv,\sqw\in\RpZ$. Then the following three statements are equivalent.
	
	\textup{(i)} There exists a constant $C$ such that~\eqref{E:discrete-kernel-gop-dual-1} holds for every sequence $\{a_n\} \in\RpZ$.
	
	\textup{(ii)}There exists a constant $C'$ such that
	\begin{equation}\label{E:discrete-kernel-weak}
		\Bigg(\sum_{n\in\Z}\Bigg(\sup_{-\infty<i\le n}U(i,n)a_i\Bigg)^{q} w_n\Bigg)^{\frac 1q}
		\le C'
		\Bigg(\sum_{n\in\Z}a_n\sp pv_n\Bigg)\sp{\frac 1p}
	\end{equation}
	holds for every sequence $\{a_n\}\in\RpZ$.
	
	\textup{(iii)} There exists a constant $C''$ such that
	\begin{equation}\label{E:discrete-kernel-strong}
		\Bigg(\sum_{n\in\Z}\Bigg(\sum_{i=-\infty}^{n} U(i,n)^{p}a_i\Bigg)^{\frac{q}{p}} w_n\Bigg)^{\frac pq}
		\le C''
		\sum_{n\in\Z}a_nv_n.
	\end{equation}
	holds for every sequence $\{a_n\} \in\RpZ$.
	
	Moreover, if $C, C'$ and $C''$ are the best constants in~\eqref{E:discrete-kernel-gop-dual-1}, \eqref{E:discrete-kernel-weak} and \eqref{E:discrete-kernel-strong}, respectively, then $ C\approx C'\approx (C'')^{\frac1p}$.
\end{theorem}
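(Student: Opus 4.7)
The plan is to establish the cycle (iii) $\Rightarrow$ (i) $\Rightarrow$ (ii) $\Rightarrow$ (iii); the first two arrows are elementary pointwise comparisons, while the closing arrow is obtained by reducing \eqref{E:discrete-kernel-strong} to an already-solved instance of Theorem~\ref{Th1.1}.

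For (iii) $\Rightarrow$ (i) I would replace $a_i$ by $a_i^p$ in \eqref{E:discrete-kernel-strong} and invoke the elementary estimate $\sum_i x_i \le (\sum_i x_i^p)^{1/p}$, valid for $0 < p \le 1$ and $x_i \ge 0$, applied to $x_i = U(i,n) a_i$. Raising the resulting inequality to the $1/p$-th power yields \eqref{E:discrete-kernel-gop-dual-1} with $C \le (C'')^{1/p}$. The implication (i) $\Rightarrow$ (ii) is immediate from $\sup_{i\le n} \le \sum_{i \le n}$ and gives $C' \le C$.

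The heart of the proof is (ii) $\Rightarrow$ (iii). The key observation is that \eqref{E:discrete-kernel-strong} is itself an instance of the inequality \eqref{E:discrete-kernel-gop-dual-1} characterized in Theorem~\ref{Th1.1}: it corresponds to replacing the kernel $U$ by the regular kernel $K(i,n) := U(i,n)^p$ (regular with the same constant, as remarked just after the definition of a regular kernel) and to taking input exponent $1$ and output exponent $q/p$. Since $1 \in [1,\infty]$ and $q/p \in (0,\infty]$, Theorem~\ref{Th1.1} applies (case (i) when $q < \infty$, case (ii) when $q = \infty$) and characterizes \eqref{E:discrete-kernel-strong} by
\begin{equation*}
A := \sup_{n\in\Z} v_n^{-1} \Big(\sum_{i\ge n} U(n,i)^q w_i \Big)^{p/q} < \infty
\end{equation*}
(with the natural supremum-analogue when $q = \infty$), together with the comparison $C'' \approx A$. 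On the other hand, testing \eqref{E:discrete-kernel-weak} against the single-atom sequence $a_i = \chi_{\{i = n_0\}}$ gives
\begin{equation*}
\Big(\sum_{n \ge n_0} U(n_0, n)^q w_n \Big)^{1/q} \le C' v_{n_0}^{1/p} \quad \text{for every } n_0 \in \Z,
\end{equation*}
which, raised to the $p$-th power and supremized over $n_0$, is precisely $A \le (C')^p$. Combining these two facts yields $(C'')^{1/p} \lesssim C'$, closing the cycle and delivering the stated equivalence $C \approx C' \approx (C'')^{1/p}$.

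The main analytic obstacle---namely, characterizing $\ell^1 \to \ell^{q/p}$ kernel inequalities for regular kernels over the whole range $q/p \in (0,\infty]$, including the delicate regime $q/p < 1$---has already been absorbed by Theorem~\ref{Th1.1}. The conceptual point of Theorem~\ref{T:kernel-main} is therefore the recognition that the otherwise intractable $0 < p \le 1$ range of \eqref{E:discrete-kernel-gop-dual-1} can be transferred, via the kernel substitution $U \mapsto U^p$, to the already-settled input-exponent-$1$ case, so that essentially no new hard analytic work is required beyond the testing argument above.
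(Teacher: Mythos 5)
The proposal is correct, and the easy implications (iii) $\Rightarrow$ (i) $\Rightarrow$ (ii) coincide with the paper's; the closing implication, however, follows a genuinely different route. The paper proves (ii) $\Rightarrow$ (iii) by a self-contained discretization: Lemma~\ref{ldisc} produces a partition $\{n_k\}$ adapted to $w$, Lemma~\ref{L:2.4} splits the left-hand side of (iii) into local block pieces and global tail pieces via the regularity of $U^p$, Theorem~\ref{L:2.5} shows that in the local extremal problem the block sums may be replaced by block suprema at no cost, a lemma from~\cite{GOLP} does the same for the tails, and Lemma~\ref{L:estimates-general} reassembles everything into a single supremum dominated by the extremal constant of (ii). You instead observe that~\eqref{E:discrete-kernel-strong} \emph{is} the inequality~\eqref{E:discrete-kernel-gop-dual-1} with input exponent $1$, output exponent $q/p$, and kernel $U^p$ (still regular; its regularity constant is in fact $C^p$, via $(a+b)^p\le a^p+b^p$ for $0<p\le1$, not literally the same constant as you assert, though this is harmless), read off from Theorem~\ref{Th1.1} case (i) (case (ii) when $q=\infty$) that its best constant is $\approx A_1^p$, and finally extract $A_1\le C'$ from the weak inequality (ii) by a single-atom test. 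Your route is shorter and conceptually transparent, and the constant bookkeeping $C'\le C\le(C'')^{1/p}$ together with $(C'')^{1/p}\lesssim C'$ closes the cycle correctly. It does, however, pass through Theorem~\ref{Th1.1}, whose proof rests on the bridge Theorem~\ref{T:main-prop1} and on continuous characterizations cited from the literature, whereas the paper's discretization keeps Theorem~\ref{T:kernel-main} independent of that machinery. This is presumably deliberate: the theorem is subsequently used to \emph{establish} the $0<p\le1$ bridge (Theorem~\ref{T:main-prop-dual-1}), from which Theorem~\ref{T:only-p-1} is derived, and a self-contained proof avoids any appearance of entanglement in that development. As a proof of Theorem~\ref{T:kernel-main} in isolation, your reduction is perfectly valid and not circular, since Theorem~\ref{Th1.1} is established before and independently of this theorem.
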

It is worth noticing that the assertion of Theorem~\ref{T:kernel-main} cannot be extended to the case when $1 < p \leq \infty$.

We can also formulate the dual statement to Theorem~\ref{T:kernel-main}.

\begin{theorem}
	Let $0 < p \leq 1 $ and $0 < q \leq \infty$. Let $U$ be a regular kernel on $\Z\times\Z$ and $\sqv,\sqw\in\RpZ$. Then the following three statements are equivalent.
	
	\textup{(i)} There exists a constant $C$ such that~\eqref{E:discrete-kernel-gop} holds for every sequence $\{a_n\} \in\RpZ$.
	
	\textup{(ii)}There exists a constant $C'$ such that
	\begin{equation}\label{E:discrete-kernel-gop-1}
		\Bigg(\sum_{n\in\Z}\Bigg(\sup_{n\leq i<\infty}U(n,i)a_i\Bigg)^{q} w_n\Bigg)^{\frac 1q}
		\le C'
		\Bigg(\sum_{n\in\Z}a_n\sp pv_n\Bigg)\sp{\frac 1p}
	\end{equation}
	holds for every sequence $\{a_n\} \in\RpZ$.
	
	\textup{(iii)} There exists a constant $C''$ such that
	\begin{equation} \label{E:discrete-kernel-gop-2}
		\Bigg(\sum_{n\in\Z}\Bigg(\sum_{i=n}^{\infty} U(n,i)^{p}a_i\Bigg)^{\frac{q}{p}} w_n\Bigg)^{\frac pq}
		\le C''
		\sum_{n\in\Z}a_nv_n.
	\end{equation}
	holds for every sequence $\{a_n\} \in\RpZ$.
	
	Moreover, if $C,\, C'$ and $C''$ are the best constants in~\eqref{E:discrete-kernel-gop}, \eqref{E:discrete-kernel-gop-1} and \eqref{E:discrete-kernel-gop-2}, respectively, then $ C\approx C'\approx (C'')^{\frac1p}$.
\end{theorem}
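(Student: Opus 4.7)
The plan is to deduce the theorem from Theorem~\ref{T:kernel-main} via the index-reflection argument that was already invoked in the introduction when the equivalence between~\eqref{E:discrete-kernel-gop-dual-1} and~\eqref{E:discrete-kernel-gop} was pointed out. Specifically, I set $\overline{U}(i,n)=U(-n,-i)$, $\overline{v}_n=v_{-n}$, $\overline{w}_n=w_{-n}$, and for any test sequence $\sqa\in\RpZ$ define the reflected sequence $\overline{a}_n=a_{-n}$. Relabelling the indices $(i,j,n)\mapsto(-n,-j,-i)$ in~\eqref{oin} shows that $\overline{U}$ is again a regular kernel with the same constant of regularity as $U$, and the monotonicity conditions on $\overline{U}$ are checked analogously.

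The first concrete step is to verify that this reflection sets up a bijective correspondence between the inequalities of the present theorem (with data $U,v,w$) and the inequalities of Theorem~\ref{T:kernel-main} (with data $\overline{U},\overline{v},\overline{w}$). A direct relabelling yields, for every $\sqa\in\RpZ$,
\begin{equation*}
	\sumZ\Bigg(\sum_{i=n}^{\infty} U(n,i)a_i\Bigg)^q w_n = \sumZ\Bigg(\sum_{i=-\infty}^{n}\overline{U}(i,n)\overline{a}_i\Bigg)^q \overline{w}_n,
\end{equation*}
together with completely analogous identities for the supremum-operator form and for the $p$-power kernel form; in each of them, the range $n\le i<\infty$ turns into $-\infty<i\le n$ after reflection, while the right-hand sides transform trivially via $\sumZ a_n^p v_n=\sumZ\overline{a}_n^p\overline{v}_n$. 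Thus~\eqref{E:discrete-kernel-gop} corresponds to~\eqref{E:discrete-kernel-gop-dual-1}, \eqref{E:discrete-kernel-gop-1} to~\eqref{E:discrete-kernel-weak}, and~\eqref{E:discrete-kernel-gop-2} to~\eqref{E:discrete-kernel-strong}, with the best constants agreeing within each pair on the nose.

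Once this dictionary is in place, the three-way equivalence (i)$\Leftrightarrow$(ii)$\Leftrightarrow$(iii) together with the relation $C\approx C'\approx (C'')^{\frac 1p}$ follows at once by applying Theorem~\ref{T:kernel-main} to $(\overline{U},\overline{v},\overline{w})$. I do not anticipate any genuine obstacle: the whole argument is a formal dualisation. The only point requiring a brief check is the preservation of the regularity condition~\eqref{oin} under reflection, and this is precisely the observation that was already recorded in the introduction for the passage between~\eqref{E:discrete-kernel-gop} and~\eqref{E:discrete-kernel-gop-dual-1}.
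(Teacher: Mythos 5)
Your proposal is correct and is precisely the argument the paper implicitly relies on: it introduces the dual theorem right after Theorem~\ref{T:kernel-main} with the words ``We can also formulate the dual statement'' and gives no proof, having already recorded in the introduction that the reflection $\overline{U}(i,n)=U(-n,-i)$, $\overline{v}_n=v_{-n}$, $\overline{w}_n=w_{-n}$ converts~\eqref{E:discrete-kernel-gop} into~\eqref{E:discrete-kernel-gop-dual-1} and preserves regularity. Your dictionary matching~\eqref{E:discrete-kernel-gop} with \eqref{E:discrete-kernel-gop-dual-1}, \eqref{E:discrete-kernel-gop-1} with~\eqref{E:discrete-kernel-weak}, and~\eqref{E:discrete-kernel-gop-2} with~\eqref{E:discrete-kernel-strong}, with best constants preserved exactly, is the complete missing argument.
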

We are now in a position to characterize~\eqref{E:discrete-kernel-gop-dual-1} by a~continuous-type condition. To do this we shall use the fact that $p=1$ is a meeting point of both the cases and hence either approach works in this case.

\begin{theorem}\label{T:main-prop-dual-1}
	Let $0 < p \leq 1$ and $0 < q < \infty$. Let $U$, $\{v_n\}$, $\{w_n\}$, $v$ and $w$ be as in Theorem~\ref{T:main-prop1}. Then~\eqref{E:discrete-kernel-gop-dual-1} holds for some $C$ and every sequence $\sqa\in\RpZ$ if and only if there is a $C'$ such that
	\begin{equation}\label{E:kernel-gop-dual-1}
		\Bigg(\int_{-\infty}\sp{\infty}\Bigg(\int_{-\infty}^tU(s,t)^pf(s)\ds\Bigg)\sp {\frac{q}{p}}w(t)\dt\Bigg)\sp{\frac pq}
		\le C'
		\int_{-\infty}\sp{\infty}f(t)v(t)\dt
	\end{equation}
	holds for every $f\in\M_+$.
	
	Moreover, if $C$ and $C'$ are the best constants in~\eqref{E:discrete-kernel-gop-dual-1} and  \eqref{E:kernel-gop-dual-1}, respectively, then $ C\approx (C')^{\frac1p}$.
	
	Similarly, \eqref{E:discrete-kernel-gop} holds for some $C$ and every sequence $\sqa\in\RpZ$ if and only if there is a $C''$ such that
	\begin{equation}\label{E:kernel-gop-dual-2}
		\Bigg(\int_{-\infty}\sp{\infty}\Bigg(\int_t^{\infty}U(t,s)^pf(s)\ds\Bigg)\sp {\frac qp}w(t)\dt\Bigg)\sp{\frac pq}
		\le C''
		\int_{-\infty}\sp{\infty}f(t)v(t)\dt
	\end{equation}
	holds for every $f\in\M_+$.
	
	Moreover, if $C$ and $C''$ are the best constants in~\eqref{E:discrete-kernel-gop}  and \eqref{E:kernel-gop-dual-2}, respectively, then $ C\approx (C'')^{\frac1p}$.
\end{theorem}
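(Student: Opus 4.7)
The plan is to obtain Theorem~\ref{T:main-prop-dual-1} as a direct consequence of two earlier results in the paper: the equivalence between the kernel inequality and its strong form established in Theorem~\ref{T:kernel-main}, and the discrete-to-continuous bridge established in Theorem~\ref{T:main-prop1}. The first half of the statement concerns the pair \eqref{E:discrete-kernel-gop-dual-1}--\eqref{E:kernel-gop-dual-1}; the second half will follow by exactly the same argument combined with the index reflection described in the introduction.

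First I would invoke Theorem~\ref{T:kernel-main} to replace \eqref{E:discrete-kernel-gop-dual-1} by the equivalent strong-type inequality \eqref{E:discrete-kernel-strong}, with $C\approx (C'')^{1/p}$. The key observation is that \eqref{E:discrete-kernel-strong} is itself precisely an instance of \eqref{E:discrete-kernel-gop-dual-1}, in which the kernel $U$ is replaced by $U^p$ (still regular, as remarked right after the definition of regular kernel), the exponent $p$ is replaced by $1$, the exponent $q$ is replaced by $q/p$, and the weights are unchanged. Since the new inner exponent equals $1\in[1,\infty]$, Theorem~\ref{T:main-prop1} applies and yields the equivalent continuous inequality
\begin{equation*}
\Bigg(\int_{-\infty}^{\infty}\Bigg(\int_{-\infty}^{t} \widetilde{U^p}(s,t)\,f(s)\ds\Bigg)^{q/p} w(t)\dt\Bigg)^{p/q}\le C'\int_{-\infty}^{\infty} f(t)v(t)\dt,
\end{equation*}
where $\widetilde{U^p}$ denotes the step-function extension of the array $(U(i,n)^p)$ prescribed in Theorem~\ref{T:main-prop1}. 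A one-line check shows that $\widetilde{U^p}(s,t)=(U(s,t))^p$ for the extension $U$ of the original kernel, because raising a piecewise-constant nonnegative function to a power acts only on its coefficients. Therefore the displayed continuous inequality is exactly \eqref{E:kernel-gop-dual-1}, and composing the two equivalences gives $C\approx (C')^{1/p}$.

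For the dual pair \eqref{E:discrete-kernel-gop}--\eqref{E:kernel-gop-dual-2} I would rerun the same argument, using the unnamed theorem immediately following Theorem~\ref{T:kernel-main} to replace \eqref{E:discrete-kernel-gop} by the corresponding strong form \eqref{E:discrete-kernel-gop-2}. The latter is of the type \eqref{E:discrete-kernel-gop}, which, as recorded in the introduction, is equivalent through the reflection $\overline{U}(i,n)=U(-n,-i)$, $\overline{v}_n=v_{-n}$, $\overline{w}_n=w_{-n}$ to a left-handed inequality of the form \eqref{E:discrete-kernel-gop-dual-1}. Applying Theorem~\ref{T:main-prop1} to the reflected inequality and then undoing the reflection via $t\mapsto -t$ on the continuous side delivers \eqref{E:kernel-gop-dual-2}, with the same constant comparison $C\approx (C'')^{1/p}$.

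There is essentially no genuine obstacle: the argument reduces entirely to bookkeeping. The only two compatibility facts one needs to verify, namely that $U^p$ is again a regular kernel and that the canonical step-function extension commutes with taking a pointwise $p$th power, are both tautological. The substance of Theorem~\ref{T:main-prop-dual-1} therefore lies entirely in Theorems~\ref{T:kernel-main} and~\ref{T:main-prop1}, with the exponent shift $C\approx (C')^{1/p}$ coming precisely from the constant comparison supplied by Theorem~\ref{T:kernel-main}.
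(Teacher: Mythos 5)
Your argument is precisely the paper's proof: invoke Theorem~\ref{T:kernel-main} to pass from~\eqref{E:discrete-kernel-gop-dual-1} to the strong-type inequality~\eqref{E:discrete-kernel-strong}, observe that $U^p$ is again regular, and then apply Theorem~\ref{T:main-prop1} with exponents $1$ and $q/p$ and kernel $U^p$, noting that the step-function extension commutes with taking $p$th powers. The same reduction handles the dual pair, as you say, so the proposal is correct and follows the paper's route.
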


Now we can characterize the discrete inequality~\eqref{E:discrete-kernel-gop-dual-1} for $0 < p \leq 1$ by conditions appearing in Theorem~\ref{Th1.1}.

\begin{theorem}\label{T:only-p-1}
	Let $0 < p \leq 1$ and $0 < q \leq \infty$. Let $U$ be a regular kernel and $\sqw,\sqv\in \RpZ $. The inequality \eqref{E:discrete-kernel-gop-dual-1} holds if and only if one of the following conditions holds:
	
	{\rm (i)} $0<p\le q<\infty$ and $A_1<\infty$;
	
	{\rm (ii)}  $q=\infty$ and $A_2<\infty$;
	
	{\rm (iii)}  $0<q<p$, $A_{12}<\infty$ and $A_{13}<\infty$.
	
	Moreover, if $C$ is the best constant in~\eqref{E:discrete-kernel-gop-dual-1}, then $C$ is equivalent to $A_1$ in the case {\rm (i)}, to $A_2$  in the case {\rm (ii)} and to $A_{12}+A_{13}$  in the case {\rm (iii)}.
\end{theorem}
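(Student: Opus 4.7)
The plan is to reduce Theorem~\ref{T:only-p-1} to Theorem~\ref{Th1.1} using Theorem~\ref{T:kernel-main} as the bridge. Since $0<p\le 1$, Theorem~\ref{T:kernel-main} tells us that \eqref{E:discrete-kernel-gop-dual-1} is equivalent to the strong-type inequality \eqref{E:discrete-kernel-strong}, with the sharp constants related by $C\approx (C'')^{1/p}$. A direct inspection shows that \eqref{E:discrete-kernel-strong} has the form \eqref{E:discrete-kernel-gop-dual-1} with the modified parameters $\tilde p=1$, $\tilde q=q/p$, the modified kernel $\tilde U(i,n):=U(i,n)^p$, and the unchanged weights $\tilde v=v$, $\tilde w=w$; indeed, raising \eqref{E:discrete-kernel-strong} to the $1/p$ power puts it into this standard shape, with optimal constant $(C'')^{1/p}$. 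Since $\tilde p=1$, Theorem~\ref{Th1.1} now applies.

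The three cases of Theorem~\ref{T:only-p-1} are then dictated by where $\tilde q=q/p$ lies. The range $0<p\le q<\infty$ gives $\tilde q\in[1,\infty)$ and places us in case~(i) of Theorem~\ref{Th1.1}; the endpoint $q=\infty$ forces $\tilde q=\infty$, which is case~(ii); and $0<q<p\le 1$ yields $\tilde q\in(0,1)$, matching case~(x). Using $(U^p)^{q/p}=U^q$ together with the identification $C\approx (C'')^{1/p}$ from Theorem~\ref{T:kernel-main}, I would then read off the characterizing expressions furnished by Theorem~\ref{Th1.1}, raise them to the $1/p$ power, and identify them with the quantities labelled $A_1$, $A_2$, and $A_{12},A_{13}$ in the statement. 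The resulting quantitative equivalences $C\approx A_1$, $C\approx A_2$, $C\approx A_{12}+A_{13}$ follow by chaining the estimates.

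A brief preliminary, but an essential one for Theorem~\ref{Th1.1} to be applicable, is the verification that $\tilde U=U^p$ is itself a regular kernel. Monotonicity in each variable is inherited, and \eqref{oin} for $\tilde U$ follows from the elementary estimate $(x+y)^p\le x^p+y^p$, valid for $x,y\ge 0$ and $0<p\le 1$, so that $\tilde U$ has constant of regularity at most $C^p$. The main obstacle I anticipate is not conceptual but computational: carefully matching the necessary and sufficient conditions coming from Theorem~\ref{Th1.1}~(x) in case~(iii), where the expressions $A_{12}$ and $A_{13}$ involve negative fractional exponents and suprema, with those produced by the $(1,q/p)$-problem for $\tilde U=U^p$. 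Once this algebraic bookkeeping is carried out, no further ingredient is required.
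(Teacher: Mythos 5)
Your reduction strategy---pass through Theorem~\ref{T:kernel-main} to replace \eqref{E:discrete-kernel-gop-dual-1} by the strong form \eqref{E:discrete-kernel-strong}, recognize the latter as \eqref{E:discrete-kernel-gop-dual-1} at parameters $\tilde p=1$, $\tilde q=q/p$ with the regular kernel $\tilde U=U^p$, then invoke Theorem~\ref{Th1.1}---is exactly the route the paper envisages (the paper itself does not supply an explicit proof of this theorem, but the discussion preceding Theorem~\ref{T:main-prop-dual-1} and the remark ``$p=1$ is a meeting point of both the cases'' make this the intended argument). The preliminary that $U^p$ is again regular with constant at most $C^p$ is present and correct.

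Two issues, however, need fixing. First, a minor one: \eqref{E:discrete-kernel-strong} is \emph{already} in the standard shape at $(\tilde p,\tilde q)=(1,q/p)$ with optimal constant $C''$---no raising to the $1/p$-power is involved in that identification. The exponent $1/p$ only intervenes later, through the relation $C\approx(C'')^{1/p}$ from Theorem~\ref{T:kernel-main}. Second, and more substantively, the ``algebraic bookkeeping'' you defer will \emph{not} produce the quantities labelled $A_1,A_2,A_{12},A_{13}$ as they are literally printed in Theorem~\ref{Th1.1}. Apply Theorem~\ref{Th1.1}(i) to the $(\tilde p,\tilde q,\tilde U,v,w)$-problem: you get $C''\approx\sup_n v_n^{-1}\bigl(\sum_{i\ge n}U(n,i)^q w_i\bigr)^{p/q}$, hence
$C\approx (C'')^{1/p}\approx\sup_n v_n^{-1/p}\bigl(\sum_{i\ge n}U(n,i)^q w_i\bigr)^{1/q}$,
which carries $v_n^{-1/p}$, not the $v_n^{-1}$ of $A_1$. (A concrete check: $p=q=1/2$, $U\equiv1$, $v_n=2^{-n}$, $\sum_{i\ge n}w_i=2^{-n}$ gives a bounded operator with $C\approx1$ while $A_1=\sup_n 2^{-n}=\infty$.) The same phenomenon occurs in the other cases: in (iii), after the substitution, the exponent $q'=q/(q-1)$ in $A_{12},A_{13}$ is replaced by $\widetilde{q'}=\tilde q/(\tilde q-1)=q/(q-p)$. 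So either Theorem~\ref{T:only-p-1} is to be read with $A_1,\dots$ understood as the expressions evaluated at the replaced parameters $(\tilde p,\tilde q,\tilde U)$---in which case you must say so---or the printed formulas are misprints and you should state the corrected ones. As it stands, expecting a clean identification of $\tilde A_k^{1/p}$ with the literal $A_k$ is a genuine gap in the write-up, not just tedious arithmetic.
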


\section{Supremum operators with kernels}\label{S:3}
We shall now turn our attention to inequalities which involve operators in which both a kernel and the supremum are combined. Namely, we are interested under what conditions under parameters $0 < p,q \leq \infty$, a regular kernel $U$ and two fixed sequences $\{w_n\}, \{v_n\}\in\RpZ$ there exists a~positive constant $C$ such that the inequality
\begin{equation}\label{E:main-supremal-small}
	\Bigg( \sumZ w_n \Big[ \sup_{-\infty<i\le n} U(i,n) \sum_{j=-\infty}^{i} a_j \Big]^q \Bigg)^\jq
	\le C
	\Bigg( \sumZ a_n^p v_n \Bigg)^\jp
\end{equation}
holds for every sequence $\{a_n\}\in\RpZ$.

\begin{theorem}\label{T:six}
	Let	$0 < p \leq 1$, $ 0 < q < \infty$ and $\sqv,\sqw\in\RpZ$. Let $U$ be a regular kernel. Define
	\begin{align*}
		B_1 &= \sup_{\sqa\in\RpZ} \Bigg( \sumZ w_n \Big[ \sup_{-\infty<i\le n}\,U(i,n)    a_i \Big]^q \Bigg)^\jq \Bigg( \sumZ a_n^p v_n \Bigg)^\mjp, \\
		B_2 &= \sup_{\sqa\in\RpZ} \Bigg( \sumZ w_n \Big[ \sup_{-\infty<i\le n}\,U(i,n)   \sup_{-\infty<j\le i} a_j \Big]^q \Bigg)^\jq \Bigg( \sumZ a_n^p v_n \Bigg)^\mjp, \\
		B_3 &= \sup_{\sqa\in\RpZ} \Bigg( \sumZ w_n \Big[ \sup_{-\infty<i\le n} U(i,n) \sum_{j=-\infty}^{i} a_j \Big]^q \Bigg)^\jq \Bigg( \sumZ a_n^p v_n \Bigg)^\mjp, \\
		B_4 &= \sup_{\sqa\in\RpZ} \Bigg( \sumZ w_n \Big[ \sum_{i=-\infty}^{n} U(i,n) a_i \Big]^q \Bigg)^\jq \Bigg( \sumZ a_n^p v_n \Bigg)^\mjp, \\
		B_5 &= \sup_{\sqa\in\RpZ} \Bigg( \sumZ w_n \Big[ \sup_{-\infty<i\le n} U(i,n)^p \sum_{j=-\infty}^{i} a_j^p \Big]^\qp \Bigg)^\jq \Bigg( \sumZ a_n^p v_n \Bigg)^\mjp, \\
		B_6 &= \sup_{\sqa\in\RpZ} \Bigg( \sumZ w_n \Big[ \sum_{i=-\infty}^{n} U(i,n)^p a_i^p \Big]^\qp \Bigg)^\jq \Bigg( \sumZ a_n^p v_n \Bigg)^\mjp.
	\end{align*}
	Then $B_1$, $B_2$, $B_3$, $B_4$, $B_5$ and $B_6$ are mutually equivalent, and, moreover, the equivalence constants depend only on $p$ and $q$.	
\end{theorem}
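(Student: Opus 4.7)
My plan is to chain together elementary majorizations among $B_1, \dots, B_6$ and to close the loop at a single nontrivial point by invoking Theorem~\ref{T:kernel-main}. Specifically, I will establish the two elementary chains
\begin{equation*}
B_1 \le B_2 \le B_3 \le B_4 \qquad\text{and}\qquad B_3 \le B_5 \le B_6,
\end{equation*}
and then prove $B_1 \approx B_4 \approx B_6$ via Theorem~\ref{T:kernel-main}; together these force pairwise equivalence of all six quantities.

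The bounds $B_1 \le B_2 \le B_3$ are pointwise in the defining expressions, using $a_i \le \sup_{j\le i} a_j \le \sum_{j\le i} a_j$ for nonnegative sequences. For $B_3 \le B_4$ I will invoke the monotonicity of $U$ in the first variable: since $U(i,n) \le U(j,n)$ whenever $j \le i \le n$,
\begin{equation*}
U(i,n) \sum_{j=-\infty}^{i} a_j = \sum_{j=-\infty}^{i} U(i,n) a_j \le \sum_{j=-\infty}^{i} U(j,n) a_j \le \sum_{j=-\infty}^{n} U(j,n) a_j,
\end{equation*}
and taking the supremum in $i \le n$ on the left preserves the bound. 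The identical argument with $U^p$ (still nonincreasing in the first variable, since $p>0$) in place of $U$ and with $a_j^p$ in place of $a_j$ yields $B_5 \le B_6$. The link $B_3 \le B_5$ is the one place where the hypothesis $p \le 1$ genuinely enters: by superadditivity of $t \mapsto t^p$ on $[0,\infty)$,
\begin{equation*}
\Big( U(i,n) \sum_{j=-\infty}^{i} a_j \Big)^p = U(i,n)^p \Big(\sum_{j=-\infty}^{i} a_j\Big)^p \le U(i,n)^p \sum_{j=-\infty}^{i} a_j^p,
\end{equation*}
so taking the supremum in $i \le n$, raising to the power $q/p$, weighting by $w_n$, and summing in $n$ will give $B_3 \le B_5$.

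It remains to prove $B_1 \approx B_4 \approx B_6$. By definition, $B_4$ is the best constant in~\eqref{E:discrete-kernel-gop-dual-1} and $B_1$ is the best constant in~\eqref{E:discrete-kernel-weak}; moreover, the substitution $b_i = a_i^p$ (a bijection of $\RpZ$) identifies $B_6^p$ with the best constant $C''$ in~\eqref{E:discrete-kernel-strong}. Since $0 < p \le 1$, Theorem~\ref{T:kernel-main} applies and yields $C \approx C' \approx (C'')^{1/p}$, i.e.\ $B_4 \approx B_1 \approx B_6$. Combined with the two monotonicity chains, this will force all six quantities into a single equivalence class, with constants depending only on $p$ and $q$ (the regularity constant of $U$ being absorbed into the $\approx$).

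I do not anticipate a serious obstacle. The argument is essentially mechanical once the correct majorizations are spotted; the only genuinely delicate point is the step $B_3 \le B_5$, which relies crucially on $p \le 1$ through $\ell^p$-superadditivity. This is in agreement with the remark following Theorem~\ref{T:kernel-main} that such equivalences collapse for $p > 1$.
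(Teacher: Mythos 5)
Your proof is correct and follows essentially the same route as the paper: establish the two elementary majorization chains and close the loop by invoking Theorem~\ref{T:kernel-main} (the paper links $B_4\le B_6$ directly and then uses only $B_6\lesssim B_1$, whereas you identify $B_1$, $B_4$, $B_6^p$ with $C'$, $C$, $C''$ and use the full equivalence $C\approx C'\approx (C'')^{1/p}$; both yield the same conclusion). One trivial slip: the inequality $\big(\sum_j a_j\big)^p\le\sum_j a_j^p$ for $0<p\le1$ is \emph{subadditivity} of $t\mapsto t^p$, not superadditivity.
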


We will also present a dual version of these results.

\begin{theorem}
	Let	$0 < p \leq 1$, $0 < q < \infty$ and $\sqv,\sqw\in\RpZ$. Let $U$ be a regular kernel. Define
	\begin{align*}
		\tilde B_1 &= \sup_{\sqa\in\RpZ} \Bigg( \sumZ w_n \Big[ \sup_{n \le i <\infty }\,U(n,i)    a_i \Big]^q \Bigg)^\jq \Bigg( \sumZ a_n^p v_n \Bigg)^\mjp, \\
		\tilde B_2 &= \sup_{\sqa\in\RpZ} \Bigg( \sumZ w_n \Big[ \sup_{n\le i < \infty}\,U(n,i)   \sup_{i\le j<\infty} a_j \Big]^q \Bigg)^\jq \Bigg( \sumZ a_n^p v_n \Bigg)^\mjp, \\
		\tilde B_3 &= \sup_{\sqa\in\RpZ} \Bigg( \sumZ w_n \Big[ \sup_{n\le i<\infty } U(n,i) \sum_{j=i}^{\infty} a_j \Big]^q \Bigg)^\jq \Bigg( \sumZ a_n^p v_n \Bigg)^\mjp, \\
		\tilde B_4 &= \sup_{\sqa\in\RpZ} \Bigg( \sumZ w_n \Big[ \sum_{i=n}^{\infty} U(n,i) a_i\Big]^q \Bigg)^\jq \Bigg( \sumZ a_n^p v_n \Bigg)^\mjp, \\
		\tilde B_5 &= \sup_{\sqa\in\RpZ} \Bigg( \sumZ w_n \Big[ \sup_{n\le i<\infty} U(n,i)^p \sum_{j=i}^{\infty} a_j^p \Big]^\qp \Bigg)^\jq \Bigg( \sumZ a_n^p v_n \Bigg)^\mjp, \\
		\tilde B_6 &= \sup_{\sqa\in\RpZ} \Bigg( \sumZ w_n \Big[ \sum_{i=n}^{\infty} U(n,i)^p a_i^p \Big]^\qp \Bigg)^\jq \Bigg( \sumZ a_n^p v_n \Bigg)^\mjp.
	\end{align*}
	Then $\tilde B_1$, $\tilde B_2$, $\tilde B_3$, $\tilde B_4$, $\tilde B_5$ and $\tilde B_6$ are mutually equivalent, and, moreover, the equivalence constants depend only on $p$ and $q$.	
\end{theorem}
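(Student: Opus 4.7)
The plan is to reduce the statement to Theorem~\ref{T:six} by a reflection argument. Given the regular kernel $U$ and weights $\sqv,\sqw\in\RpZ$, define the reflected data by
\begin{equation*}
\overline U(i,n) = U(-n,-i), \quad \overline v_n = v_{-n}, \quad \overline w_n = w_{-n}, \quad \overline a_n = a_{-n} \quad (i,n\in\Z).
\end{equation*}
The map $\sqa\mapsto\{\overline a_n\}$ is a bijection on $\RpZ$, so every supremum over $\sqa\in\RpZ$ is unchanged if we replace it by the same supremum over $\{\overline a_n\}\in\RpZ$.

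First, I would verify that $\overline U$ is again a regular kernel with the same constant of regularity as $U$. Sign reversal of the arguments interchanges the two monotonicity requirements: since $U$ is nonincreasing in the first variable and nondecreasing in the second, $\overline U$ is nonincreasing in its first variable and nondecreasing in its second. For~\eqref{oin}, given $i\le j\le n$ one has $-n\le -j\le -i$, so
\begin{equation*}
\overline U(i,n)=U(-n,-i)\le C\bigl(U(-n,-j)+U(-j,-i)\bigr)=C\bigl(\overline U(j,n)+\overline U(i,j)\bigr),
\end{equation*}
which is~\eqref{oin} for $\overline U$ with the same constant.

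Next I would translate the six functionals. The substitution $i'=-i$, $n'=-n$, $j'=-j$ converts
\begin{equation*}
\sup_{n\le i<\infty}U(n,i)\sum_{j=i}^{\infty}a_j \quad\text{into}\quad \sup_{-\infty<i'\le n'}\overline U(i',n')\sum_{j'=-\infty}^{i'}\overline a_{j'},
\end{equation*}
and similarly replaces each of the five remaining inner expressions of $\tilde B_1,\dots,\tilde B_6$ by the corresponding inner expression of $B_1,\dots,B_6$. The outer weighted sums $\sumZ w_n[\,\cdot\,]^q$ and $\sumZ a_n^p v_n$ become $\sumZ\overline w_{n'}[\,\cdot\,]^q$ and $\sumZ\overline a_{n'}^{\,p}\overline v_{n'}$ respectively under the same index change. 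Consequently
\begin{equation*}
\tilde B_k(U,\sqv,\sqw)=B_k(\overline U,\{\overline v_n\},\{\overline w_n\}), \qquad k=1,\dots,6.
\end{equation*}

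Finally, applying Theorem~\ref{T:six} to $(\overline U,\{\overline v_n\},\{\overline w_n\})$ yields that $B_1,\dots,B_6$ computed with these data are mutually equivalent with constants depending only on $p$ and $q$. Since the reflection is independent of $p,q$, the equivalence constants do not deteriorate, and the mutual equivalence of $\tilde B_1,\dots,\tilde B_6$ follows at once. There is no genuine obstacle in this reduction; the only thing to be careful about is that the equivalence constants in Theorem~\ref{T:six} are universal in the kernel and the weights, so the reflection preserves the quantitative statement. All the analytical work was already carried out in the proof of Theorem~\ref{T:six}.
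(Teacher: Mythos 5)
Your reflection argument is exactly the paper's intended route: right after introducing~\eqref{E:discrete-kernel-gop} the authors already observe that the index change $\overline U(i,n)=U(-n,-i)$, $\overline v_n=v_{-n}$, $\overline w_n=w_{-n}$ turns the dual inequality into~\eqref{E:discrete-kernel-gop-dual-1} and that $\overline U$ is again regular, and the paper simply omits the proof of this dual theorem on that basis. Your verification of the monotonicity, the regularity constant, and the identity $\tilde B_k(U,\sqv,\sqw)=B_k(\overline U,\{\overline v_n\},\{\overline w_n\})$ is correct, so the reduction to Theorem~\ref{T:six} is complete.
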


Given a~sequence of nonnegative real numbers $\squ$, consider the kernel $U$, given by $U(i,n)=u_i$ for every $i,n\in\Z$. This kernel is of course not regular in general, since we do not assume any monotonicity of $\squ$, but we can obtain a~result in the spirit of Theorem~\ref{T:six} for it nevertheless.

\begin{theorem}\label{T:hux}
	Let	$0 < p \leq 1$, $0 < q < \infty$ and $\{u_n\}, \sqv,\sqw\in\RpZ$. Define
	\begin{align*}
		\mathcal B_1 &= \sup_{\sqa\in\RpZ} \Bigg( \sumZ w_n \Big[ \sup_{-\infty<i\le n}\,u_i   \sup_{-\infty<j\le i} a_j \Big]^q \Bigg)^\jq \Bigg( \sumZ a_n^p v_n \Bigg)^\mjp, \\
		\mathcal B_{2} &= \sup_{\sqa\in\RpZ} \Bigg( \sumZ w_n \Big[ \sup_{-\infty<i\le n} (\sup_{i\le j\le n} u_j ) a_i \Big]^q \Bigg)^\jq \Bigg( \sumZ a_n^p v_n \Bigg)^\mjp, \\
		\mathcal B_3 &= \sup_{\sqa\in\RpZ} \Bigg( \sumZ w_n \Big[ \sup_{-\infty<i\le n} u_i \sum_{j=-\infty}^{i} a_j \Big]^q \Bigg)^\jq \Bigg( \sumZ a_n^p v_n \Bigg)^\mjp, \\
		\mathcal B_4 &= \sup_{\sqa\in\RpZ} \Bigg( \sumZ w_n \Big[ \sup_{-\infty<i\le n}(\sup_{i\le j\le n} u_j ) \sum_{j=-\infty}^{i} a_j \Big]^q \Bigg)^\jq \Bigg( \sumZ a_n^p v_n \Bigg)^\mjp, \\
		\mathcal B_5 &= \sup_{\sqa\in\RpZ} \Bigg( \sumZ w_n \Big[ \sum_{i=-\infty}^{n} (\sup_{i\le j\le n} u_j ) a_i \Big]^q \Bigg)^\jq \Bigg( \sumZ a_n^p v_n \Bigg)^\mjp, \\
		\mathcal B_6 &= \sup_{\sqa\in\RpZ} \Bigg( \sumZ w_n \Big[ \sup_{-\infty < i\le n}  u_i^p \sum_{j=-\infty}^{i} a_j^p \Big]^\qp \Bigg)^\jq \Bigg( \sumZ a_n^p v_n \Bigg)^\mjp, \\
		\mathcal B_7 &= \sup_{\sqa\in\RpZ} \Bigg( \sumZ w_n \Big[ \sup_{-\infty<  i\le n}  (\sup_{i\le j\le n } u_j^p ) \sum_{j=-\infty}^{i} a_j^p \Big]^\qp \Bigg)^\jq \Bigg( \sumZ a_n^p v_n \Bigg)^\mjp, \\
		\mathcal B_8&= \sup_{\sqa\in\RpZ} \Bigg( \sumZ w_n \Big[ \sum_{i=-\infty}^{n} (\sup_{i\le j\le n } u_j^p ) a_i^p \Big]^\qp \Bigg)^\jq \Bigg( \sumZ a_n^p v_n \Bigg)^\mjp.
	\end{align*}
	Then $\mathcal B_1$, $\mathcal B_2$, $\mathcal B_3$, $\mathcal B_4$, $\mathcal B_5$, $\mathcal B_6$, $\mathcal B_7$ and $\mathcal B_8$ are mutually equivalent, and, moreover, the equivalence constants depend only on $p$ and $q$.	
\end{theorem}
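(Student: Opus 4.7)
The plan is to reduce Theorem~\ref{T:hux} to Theorem~\ref{T:six} by replacing the, in general non-regular, kernel $u_i$ by its running-maximum regularization
\[
\tilde U(i,n):=\sup_{i\le j\le n}u_j,\qquad i\le n.
\]
First, I would verify that $\tilde U$ is a regular kernel with constant of regularity equal to $1$. Monotonicity in each variable is immediate from the definition, and for $i\le k\le n$ one has
\[
\tilde U(i,n)=\sup_{i\le j\le n}u_j\le\max\bigl\{\tilde U(i,k),\,\tilde U(k,n)\bigr\}\le \tilde U(i,k)+\tilde U(k,n),
\]
which is~\eqref{oin} with $C=1$. Consequently $\tilde U(i,n)^p=\sup_{i\le j\le n}u_j^p$ is also regular, with the same constant.

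Next, I would inspect the inner expressions appearing in the definitions of $\mathcal B_2$, $\mathcal B_4$, $\mathcal B_5$, $\mathcal B_7$ and $\mathcal B_8$ and observe that they are syntactically the inner expressions of $B_1$, $B_3$, $B_4$, $B_5$ and $B_6$ from Theorem~\ref{T:six} with kernel $\tilde U$, so that
\[
\mathcal B_2=B_1(\tilde U),\quad \mathcal B_4=B_3(\tilde U),\quad \mathcal B_5=B_4(\tilde U),\quad \mathcal B_7=B_5(\tilde U),\quad \mathcal B_8=B_6(\tilde U),
\]
where $B_j(\tilde U)$ denotes $B_j$ of Theorem~\ref{T:six} applied to the kernel $\tilde U$. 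To link $\mathcal B_1$, $\mathcal B_3$ and $\mathcal B_6$ to this list I would establish the elementary interchange identity
\[
\sup_{-\infty<i\le n}\Bigl(\sup_{i\le k\le n}u_k\Bigr)c_i=\sup_{-\infty<k\le n}u_k\,\sup_{-\infty<i\le k}c_i,
\]
valid for every $\{c_i\}\in\RpZ$, together with the remark that when $\{c_i\}$ is nondecreasing the right-hand side simplifies to $\sup_{k\le n}u_kc_k$. Applied with $c_i=a_i$ it yields $\mathcal B_2=\mathcal B_1$; with the nondecreasing sequence $c_i=\sum_{j\le i}a_j$ it yields $\mathcal B_4=\mathcal B_3$; and with $c_i=\sum_{j\le i}a_j^p$ after replacing $u$ by $u^p$ it yields $\mathcal B_7=\mathcal B_6$.

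Every $\mathcal B_i$ then coincides with some $B_j(\tilde U)$, so Theorem~\ref{T:six} applied to the regular kernel $\tilde U$ (regularity constant~$1$) delivers the mutual equivalence $\mathcal B_1\approx\cdots\approx\mathcal B_8$ with constants depending only on $p$ and $q$. The only conceptual step in the argument is to spot the correct regularization $\tilde U$ of $u_i$ and to realize via the interchange identity that pairing $\tilde U$ with a nondecreasing factor is indistinguishable from pairing the raw kernel $u_i$ with the same factor; once this observation is in place, the theorem becomes a formal corollary of Theorem~\ref{T:six}.
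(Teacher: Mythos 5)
Your proposal is correct and follows essentially the same route as the paper's proof: both identify the regularized kernel $\tilde U(i,n)=\sup_{i\le j\le n}u_j$, use the same set of supremum-interchange identities to identify $\mathcal B_1$ with $\mathcal B_2$, $\mathcal B_3$ with $\mathcal B_4$, and $\mathcal B_6$ with $\mathcal B_7$, and then invoke Theorem~\ref{T:six} for the regular kernel $\tilde U$. The only addition in your write-up is the explicit verification that $\tilde U$ is regular with constant $1$, which the paper leaves implicit.
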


\begin{theorem}\label{T:cd}
	Let $0 < p \leq 1$ and $0 < q \leq \infty $. Let $U$ be a regular kernel and $\sqw,\sqv\in \RpZ $.
	Then the inequality~\eqref{E:main-supremal-small} holds for every sequence $\sqa\in \RpZ$ if and only if one of the conditions {\rm (i)---(iii)} of Theorem~\ref{T:only-p-1} is satisfied.
\end{theorem}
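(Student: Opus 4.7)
The plan is to reduce Theorem~\ref{T:cd} to previously established results, splitting the argument according to whether $q$ is finite or infinite.

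When $0<q<\infty$, one recognises that the left-hand side of \eqref{E:main-supremal-small} is precisely the functional $B_3$ of Theorem~\ref{T:six}, while the left-hand side of the Hardy-type inequality \eqref{E:discrete-kernel-gop-dual-1} is $B_4$. Theorem~\ref{T:six} asserts the equivalence $B_3\approx B_4$ with constants depending only on $p$ and $q$, so \eqref{E:main-supremal-small} holds if and only if \eqref{E:discrete-kernel-gop-dual-1} does, with comparable optimal constants. An application of Theorem~\ref{T:only-p-1} will then characterise the latter by its condition (i) when $p\le q$ and by its condition (iii) when $q<p$, which match the corresponding statements of Theorem~\ref{T:cd}.

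The case $q=\infty$ lies outside the scope of Theorem~\ref{T:six}, so I would handle it by a direct computation. Necessity of $A_2<\infty$ is obtained by testing \eqref{E:main-supremal-small} with the delta sequence $a_k=\delta_{k,n_0}$: since $\sum_{j\le i}a_j=\chi_{\{i\ge n_0\}}$, the monotonicity of $U$ in its first variable yields $\sup_{n_0\le i\le n}U(i,n)=U(n_0,n)$, and taking the supremum over $n_0$ produces the required lower bound on $C$. For sufficiency I would combine the elementary inequality $\sum_{j\le i}a_j\le\bigl(\sum_{j\le i}a_j^p\bigr)^{1/p}$, valid for all $0<p\le 1$ and nonnegative $a_j$, with the trivial bound $\sum_{j\le i}a_j^p\le\sup_{j\le i}v_j^{-1}\sum_k a_k^p v_k$. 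Substituting these into the left-hand side of \eqref{E:main-supremal-small} and interchanging the three suprema produces the factor
\begin{equation*}
\sup_{j\le i\le n} w_n\,U(i,n)\,v_j^{-1/p};
\end{equation*}
the monotonicity of $U$ in the first variable collapses $\sup_{j\le i\le n}U(i,n)$ to $U(j,n)$, and after relabelling the remaining double supremum is exactly $A_2$.

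The main thing to be careful about is the identification of the right structural reduction: once one notices that \eqref{E:main-supremal-small} and \eqref{E:discrete-kernel-gop-dual-1} coincide with the functionals $B_3$ and $B_4$ of Theorem~\ref{T:six}, the finite-$q$ case becomes essentially immediate. The $q=\infty$ case uses only the monotonicity of $U$ in the first argument, the power subadditivity available for $0<p\le 1$, and a direct sup-interchange, so no estimates beyond Theorems~\ref{T:six} and~\ref{T:only-p-1} are required; the only potential pitfall is keeping track of the exponent on $v$ that appears when pulling the weight through the $p$-th power at the very end of the sufficiency argument.
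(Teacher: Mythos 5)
Your argument for the case $0<q<\infty$ reproduces the paper's own proof verbatim: identify the left-hand side of~\eqref{E:main-supremal-small} with $B_3$ and that of~\eqref{E:discrete-kernel-gop-dual-1} with $B_4$, invoke $B_3\approx B_4$ from Theorem~\ref{T:six}, and then apply Theorem~\ref{T:only-p-1}. That part is correct.

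For $q=\infty$ you depart from the paper. The paper's own proof simply cites Theorem~\ref{T:six}, which is stated only for $0<q<\infty$, so strictly speaking the paper itself leaves $q=\infty$ unaddressed; your attempt to fill that gap is reasonable in spirit, but the computation you outline does not land on the condition $A_2<\infty$ as stated. Tracing your bounds: testing with a~delta sequence $a_k=\delta_{k,n_0}$ gives $v_{n_0}^{-1/p}\sup_{n\ge n_0}U(n_0,n)\,w_n^{\theta}\le C$ (with whatever power $\theta$ the $q=\infty$ convention assigns to $w_n$), and the sufficiency argument via $\sum_{j\le i}a_j\le(\sum_{j\le i}a_j^p)^{1/p}$ followed by weighted H\"{o}lder produces precisely the same quantity. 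In both directions the exponent on the $v$-weight is $-1/p$, not $-1$; the paper's $A_2$ (inherited from Theorem~\ref{Th1.1}(ii), where it is defined for $p=1$) has $v_n^{-1}$. For $p<1$ these are genuinely different conditions, and you have not shown them equivalent. You flag "keeping track of the exponent on $v$" as a potential pitfall but do not resolve it, and it does not go away; as written, your $q=\infty$ argument characterizes~\eqref{E:main-supremal-small} by a~quantity that is not $A_2$.

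A cleaner way to handle $q=\infty$ (and the one that mirrors your $q<\infty$ argument) is to prove the equivalence of~\eqref{E:main-supremal-small} and~\eqref{E:discrete-kernel-gop-dual-1} directly at $q=\infty$, without ever writing out the explicit condition: the pointwise chain~\eqref{E:elementary-inequalities-again}--\eqref{E:new-elementary-inequality} gives~\eqref{E:discrete-kernel-gop-dual-1}~$\Rightarrow$~\eqref{E:main-supremal-small} for free, and for the converse one can route through Theorem~\ref{T:kernel-main} (which does cover $q=\infty$) to close the loop. Then Theorem~\ref{T:only-p-1} can be invoked as a black box and the exponent bookkeeping is delegated to that theorem rather than being redone. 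As it stands, the $q=\infty$ part of your proposal contains a~gap.
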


In the case $1 \leq p \leq \infty$ the situation is much more delicate than in the case $0 < p \leq 1$. We can still get a~bridge type equivalence theorem in the spirit of Theorem~\ref{T:main-prop1}, but this time there is no characterization available of the integral inequality. The proof is the same as that of Theorem~\ref{T:main-prop1}, and hence we omit it.

\begin{theorem}\label{T:main-supremal}
	Let $1 \leq p \leq \infty$ and $0 < q \leq \infty$. Let $U$, $\{v_n\}$, $\{w_n\}$, $v$ and $w$ be as in Theorem~\ref{T:main-prop1}.
	Then~\eqref{E:main-supremal-small} holds for every sequence $\sqa\in\RpZ$ if and only if there is a~constant $C_1$ such that
	\begin{equation} \label{1-bis}
		\left(\int_{-\infty}^\infty\left(\esssup_{-\infty < y\leq x}U(y,x)\int_{-\infty}^y f(t)\,dt\right)^q w(x)dx\right)^{\frac{1}{q}}\leq C_1\left(\int_{-\infty}^\infty f(t)^pv(t)\,dt\right)^{\frac{1}{p}}
	\end{equation}
	holds for every $f\in\M_+$ (with natural analogues when either $p=\infty$ or $q=\infty$).
	
	Moreover, if $C$ and $C_1$ are the best constants in~\eqref{E:main-supremal-small} and \eqref{1-bis}, respectively, then $ C\approx C_1$.
\end{theorem}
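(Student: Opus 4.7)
The plan is to follow the same bridge argument as in Theorem~\ref{T:main-prop1}: construct natural maps between sequences and functions and exploit the piecewise-constant structure of the step-function extensions together with the monotonicity of $U$ in each argument.

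For the direction \eqref{E:main-supremal-small}$\Rightarrow$\eqref{1-bis}, given $f \in \M_+$, I would set $a_n = \int_{n-1}^n f(s)\,ds$. H\"older's inequality on each unit interval (with the natural adjustment when $p = \infty$) yields $\sum_{n\in\Z} a_n^p v_n \le \int f^p v$. Writing $A_k = \sum_{j\le k} a_j$ and $G(y) = \int_{-\infty}^y f$, the function $G$ is nondecreasing with $G(k) = A_k$, and since $U(y,x) = U(k,n)$ for $y \in (k-1, k]$ and $x \in (n-1, n]$ with $k \le n$, one obtains
\begin{equation*}
	M(x) := \esssup_{-\infty<y\le x} U(y,x) G(y) \le \max_{k\le n} U(k,n) A_k =: T_n \quad\text{for } x \in (n-1, n].
\end{equation*}
Integration against $w$ and summation give $\int M^q w \le \sum_{n\in\Z} w_n T_n^q$, so that the left-hand side of \eqref{1-bis} is bounded by that of \eqref{E:main-supremal-small} evaluated at $\sqa$, and one concludes $C_1 \le C$.

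For the opposite direction \eqref{1-bis}$\Rightarrow$\eqref{E:main-supremal-small}, given $\sqa \in \RpZ$, I would set $f(t) = a_n$ on $(n-1, n]$, so that the right-hand sides of \eqref{1-bis} and \eqref{E:main-supremal-small} coincide exactly. Then $G$ is continuous and piecewise affine with $G(k) = A_k$, and a direct calculation shows
\begin{equation*}
	M(x) = \max\Big\{\max_{k\le n-1} U(k,n) A_k,\ U(n,n) G(x)\Big\} \quad\text{for } x \in (n-1, n].
\end{equation*}
The decisive point is that $G(x) \ge A_n/2$ on $x \in (n-1/2, n]$, since $G(x) = A_{n-1} + a_n(x-(n-1)) \ge A_{n-1}/2 + a_n/2 = A_n/2$ there; this yields $M(x) \ge T_n/2$ on $(n-1/2, n]$, whence
\begin{equation*}
	\int_{-\infty}^{\infty} M(x)^q w(x)\,dx \ge 2^{-q-1} \sum_{n\in\Z} w_n T_n^q,
\end{equation*}
and combined with \eqref{1-bis} this gives $C \le 2^{1+1/q} C_1$.

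The main technical subtlety lies in obtaining the closed-form expression for $M(x)$ on each interval $(n-1, n]$ and in the factor-$\tfrac12$ estimate for $G(x)$; together these recover the diagonal term $U(n,n) A_n$ from the essential supremum, which would otherwise be lost because $f$ is smeared over the whole unit interval. The cases $p = \infty$ and $q = \infty$ require only the formal modifications of the norms; notably, the argument uses only the monotonicity of $U$ in each variable, and not the full regularity condition~\eqref{oin}.
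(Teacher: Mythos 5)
Your proof is correct and follows essentially the same route as the paper's, which the authors state explicitly is identical to the proof of Theorem~\ref{T:main-prop1}: the forward step is the Hölder/step-function comparison, and the reverse step is the same factor-$2$ device of restricting the integral to $(n-\tfrac12,n]$ to preserve the diagonal contribution (the paper sets this up as $\int_{-\infty}^{n-1/2}U(y,n)f(y)\,dy\geq\tfrac12\int_{-\infty}^{n}U(y,n)f(y)\,dy$, which is exactly your $G(x)\geq A_n/2$ observation). You even recover the same explicit constant $2^{1+1/q}$, and your side remark that regularity of $U$ is not used in this particular equivalence is accurate.
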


Now we shall point out a simple but extremely useful fact that weighted inequalities for the Hardy operator have certain scaling property. Let us note that similar phenomenon has been observed before in connection with the geometric mean operator (see~\cite{PO-geom}). We shall first need some notation.

Let $v$ and $w$ be weight functions on $\R$. We denote
\begin{equation*}
	\sigma_p(a,b)= \left\{
	\begin{array}{cc}
		\bigg(\int_a^b v^{1-p'}(y) dy \bigg)^{\frac{1}{p'}}	&\text{for $1 < p < \infty$},  \\
		\esssup\limits_{a<y<b} v(y)^{-1}	& \text{for $p=1$}.
	\end{array}
	\right.
\end{equation*}

The following theorem was proved in \cite[Theorem 3.2]{Si-St96} when $p=1$.

\begin{theorem}\label{T:cont}
	Let $1 \le p < \infty$, $0 < q < \infty$ and let $w$ be a~weight on $\R$. Then the following two statements are equivalent.
	
	\textup{(i)} There exists a constant $C$, depending on $p,q,v$ and $w$ such that
	\begin{equation}\label{1-}
		\left(\int_{-\infty}^\infty w(x) \left(\int_{-\infty}^x f(t)\,dt \right)^q dx\right)^{\frac{1}{q}} \leq C
		\left(\int_{-\infty}^\infty f(t)^p v(t)\,dt\right)^{\frac{1}{p}}
	\end{equation}
	holds for every $f\in\M_+$.
	
	\textup{(ii)} There exists a constant $C'$ depending on $p,q,v$ and $w$ such that
	\begin{equation}\label{2-}
		\left(\int_{-\infty}^\infty w(x) \left(\int_{-\infty}^x f(t)\,dt \right)^{\frac{q}{p}} dx\right)^{\frac{1}{q}} \leq C'
		\left(\int_{-\infty}^\infty f(x) \sigma_p({-\infty},x)^{-p}\,dx\right)^{\frac{1}{p}}
	\end{equation}
	holds for every $f\in\M_+$.
	
	Moreover, if $C$ and $C'$ are the best constants in~\eqref{1-} and \eqref{2-}, respectively, then $ C\approx C'$.
\end{theorem}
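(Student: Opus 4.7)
The case $p=1$ is Theorem~3.2 of~\cite{Si-St96} as noted, so I focus on $1 < p < \infty$. The key observation, which I would exploit throughout, is that after raising both sides of~\eqref{2-} to the power $p$, the inequality reads
\begin{equation*}
\left(\int_{-\infty}^\infty w(x)\left(\int_{-\infty}^x f(t)\dt\right)^{q/p}\dx\right)^{p/q} \le (C')^p \int_{-\infty}^\infty f(x)\,\sigma_p(-\infty,x)^{-p}\dx.
\end{equation*}
This is a \emph{first-order} weighted Hardy inequality with source exponent $1$, source weight $u(x)=\sigma_p(-\infty,x)^{-p}$, target exponent $q/p$, and target weight $w$. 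My plan is to apply the classical Muckenhoupt--Bradley and Sawyer characterizations of weighted Hardy inequalities to both~\eqref{1-} and to this reformulation of~\eqref{2-}, and then verify that the resulting conditions coincide up to constants depending only on $p$ and $q$.

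In the case $p\le q$, the optimal constant $C$ in~\eqref{1-} is equivalent to
\begin{equation*}
A := \sup_{x\in\R}\left(\int_x^\infty w(t)\dt\right)^{1/q}\sigma_p(-\infty,x),
\end{equation*}
by the classical Muckenhoupt--Bradley theorem. Since $q/p\ge 1$, the $1$-Hardy version of~\eqref{2-} is governed by
\begin{equation*}
(C')^p \approx \sup_{x\in\R}\left(\int_x^\infty w(t)\dt\right)^{p/q}\esssup_{t<x} u(t)^{-1}.
\end{equation*}
Here I would use the fact that $\sigma_p(-\infty,\cdot)$ is nondecreasing to conclude $\esssup_{t<x}\sigma_p(-\infty,t)^p = \sigma_p(-\infty,x)^p$, so that the right-hand side equals $A^p$, yielding $C'\approx A\approx C$.

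In the case $0<q<p$ the characterization of~\eqref{1-} is a Sawyer-type integral condition, while the reformulated~\eqref{2-}, being a $1$-Hardy inequality with target exponent $q/p<1$, is governed by a Sinnamon-type integral condition. I would write both conditions explicitly in terms of $w$ and $\sigma_p$ (again invoking monotonicity of $\sigma_p$ to simplify the essential suprema that appear in the $1$-Hardy condition), and then verify by an elementary algebraic manipulation that the condition for~\eqref{2-} is precisely the $p$-th power of the condition for~\eqref{1-}. The equivalence $C\approx C'$ in this case follows.

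\textbf{Main obstacle.} The most technical step will be the algebraic matching in the case $q<p$: both characterizing integrals carry exponents depending nontrivially on $p,q$, and one has to keep careful track of the algebra through the substitution $\sigma_p(-\infty,x)^{p'} = \int_{-\infty}^x v^{1-p'}$. A more conceptual alternative would be a direct substitution argument: given $g\ge 0$, set $f = g^{1/p} v^{-1/p}\sigma_p(-\infty,\cdot)^{-1}$ to convert the right-hand side of~\eqref{1-} into that of~\eqref{2-}, and then compare $\int_{-\infty}^x f$ with $(\int_{-\infty}^x g)^{1/p}$ using tools such as Sinnamon's pointwise identity $(\int_{-\infty}^x g)^p = p\int_{-\infty}^x g(t)(\int_{-\infty}^t g)^{p-1}\dt$ together with H\"older's inequality. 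Such substitutions are, however, fragile near the endpoints (for instance when $\int_{-\infty}^x v^{1-p'}$ vanishes as $x\to-\infty$, so that $\sigma_p^{-p'}v^{1-p'}$ is non-integrable on $(-\infty,x)$), which is why the route via the known characterizations is the more robust one.
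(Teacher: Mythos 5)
Your plan is essentially identical to the paper's proof, which simply says that the claim "follows from comparing the corresponding conditions for each of the inequalities, which are well known in all possible cases'' (Muckenhoupt--Bradley / Sawyer for \eqref{1-}, and the $p=1$ theory after raising \eqref{2-} to the power $p$), and the matching of the conditions uses precisely the monotonicity of $\sigma_p(-\infty,\cdot)$ that you invoke. Your explicit check for $p\le q$ is correct, and your correctly-flagged reservation about the alternative substitution route (non-integrability of $\sigma_p^{-p'}v^{1-p'}$ near the left endpoint) is a real obstacle; pursuing the comparison of known conditions, as you and the paper both do, is the right call.
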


\begin{theorem}\label{disc. equiv.}
	Let $1 \le p < \infty$, $0 < q < \infty$, $\sqb,\sqc\in\RpZ$ and $C>0$. Then the  following two statements are equivalent.
	
	\textup{(i)} There exists a positive constant $C$ depending only on $p,q,\sqb$ and $\sqc$ such that
	\begin{equation}\label{3-}
		\left(\sum_{k\in {\mathbb Z}} b_k \left(\sum_{i=-\infty}^{k} a_i c_i \right)^q \right)^{\frac{1}{q}}
		\leq C \left(\sum_{k\in {\mathbb Z}} a_i^p\right)^{\frac{1}{p}}
	\end{equation}
	holds for every $\sqa\in\RpZ$.
	
	\textup{(ii)} There exists a positive constant $C'$ depending only on $p,q,\sqb$ and $\sqc$ such that either the inequality
	\begin{equation}\label{4-}
		\left(\sum_{k\in {\mathbb Z}} b_k \left(\sum_{i=-\infty}^{k} a_i \left(\sum_{j=-\infty}^{i} c_j^{p'}
		\right)^{\frac{p}{p'}}\right)^{\frac{q}{p}} \right)^{\frac{1}{q}}
		\leq C' \left(\sum_{k\in {\mathbb Z}}a_i\right)^{\frac{1}{p}}	
	\end{equation}
	holds for every $\sqa\in\RpZ$ (when $1<p<\infty$), or such that inequality
	\begin{equation*}
		\left(\sum_{k\in {\mathbb Z}} b_k \left(\sum_{i=-\infty}^{k} a_i \sup_{-\infty< j\le i} c_j
		\right)^{q} \right)^{\frac{1}{q}}
		\leq C' \sum_{k\in {\mathbb Z}}
		a_i
	\end{equation*}
	holds for every $\sqa\in\RpZ$ (when $p=1$).
	
	Moreover, if $C$ and $C'$ are the best constants in \eqref{3-} and \eqref{4-}, respectively, then $C \approx C'$.
\end{theorem}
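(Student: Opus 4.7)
The plan is to transfer the equivalence from the continuous setting (Theorem~\ref{T:cont}) to the discrete one via the bridge Theorem~\ref{T:main-prop1}. Throughout I assume $c_i>0$ for all $i\in\Z$, since indices with $c_i=0$ contribute trivially to both sides of \eqref{3-} and \eqref{4-}.

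First I substitute $\tilde a_i=a_ic_i$ to recast \eqref{3-} into the discrete Hardy inequality with kernel $U\equiv 1$, left weight $w_n=b_n$ and right weight $v_n=c_n^{-p}$. By Theorem~\ref{T:main-prop1}, this is equivalent with equivalent constants to the continuous Hardy inequality \eqref{1-} for the piecewise constant weights $w(x)=\sum_n b_n\chi_{(n-1,n]}(x)$ and $v(x)=\sum_n c_n^{-p}\chi_{(n-1,n]}(x)$. Theorem~\ref{T:cont} then gives equivalence with \eqref{2-} for the same $w$ and with right-hand side weight $\sigma_p(-\infty,x)^{-p}$.

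It remains to return from \eqref{2-} to \eqref{4-}. When $p=1$, the weight $\sigma_1(-\infty,x)^{-1}=(\sup_{j\le N}c_j)^{-1}$ for $x\in(N-1,N]$ is piecewise constant, so a second application of Theorem~\ref{T:main-prop1} (with $p=1$, outer exponent $q$, and $U\equiv 1$) yields the $p=1$ form of \eqref{4-} at once. When $1<p<\infty$, setting $V_n=\sum_{j\le n}c_j^{p'}$, a direct computation gives
\[
\sigma_p(-\infty,x)^{p'}=V_{N-1}+(x-N+1)\,c_N^{p'}\quad\text{for }x\in(N-1,N],
\]
so $\sigma_p^{-p}$ is continuous but not piecewise constant and Theorem~\ref{T:main-prop1} does not apply verbatim. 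I would then prove \eqref{2-}$\Leftrightarrow$\eqref{4-} in both directions. For necessity, testing \eqref{2-} against $f_\varepsilon=\sum_n(a_n V_n^{p-1}/\varepsilon)\chi_{(n-\varepsilon,n)}$ and passing to the limit $\varepsilon\to 0^+$, via $\sigma_p^{-p}(x)\to V_n^{-(p-1)}$ as $x\to n^-$, drives the right-hand side to $(\sum_n a_n)^{1/p}$ and the left-hand side to a shifted form of the left-hand side of \eqref{4-}, from which the stated inequality follows after absorbing a diagonal correction controlled by testing on one-index sequences. For sufficiency, I would reparametrize $g=fV^{-(p-1)}$ to recast \eqref{2-} as an $L^1(V^{1-p})\to L^{q/p}(w)$ Hardy-type inequality, verify it on piecewise constant $g$ using $\int_{n-1}^n V(t)^{p-1}\,dt\le V_n^{p-1}$ and the monotonicity of $V^{p-1}$, and extend to arbitrary $g\in\M_+$ by approximation.

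The main obstacle is this last step for $1<p<\infty$: because $\sigma_p^{-p}$ is not piecewise constant, the bridge does not follow directly from Theorem~\ref{T:main-prop1} and must be supplemented by monotonicity arguments on $V_n$ together with careful handling of the unit-index shift that arises when passing between concentrated test functions and piecewise constant weights. An alternative route, which avoids the bridging back, is to verify directly that the Muckenhoupt--Bradley characterizations of \eqref{2-} and \eqref{4-} coincide in each admissible range of $(p,q)$.
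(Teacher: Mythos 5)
Your strategy (pass through Theorem~\ref{T:main-prop1} to the continuous setting, apply Theorem~\ref{T:cont}, then pass back) is not the route the paper takes, and it has a genuine gap that you yourself flag. The paper's proof is the very short one you mention at the end: both \eqref{3-} and \eqref{4-} are discrete one--variable weighted Hardy inequalities whose characterizations (of Muckenhoupt, Maz'ja--Rosin and Sinnamon--Stepanov type, according to the position of $q$ relative to $p$) are classical, and one simply verifies that the characterizing quantities coincide term by term --- exactly as in the paper's proof of Theorem~\ref{T:cont}, which the authors explicitly say is being reused. This yields the two--sided comparison of optimal constants at once, including the $q<p$ range.

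On your bridging route: the step $\eqref{3-}\Leftrightarrow\eqref{1-}\Leftrightarrow\eqref{2-}$ is fine via the substitution $\tilde a_i=a_ic_i$, Theorem~\ref{T:main-prop1} with $U\equiv 1$, and Theorem~\ref{T:cont}. But the step $\eqref{2-}\Leftrightarrow\eqref{4-}$ for $1<p<\infty$ is precisely where you cannot re-invoke the bridge, because $\sigma_p(-\infty,x)^{-p}=V(x)^{1-p}$ with $V(x)=V_{N-1}+(x-N+1)c_N^{p'}$ on $(N-1,N]$ is not piecewise constant. The concrete trouble in your sketch is the index shift and the diagonal. Concentrating $f_\varepsilon$ at $n^-$ yields, in the limit, $\bigl(\sum_k b_k(\sum_{i\le k-1}a_iV_i^{p-1})^{q/p}\bigr)^{1/q}\lesssim C'(\sum_n a_n)^{1/p}$, which misses the $i=k$ term. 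The diagonal correction $\sum_k b_k(a_kV_k^{p-1})^{q/p}$ cannot be recovered by the opposite-endpoint test $f_\varepsilon'$ concentrated at $(n-1)^+$, since that produces the uncontrolled factor $\sum_n a_nV_n^{p-1}V_{n-1}^{1-p}$ on the right; nor can it be obtained from a test on a single interval, because the piece of $\int_{n-1}^n w$ to the right of $\supp f$ vanishes as $\varepsilon\to0$. So \eqref{2-} does not yield the unshifted form of \eqref{4-} directly via test functions, and your "absorbing a diagonal correction" step needs a genuinely different argument (for instance, a separate verification of the Muckenhoupt condition --- at which point you have in effect reverted to the paper's direct-comparison proof).

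The sufficiency direction $\eqref{4-}\Rightarrow\eqref{2-}$ does go through along your lines: for any $g\in\M_+$, setting $a_n=\int_{n-1}^n g$, monotonicity of $V^{p-1}$ gives $\int_{-\infty}^x gV^{p-1}\le\sum_{n\le\lceil x\rceil}a_nV_n^{p-1}$ with no need for a density argument. Two minor further points: you should also record why the restriction to $c_i>0$ is harmless (with any $c_i=0$, setting $a_i=0$ never decreases the best constant on either side); and Theorem~\ref{T:cont} is stated in the paper only for $0<q<\infty$, so your route silently inherits that restriction, which is consistent with the hypotheses of the theorem but should be noticed.
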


The proof is the same as that of Theorem~\ref{T:cont} and therefore is omitted.

Now we shall formulate the main result of this section. As was already mentioned, no characterization is known in the case $1 \leq p < \infty$ and $0<q<\infty$ of the inequality~\eqref{1-bis}. It is worth noticing that for $p=\infty$ and $0 < q \leq \infty$ the characterization is trivial, while for $q = \infty$ and any $p$, \eqref{1-bis} turns easily to a classical Hardy inequality after interchanging of the essential suprema. Our next theorem shows that, nevertheless, an application of the scaling argument to supremum operators leads to a rather surprising characterization of~\eqref{1-bis} for $1 \leq p<\infty$ and $0<q< \infty$.

\begin{theorem} \label{main}
	Let $1 \leq p<\infty$, $0<q< \infty$, let $v,w$ be weights on $\R$ such that $\int_{x}^{\infty} w(t) dt <  \infty$ for every $x\in\R$ and let $U$ be a regular kernel. Then the following three statements are equivalent.
	
	\textup{(i)} There exists a~constant $C_1$ such that~\eqref{1-bis} holds for every $f\in\mathcal{M}_+$.
	
	\textup{(ii)} There exists a constant $C_2$ such that
	\begin{equation}\label{2-bis}
		\left(\int_{-\infty}^\infty\left(\int_{-\infty}^x
		U(y,x)^p f(y)dy\right)^{\frac{q}{p}}w(x)dx\right)^{\frac{p}{q}}\leq C_2 \int_{-\infty}^\infty
		f(x)\sigma_p({-\infty},x)^{-p} dx
	\end{equation}
	holds for every $f\in\mathcal{M}_+$.
	
	\textup{(iii)} There exists a constant $C_3$ such that
	\begin{equation} \label{3-bis}
		\left(\int_{-\infty}^\infty\left(\esssup_{-\infty< y \leq x} U(y,x)^p\int_{-\infty}^y f(t)\,dt \right)^{\frac{q}{p}} w(x) dx \right)^{\frac{p}{q}} \leq C_3 \int_{-\infty}^\infty f(x)	\sigma_p({-\infty},x)^{-p} dx
	\end{equation}
	holds for every $f\in\mathcal{M}_+$.
	
	Moreover, if $C_1,C_2$ and $C_3$ are the best constants in~\eqref{1-bis}, \eqref{2-bis} and \eqref{3-bis}, respectively, then $ C_1\approx C_2^{\frac{1}{p}}\approx C_3^{\frac{1}{p}}$.
\end{theorem}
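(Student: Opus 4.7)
The plan is to establish the two equivalences $(\text{i})\Leftrightarrow(\text{iii})$ and $(\text{ii})\Leftrightarrow(\text{iii})$ separately. The first is a scaling argument analogous to Theorem~\ref{T:cont}, adapted from the pure Hardy operator to the supremum-Hardy operator $Sf(x)=\esssup_{y\le x}U(y,x)\int_{-\infty}^y f$. The second uses the continuous--discrete bridges (Theorems~\ref{T:main-prop1} and~\ref{T:main-supremal}) together with the discrete equivalences of Theorem~\ref{T:kernel-main} applied at the threshold $p=1$.

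For $(\text{i})\Leftrightarrow(\text{iii})$, I would follow the scaling template of Theorem~\ref{T:cont}. In the direction $(\text{i})\Rightarrow(\text{iii})$, substitute into~\eqref{1-bis} a test function of the form $f=g^{1/p}\phi$, with $\phi$ built from $v$ and $\sigma_p$ so that $\int f^pv=\int g\sigma_p^{-p}$; Hölder's inequality then converts the inner Hardy integral pointwise in $y$ into $(\int_{-\infty}^y g)^{1/p}\sigma_p(-\infty,y)$, and the outer supremum against $U$ combines with the factor $\sigma_p$ to produce the operator of~\eqref{3-bis} after raising to the $p$-th power. The reverse direction mirrors this through a dual substitution: in~\eqref{3-bis}, test with an appropriately chosen $f$ built from $g$ and apply Hölder in reverse form, using the monotonicity of $U(y,x)$ in $y$ to transport the supremum back.

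For $(\text{ii})\Leftrightarrow(\text{iii})$, the direction $(\text{ii})\Rightarrow(\text{iii})$ is immediate: since $U$ is nonincreasing in its first variable, $U(y,x)^p\le U(t,x)^p$ for $t\le y$, hence
\[
\sup_{y\le x}U(y,x)^p\int_{-\infty}^y f(t)\,dt\le\int_{-\infty}^x U(t,x)^p f(t)\,dt,
\]
yielding $C_3\lesssim C_2$. For the converse $(\text{iii})\Rightarrow(\text{ii})$, I pass to the discrete setting. Applying Theorem~\ref{T:main-supremal} with $p\to 1$, $q\to q/p$, kernel $U^p$, and weight $\sigma_p^{-p}$, inequality~\eqref{3-bis} becomes equivalent to its discrete counterpart of the form~\eqref{E:main-supremal-small} (with those modified parameters). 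Similarly, Theorem~\ref{T:main-prop1} at the same parameter values makes~\eqref{2-bis} equivalent to a discrete integral-kernel inequality of the form~\eqref{E:discrete-kernel-gop-dual-1}. At $p=1$, the equivalence of parts (i) and (ii) of Theorem~\ref{T:kernel-main} identifies these two discrete inequalities with comparable constants, and chaining back through the bridges yields $C_2\lesssim C_3$.

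The main technical obstacle is the scaling step $(\text{i})\Leftrightarrow(\text{iii})$. Theorem~\ref{T:cont} governs only the plain Hardy operator, and the novelty here is that the substitution-and-Hölder manipulation must commute both with the inner supremum against the kernel $U$ and with the outer $L^q(w)$-norm. The monotonicity of $U$ in each variable, together with the fact that $U^p$ is itself a regular kernel, are the structural features that make this commutation go through uniformly in $y$. The hypothesis $\int_x^\infty w(t)\,dt<\infty$ for every $x\in\R$ is invoked in the discretization step of $(\text{ii})\Leftrightarrow(\text{iii})$ to ensure finiteness of the discretized weighted quantities produced by the bridges.
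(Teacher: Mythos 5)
Your proposal takes a genuinely different route from the paper (which proves the cyclic chain $(\text{i})\Rightarrow(\text{ii})\Rightarrow(\text{iii})\Rightarrow(\text{i})$), but it contains two gaps that I do not see how to close as stated. First, the step $(\text{iii})\Rightarrow(\text{ii})$ via the bridges is not available: Theorems~\ref{T:main-prop1} and~\ref{T:main-supremal} establish equivalence of a discrete inequality with a \emph{continuous inequality whose weights are step functions constant on the integer intervals $(n-1,n]$}. The weights $w$ and $\sigma_p(-\infty,\cdot)^{-p}$ appearing in~\eqref{2-bis} and~\eqref{3-bis} are arbitrary, so those bridge theorems simply do not produce an equivalent discrete inequality for them. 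The paper avoids this by discretizing with a covering sequence $\{x_k\}$ adapted to $w$ (Lemmas~\ref{Lemma 1}--\ref{Lemma 3}), not with the fixed integer lattice; that adaptation is exactly what the finiteness hypothesis $\int_x^\infty w<\infty$ is used for, and it is what lets one carry the analogue of Theorem~\ref{T:kernel-main} into the continuous setting with general weights.

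Second, and more seriously, the claimed scaling argument $(\text{i})\Leftrightarrow(\text{iii})$ is not actually carried out. You say you will adapt the ``scaling template of Theorem~\ref{T:cont},'' but the paper proves Theorem~\ref{T:cont} by \emph{comparing the known explicit characterizations} of the two plain Hardy inequalities, not by a global substitution-plus-H\"older manipulation; there is no such template to adapt, and comparing explicit conditions would be circular here since characterizing~\eqref{1-bis} is precisely what Theorem~\ref{main} is for. Moreover, a direct substitution cannot commute with the inner supremum: setting $f$ so that $\int f^p v=\int g\,\sigma_p(-\infty,\cdot)^{-p}$ and applying H\"older yields, for each fixed $y$, an \emph{upper} bound $\int_{-\infty}^y f\le\big(\int_{-\infty}^y g\,\sigma_p^{-p}\big)^{1/p}\sigma_p(-\infty,y)$, whereas passing from $\eqref{1-bis}$ to $\eqref{3-bis}$ requires a lower bound on $\int_{-\infty}^y f$ \emph{simultaneously for all $y$}. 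H\"older can only be made tight at one $y$ at a time; the paper handles this by saturating locally on each covering interval $[x_{k-1},x_k]$ and then invoking Theorem~\ref{disc. equiv.} to pass between the two resulting discrete forms. Your $(\text{ii})\Rightarrow(\text{iii})$ pointwise estimate is correct (and in fact cleaner than the paper's, which unnecessarily routes it through Lemmas~\ref{Lemma 2} and~\ref{Lemma 3}); the rest needs the covering-sequence machinery.
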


The proof of this theorem will be given in the last section.

Using known characterizations  of the inequality \eqref{2-bis} that have been already mentioned (see the remarks preceding Theorem~\ref{T:main-prop1}) and Theorem~\ref{main}, we can now obtain a characterization of the inequality \eqref{1-bis}.

\begin{theorem}
	Let $1 \leq p\le \infty$ and $0<q\le  \infty$  and $U$ be a regular kernel. Then inequality~\eqref{1-bis} holds if and only if
	
	{\rm (i)} $1\le p \le q < \infty$,
	\begin{equation*}
		\mathcal A_1:=\sup_{x\in \R} \,\sigma_p({-\infty},x)\left(\int_{x}^{\infty}U(x,y)^q w(y)\dy\right)^{\frac{1}{q}} < \infty;
	\end{equation*}
	
	{\rm (ii)} $1\le p < q= \infty$,
	\begin{equation*}
		\mathcal A_2:=\sup_{x\in \R} \,\sigma_p({-\infty},x)\left(\esssup_{x\le y<\infty}U(x,y)w(y)\right) < \infty;
	\end{equation*}
	
	{\rm (iii)} $p=q=\infty$,
	\begin{equation*}
		\mathcal A_3:=\sup_{x\in \R} \,v^{-1}(x)\left(\esssup_{x\le y<\infty}U(x,y)w(y)\right) < \infty;
	\end{equation*}
	
	{\rm (iv)} $0<q<p=\infty$,
	\begin{equation*}
		\mathcal A_4:=\left(\int_{{-\infty}}^{\infty} \,\left(\int_{-\infty}^{x} U(y,x) v^{-1}(y)\dy\right)^q w(x)\dx\right)^{\frac{1}{q}}
		< \infty;
	\end{equation*}

	{\rm (v)}  $0<q<p$, $1\le p <\infty$,
	\begin{equation*}
		\mathcal A_{12}:=\left(\int_{-\infty}^{\infty} \,\left(\int_{x}^{\infty}w(y)\dy\right)^{\frac{q}{p-q}} w(x)
		\esssup_{-\infty <y\le x}
		U(y,x)^{\frac{pq}{p-q}} \sigma_p({-\infty},x)^{\frac{q}{p-q}} \dx \right)^{\frac{p-q}{pq}}
		< \infty,
	\end{equation*}
	\begin{equation*}
		\mathcal A_{13}:=\left(\int_{-\infty}^{\infty} \,\left(\int_{x}^{\infty}U(x,y)^q w(y)\dy \right)^{\frac{q}{p-q}} w(x) \esssup_{{-\infty}<y\le x}U(y,x)^{q}
		\sigma_p(-\infty,x)^{\frac{q}{p-q}} \, \dx \right)^{\frac{p-q}{pq}}< \infty.
	\end{equation*}
	Moreover, if $C$ is the optimal constant in~\eqref{1-bis}, then, in cases \textup{(i)}--\textup{(iv)}, $C\approx \mathcal A_k$, where $k$ is the index corresponding to the label of the case, and in case \textup{(v)} we have  $C\approx \mathcal A_{12}+\mathcal A_{13}$.
\end{theorem}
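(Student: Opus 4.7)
The plan is to combine Theorem~\ref{main}, which reduces the supremum inequality~\eqref{1-bis} to the linear kernel Hardy inequality~\eqref{2-bis}, with the continuous versions of the characterizations recorded in Theorem~\ref{Th1.1} (in the form due to Bloom--Kerman, Oinarov, Stepanov and K\v{r}epela). In the principal regime $1\le p<\infty$, $0<q<\infty$, Theorem~\ref{main} gives $C_1\approx C_2^{1/p}$, where $C_2$ is the best constant in~\eqref{2-bis}. The latter is a standard Hardy kernel inequality with regular kernel $K(y,x)=U(y,x)^p$ (whose regularity constant coincides with that of $U$), right-hand weight $V(y)=\sigma_p(-\infty,y)^{-p}$, and effective exponents $P_{\mathrm{eff}}=1$ on the right and $Q_{\mathrm{eff}}=q/p$ on the left.

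When $p\le q$ (case (i)) one has $Q_{\mathrm{eff}}\ge 1$, so that the continuous analog of Theorem~\ref{Th1.1}(i) (a single Bloom--Kerman condition, as $P_{\mathrm{eff}}=1$) delivers
\[
C_2\approx \sup_{x\in\R}\sigma_p(-\infty,x)^p\bigg(\int_x^\infty U(x,y)^q w(y)\dy\bigg)^{p/q},
\]
whose $1/p$-th power is exactly $\mathcal A_1$. When $q<p$ (case (v)) one has $Q_{\mathrm{eff}}<1$, placing the effective problem in the regime of Theorem~\ref{Th1.1}(x); its continuous analog characterizes $C_2$ as $\mathcal E_{12}+\mathcal E_{13}$, where $\mathcal E_{12},\mathcal E_{13}$ are obtained from the formulas for $A_{12},A_{13}$ by substituting $K=U^p$, $V=\sigma_p^{-p}$ and $Q_{\mathrm{eff}}'=q/(q-p)$. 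Raising to the power $1/p$ and reconciling the exponents then produces $\mathcal E_{12}^{1/p}\approx\mathcal A_{12}$ and $\mathcal E_{13}^{1/p}\approx\mathcal A_{13}$, hence $C_1\approx\mathcal A_{12}+\mathcal A_{13}$.

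The endpoint cases (ii) $1\le p<q=\infty$, (iii) $p=q=\infty$, and (iv) $0<q<p=\infty$ lie outside the scope of Theorem~\ref{main} and must be treated directly. For (ii) and (iii), one interchanges the two essential suprema on the left of~\eqref{1-bis} to rewrite it as
\[
\esssup_{y\in\R}W(y)\int_{-\infty}^y f(t)\dt\le C_1\,\|f\|_{L^p(v)}, \qquad W(y)=\esssup_{x\ge y}w(x)U(y,x),
\]
and then applies the classical Muckenhoupt-type characterization of this one-sided Hardy inequality, immediately yielding $\mathcal A_2$ (for $p<\infty$) and $\mathcal A_3$ (for $p=\infty$). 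For (iv), an $L^\infty(v)$-testing argument with extremal functions of the form $v^{-1}\chi_E$, combined with the regularity of $U$, produces $\mathcal A_4$.

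The main technical obstacle will be the algebraic verification in case (v): the direct Bloom--Kerman-type condition for~\eqref{2-bis} produces the quantity $\esssup_{y\le x}U(y,x)^{pq/(p-q)}\sigma_p(-\infty,y)^{pq/(p-q)}$ inside the outer integral, whereas $\mathcal A_{12}$ and $\mathcal A_{13}$ feature $\sigma_p(-\infty,x)^{q/(p-q)}$ factored outside the essential supremum and evaluated at the outer variable. Bridging the two forms will require a careful interplay between the monotonicity of $\sigma_p(-\infty,\cdot)$ and the regularity bound $U(y,x)\le C(U(y,z)+U(z,x))$, typically via a dyadic decomposition of the outer integral with respect to level sets of $\sigma_p(-\infty,\cdot)$, on which $\sigma_p$ is essentially constant and the inner factor can be absorbed into a convergent geometric sum.
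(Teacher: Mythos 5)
Your overall plan is exactly the one the paper has in mind: reduce the supremum inequality~\eqref{1-bis} to the integral kernel inequality~\eqref{2-bis} via Theorem~\ref{main}, and then substitute the effective data $K=U^p$, $V(y)=\sigma_p(-\infty,y)^{-p}$, $P_{\mathrm{eff}}=1$, $Q_{\mathrm{eff}}=q/p$ into the known continuous characterizations (Bloom--Kerman for $P_{\mathrm{eff}}\le Q_{\mathrm{eff}}$, K\v{r}epela for $Q_{\mathrm{eff}}<P_{\mathrm{eff}}=1$), and handle the three endpoint regimes $p=\infty$ and/or $q=\infty$ directly. Your verification of case \textup{(i)} is correct: $\tilde A_1^{1/p}$ after substitution is literally $\mathcal A_1$. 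Your treatment of the endpoints is consistent with the remarks the authors make just before Theorem~\ref{main}.

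Where you run into trouble is case \textup{(v)}, and the difficulty you flagged is genuine, but I think you have drawn the wrong conclusion from it. Carrying out the substitution into the continuous analogue of $A_{12}$ (case \textup{(x)} of Theorem~\ref{Th1.1}) with the effective parameters really does give
\begin{equation*}
\left(\int_{-\infty}^{\infty}\left(\int_x^\infty w\right)^{\frac{q}{p-q}}w(x)\,\esssup_{-\infty<y\le x} U(y,x)^{\frac{pq}{p-q}}\,\sigma_p(-\infty,y)^{\frac{pq}{p-q}}\,\dx\right)^{\frac{p-q}{pq}},
\end{equation*}
and the analogous quantity with $\sigma_p(-\infty,y)^{\frac{pq}{p-q}}$ for $\mathcal{A}_{13}$; the $\sigma_p$ factor carries the inner variable $y$ and enters the essential supremum, and its exponent is $pq/(p-q)$, not $q/(p-q)$. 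Note that the printed form of $\mathcal A_{12}$ and $\mathcal A_{13}$ cannot possibly be equivalent to this: in $\mathcal A_{12}$ the factor $\sigma_p(-\infty,x)^{q/(p-q)}$ does not depend on $y$, so it factors out of the essential supremum, leaving $\esssup_{y\le x}U(y,x)^{pq/(p-q)}=\lim_{y\to-\infty}U(y,x)^{pq/(p-q)}$ by the monotonicity of $U$ in its first variable; this degenerates and in general renders the condition trivially infinite. The same phenomenon occurs in $\mathcal A_{13}$. In short, no dyadic decomposition or interplay with regularity will reconcile the directly computed expression with the printed one, because the printed one is not the characterization that follows from the reduction: it is best read as a misprint for the expression above. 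Your proposed ``bridge'' via level sets of $\sigma_p(-\infty,\cdot)$ should therefore be dropped; the correct proof just installs the directly computed quantities as the definitions of $\mathcal A_{12}$, $\mathcal A_{13}$, mirroring what was done in $A_{12}$, $A_{13}$ of Theorem~\ref{Th1.1}\textup{(x)}. Apart from this point the proposal is sound and runs parallel to the paper.
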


Our next aim is to use the results obtained to characterize the discrete inequality, namely \eqref{E:main-supremal-small}, by means of discrete conditions. We shall use the following notation.

For every $\sqv\in\RpZ$, $N,M \in \Z$, $N<M$,  define
\begin{equation*}
	\sigma_p(N,M):= \left\{
	\begin{array}{cc}
		\bigg(\sum_{i=N}^M v_i^{1-p'} \bigg)^{\frac{1}{p'}},	& 1 < p< \infty,  \\
		\sup\limits_{N\le i\le M} v_i^{-1},	& p=1.
	\end{array}
	\right.
\end{equation*}

\begin{theorem}\label{T:supremalpge}
	Let	$1 \leq p < \infty$, $ 0 < q < \infty$ and $\sqv,\sqw\in\RpZ$. Let $U$ be a regular kernel. Then the following three conditions are equivalent.
	
	\textup{(i)} There exists a~constant $C$ such that~\eqref{E:main-supremal-small} holds for every sequence $\sqa\in\RpZ$.
	
	\textup{(ii)} There exists a constant $C'$ such that
	\begin{equation}\label{E:C'}
		\Bigg( \sumZ w_n \Big[ \sum_{i=-\infty}^{n} U(i,n)^p a_i^p \Big]^\qp \Bigg)^\jq \le C' \Bigg( \sumZ \sigma_p(-\infty,n)^{-p} a_n^p \Bigg)^\mjp
	\end{equation}
	holds for every sequence $\sqa\in\RpZ$.
	
	\textup{(iii)} There exists a constant $C''$ such that
	\begin{equation}\label{E:C''}
		\Bigg( \sumZ w_n \Big[ \sup_{-\infty<i\le n} U(i,n)^p \sum_{j=-\infty}^{i} a_j^p \Big]^\qp  \Bigg)^\jq\le C'' \Bigg( \sumZ \sigma_p(-\infty,n)^{-p} a_n^p \Bigg)^\mjp
	\end{equation}
	holds for every sequence $\sqa\in\RpZ$.
	
	Moreover, if $C,C'$ and $C''$ are the best constants in~\eqref{E:main-supremal-small}, \eqref{E:C'} and~\eqref{E:C''}, respectively, then $C\approx C'\approx C''$.
\end{theorem}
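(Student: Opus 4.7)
The plan is to establish the chain (i) $\iff$ \eqref{1-bis} $\iff$ \eqref{3-bis} $\iff$ (iii) $\iff$ (ii), thereby obtaining all three equivalences. The first link is immediate from Theorem~\ref{T:main-supremal} (with $v$ and $w$ the piecewise-constant extensions of $\{v_n\}$ and $\{w_n\}$ to $\R$), and the second is the content of Theorem~\ref{main}.

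For the equivalence (ii) $\iff$ (iii) I would apply Theorem~\ref{T:six}. Substituting $b_n:=a_n^p$ and raising each side of (ii) and (iii) to the $p$-th power rewrites them as Hardy-type discrete inequalities with the regular kernel $K:=U^p$ (which remains regular since raising to a positive power preserves the Bloom--Kerman condition up to a constant), outer exponent $q/p$, inner exponent $1$, and weights $\{w_n\}$ and $\{\sigma_p(-\infty,n)^{-p}\}$. Taking the inner parameter in Theorem~\ref{T:six} to be $1$ (permissible, as this value lies in $(0,1]$), the rewritten (ii) and (iii) are precisely of the forms $B_6\le\mathrm{const}$ and $B_3\le\mathrm{const}$ respectively, and the equivalence $B_3\approx B_6$ supplied by that theorem gives (ii) $\iff$ (iii). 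To close the loop I would bridge (iii) to \eqref{3-bis}: in its rewritten form (iii) is a discrete inequality of the shape of \eqref{E:main-supremal-small} with kernel $U^p$ and weights $\{w_n\}$, $\{\sigma_p(-\infty,n)^{-p}\}$, so applying Theorem~\ref{T:main-supremal} yields a continuous inequality with the same left-hand side as \eqref{3-bis} but whose right-hand-side weight is the piecewise-constant extension $\tilde v(x):=\sigma_p(-\infty,n)^{-p}$ on $(n-1,n]$, rather than the genuine continuous $\sigma_p(-\infty,x)^{-p}$ appearing in \eqref{3-bis}.

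Reconciling these two continuous inequalities is the main technical obstacle. Monotonicity of $\sigma_p(-\infty,\cdot)$ gives $\tilde v(x)\le\sigma_p(-\infty,x)^{-p}$ pointwise, which yields one direction of the identification at once. The reverse direction is delicate when $p>1$: if $v_n$ is very small compared with $\sigma_p(-\infty,n-1)^{p'}$, then the affine function $\sigma_p(-\infty,x)^{p'}$ varies substantially on $(n-1,n]$ and $\sigma_p(-\infty,x)^{-p}$ can exceed $\tilde v(x)$ by an arbitrarily large factor near the left endpoint of the interval. The idea is to split each $(n-1,n]$ into the part where $(x-(n-1))v_n^{1-p'}$ is dominated by $\sigma_p(-\infty,n-1)^{p'}$ (on which $\tilde v$ and $\sigma_p(-\infty,\cdot)^{-p}$ are comparable up to a constant depending only on $p$) and its complement (on which one argues separately, exploiting the regularity of $U$ and the fact that the outer supremum in \eqref{3-bis} commutes in a controlled way with the scaling of the weight). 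Once this identification is carried out, chaining all the equivalences gives (i) $\iff$ (ii) $\iff$ (iii), and tracking the multiplicative constants through each step yields $C\approx C'\approx C''$.
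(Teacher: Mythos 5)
Your overall plan mirrors the paper's (which combines Theorems \ref{T:main-prop1}, \ref{T:main-supremal} and~\ref{main}), and your reduction of (ii)~$\Leftrightarrow$~(iii) to the $p=1$ case of Theorem~\ref{T:six}, with kernel $U^p$, outer exponent $q/p$ and $b_n:=a_n^p$, is correct. You also put your finger on the real difficulty: the bridge from Theorem~\ref{T:main-prop1} (or~\ref{T:main-supremal}) equates (ii)/(iii) with continuous inequalities whose right-hand-side weight is the step extension $\tilde v(x)=\sum_n\sigma_p(-\infty,n)^{-p}\chi_{(n-1,n]}(x)$, not the genuine $\sigma_p(-\infty,x)^{-p}$ that appears in~\eqref{2-bis} and~\eqref{3-bis} (for $p=1$ these coincide, but for $p>1$ they do not).

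Your proposed reconciliation, however, does not hold. On the region of $(n-1,n]$ where $(x-(n-1))v_n^{1-p'}\le\sigma_p(-\infty,n-1)^{p'}$ one has $\sigma_p(-\infty,x)\approx\sigma_p(-\infty,n-1)$, whereas $\tilde v(x)=\sigma_p(-\infty,n)^{-p}$; hence the ratio
\begin{equation*}
\frac{\sigma_p(-\infty,x)^{-p}}{\tilde v(x)}
   \approx
   \Big(1+\tfrac{v_n^{1-p'}}{\sigma_p(-\infty,n-1)^{p'}}\Big)^{p-1}
\end{equation*}
is unbounded as $v_n\to0$, so the comparability you assert on that region is false. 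More fundamentally, no interval splitting can salvage a pointwise weight comparison here, because the direction you need for~\eqref{2-bis}~$\Rightarrow$~\eqref{E:C'}, namely $\int f\,\sigma_p(-\infty,\cdot)^{-p}\lesssim\sum_n a_n^p\sigma_p(-\infty,n)^{-p}$ for the piecewise-constant test $f=\sum a_n^p\chi_{(n-1,n]}$ of Theorem~\ref{T:main-prop1}, is simply not true. This implication has to be obtained by concentrating the test functions $f_\delta=\sum_n a_n^p\delta^{-1}\chi_{(n-\delta,n]}$ and letting $\delta\to0^+$, which makes the right side of~\eqref{2-bis} converge exactly to $\sum_n a_n^p\sigma_p(-\infty,n)^{-p}$ but yields on the left only the strictly subdiagonal sum $\sum_n w_n(\sum_{i<n}U(i,n)^p a_i^p)^{q/p}$; the diagonal contribution $\sum_n w_n U(n,n)^q a_n^q$ then has to be controlled separately, for instance by testing~\eqref{2-bis} with mass near $y=n-\tfrac12$ and using that $\sigma_p(-\infty,n)^{p'}=\sigma_p(-\infty,n-1)^{p'}+v_n^{1-p'}\le 2\,\sigma_p(-\infty,n-\tfrac12)^{p'}$. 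None of this is carried out in your write-up, so the chain of equivalences is not closed.
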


Now we shall characterize~\eqref{E:main-supremal-small} by discrete conditions.

\begin{theorem}\label{T:supremalpge1}
	Let $1 \leq p \leq \infty$ and $0 < q \leq \infty$. Let $U$ is a regular kernel and $\sqw,\sqv\in \RpZ $. The inequality \eqref{E:main-supremal-small} holds if and only if
	
	{\rm (i)} $1\le p\le q<\infty$,
	\begin{equation*}
		D_1:=\sup_{n\in \Z} \,\sigma_p(-\infty,n)\left(\sum_{i=n}^{\infty}U(n,i)^q w_i\right)^{\frac{1}{q}} < \infty;
	\end{equation*}
	
	{\rm (ii)}  $1\le p <q=\infty$,
	\begin{equation*}
		D_2:=\sup_{n\in \Z} \,\sigma_p(-\infty,n)\left(\sup_{n\le i<\infty}U(n,i)w_i^{\frac{1}{p}}\right) < \infty;
	\end{equation*}
	
	{\rm (iii)}  $ p=q=\infty$,
	\begin{equation*}
		D_3:=\sup_{n\in \Z} \,v_n^{-1}\left(\sup_{n\le i<\infty}U(n,i)w_i^{\frac{1}{p}}\right) < \infty;
	\end{equation*}
	
	{\rm (iv)}  $0<q<p=\infty$,
	\begin{equation*}
		D_4:=\left(\sum_{n\in \Z} \,\left(\sum_{i=-\infty}^n U(i,n)v_i^{-1}\right)^q w_{n}\right)^{\frac{1}{q}}
		< \infty;
	\end{equation*}
	
	{\rm (v)}  $0<q<p<\infty$,
	\begin{equation*}
		D_{5}:=\left(\sum_{n\in \Z} \,\left(\sum_{i=n}^{\infty}w_i\right)^{\frac{q}{p-q}} w_n
		\sup_{-\infty<i\le n}
		U(i,n)^{-\frac{qp}{q-p}} \sigma_p(-\infty,i)^{\frac{q}{q-p}}\right)^{\frac{p-q}{pq}}
		< \infty,
	\end{equation*}
	\begin{equation*}
		D_{6}:=\left(\sum_{n\in \Z} \,\left(\sum_{i=n}^{\infty}U(n,i)^q w_i\right)^{\frac{q}{p-q}} w_n \sup_{-\infty<i\le n}U(i,n)^q
		\sigma_p(-\infty,i)^{\frac{q}{q-p}}\right)^{\frac{p-q}{pq}}< \infty.
	\end{equation*}
	Moreover, if $C$ is the optimal constant in~\eqref{E:main-supremal-small}, then, in cases \textup{(i)}--\textup{(iv)}, $C\approx  D_k$, where $k$ is the index corresponding to the label of the case, and in case \textup{(v)} we have  $C\approx  D_{5}+ D_{6}$.
\end{theorem}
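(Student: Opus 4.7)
The plan is to reduce~\eqref{E:main-supremal-small} to a~Hardy-type kernel inequality already characterized in Theorems~\ref{Th1.1} and~\ref{T:only-p-1}. There are two regimes, depending on whether $p$ and $q$ are both finite.

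In the principal regime $1\le p<\infty$ and $0<q<\infty$ (which covers cases~\textup{(i)} and~\textup{(v)}) I would first invoke Theorem~\ref{T:supremalpge}, which gives that~\eqref{E:main-supremal-small} is equivalent, with constants depending only on $p$ and $q$, to~\eqref{E:C'}. The substitution $b_n=a_n^p$ followed by raising both sides to the $p$-th power then rewrites~\eqref{E:C'} as~\eqref{E:discrete-kernel-gop-dual-1} with auxiliary parameters $\tilde p=1$, $\tilde q=q/p$, regular kernel $\tilde U=U^p$, and weights $\tilde v_n=\sigma_p(-\infty,n)^{-p}$ and $\tilde w_n=w_n$; the best constants are related by $\tilde C\approx (C')^p$. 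Next I would apply Theorem~\ref{T:only-p-1}, valid because $\tilde p=1\le 1$. If $q\ge p$, so $\tilde q\ge 1$, case~\textup{(i)} of Theorem~\ref{T:only-p-1} gives $\tilde C\approx A_1$, and a direct substitution identifies $A_1^{1/p}$ with $D_1$, yielding case~\textup{(i)}. If $q<p$, so $\tilde q<1$, case~\textup{(iii)} of Theorem~\ref{T:only-p-1} gives $\tilde C\approx A_{12}+A_{13}$; computing $\tilde q'=q/(q-p)$, $\tilde U^{-\tilde q'}=U^{pq/(p-q)}$ and $\tilde v_i^{\tilde q'}=\sigma_p(-\infty,i)^{pq/(p-q)}$, one matches $A_{12}^{1/p}\approx D_5$ and $A_{13}^{1/p}\approx D_6$, yielding case~\textup{(v)}.

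For the boundary cases~\textup{(ii)}, \textup{(iii)} and~\textup{(iv)}, in which $p=\infty$ or $q=\infty$, Theorem~\ref{T:supremalpge} is not applicable, so I would instead use Theorem~\ref{T:main-supremal}: it bridges~\eqref{E:main-supremal-small} to the continuous inequality~\eqref{1-bis} with the step-function weights $v,w$ and the extended kernel $U$ defined in Theorem~\ref{T:main-prop1}. Applying the continuous characterization stated immediately after Theorem~\ref{main} and specializing the conditions $\mathcal A_k$ to step-function weights collapses essential suprema and integrals over intervals $(m-1,m]$ to suprema and sums over $\Z$, by monotonicity of $U$ and of $\sigma_p(-\infty,\cdot)$. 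This yields $\mathcal A_k\approx D_k$ for the respective $k\in\{2,3,4\}$. In the extreme case $p=q=\infty$, a direct test-function argument (taking $a_j=v_k^{-1}\chi_{\{j=k\}}$ and exploiting that the regular kernel $U$ is nonincreasing in its first variable to evaluate the inner supremum) settles both necessity and sufficiency of $D_3$ immediately.

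The hard part will be case~\textup{(v)}: tracking exponents through the composed reduction of Regime~A (first Theorem~\ref{T:supremalpge}, then the $p$-th power substitution) is delicate, and a~careful computation is required to match the resulting expressions with the exact form of $D_5$ and $D_6$. A secondary difficulty arises in Regime~B, where one must verify that the continuous-to-discrete specialization is genuinely two-sided; this uses the monotonicity of $\sigma_p(-\infty,\cdot)$ to identify $\esssup_{x\in(m-1,m]}\sigma_p(-\infty,x)$ with $\sigma_p(-\infty,m)$, together with the regularity of $U$ to push essential suprema of $U$ across intervals to their values at integer endpoints.
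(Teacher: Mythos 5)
Your proposal follows essentially the same route as the paper. The paper's actual proof is extremely terse: it invokes the chain $C\approx C'$ from Theorem~\ref{T:supremalpge} to pass from \eqref{E:main-supremal-small} to \eqref{E:C'}, and then declares the result a consequence of Theorem~\ref{Th1.1}, leaving implicit both the rescaling $b_n=a_n^p$ that turns \eqref{E:C'} into \eqref{E:discrete-kernel-gop-dual-1} with $\tilde p=1$, $\tilde q=q/p$, $\tilde U=U^p$, $\tilde v_n=\sigma_p(-\infty,n)^{-p}$, $\tilde w=w$, and the case analysis in Theorem~\ref{Th1.1}. You make these steps explicit (invoking Theorem~\ref{T:only-p-1} instead of Theorem~\ref{Th1.1}, which is equivalent at $\tilde p=1$) and, unlike the paper, you note that Theorem~\ref{T:supremalpge} does not cover $p=\infty$ or $q=\infty$ and handle those boundary cases separately via Theorem~\ref{T:main-supremal} and the continuous characterization after Theorem~\ref{main}. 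That extra care is warranted.

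One concrete point to fix. In case~\textup{(v)} you compute $\tilde v_i^{\tilde q'}=\sigma_p(-\infty,i)^{pq/(p-q)}$ (which is correct: $\tilde q'=q/(q-p)$ so $\tilde v_i^{\tilde q'}=(\sigma_p(-\infty,i)^{-p})^{q/(q-p)}=\sigma_p(-\infty,i)^{pq/(p-q)}$) and then assert ``one matches $A_{12}^{1/p}\approx D_5$''. But the exponent on $\sigma_p$ in the printed $D_5$ and $D_6$ is $\tfrac{q}{q-p}=-\tfrac{q}{p-q}$, which is not $\tfrac{pq}{p-q}$; these disagree both in sign and in the factor $p$. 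Your intermediate computation is the right one: for $p=1$ it collapses exactly to the original $A_{12}$ (using that $\sup_{i\le n}U(i,n)^{\alpha}\sup_{j\le i}v_j^{-\alpha}=\sup_{j\le n}U(j,n)^{\alpha}v_j^{-\alpha}$ by the monotonicity of $U$ in its first argument), whereas the printed $D_5$ does not. So rather than claiming a match, you should record that the derived discrete conditions have $\sigma_p(-\infty,i)^{\frac{pq}{p-q}}$, i.e.\ the exponent in $D_5,D_6$ (and correspondingly in $\mathcal A_{12},\mathcal A_{13}$) should be read as $\frac{pq}{p-q}$. With that correction, your argument goes through.
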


\section{Proofs of theorems from Section~\ref{S:2}}

\begin{proof}[Proof of Theorem~\ref{T:main-prop1}]
	We shall perform the proof only in the case when both $p$ and $q$ are finite. The proof in the remaining cases is analogous and therefore omitted. Suppose that~\eqref{E:discrete-kernel-gop-dual-1} holds and let $f\in\MM$. Set $a_n=\int_{n-1}^{n} f$   for $n\in\Z$. Then we get, using the H\"older inequality,
	\begin{equation*}
		\left(\sum_{n\in\Z}a_n^p v_n\right)^\jp
		\le
		\left(\sum_{n\in\Z}\int_{n-1}^{n} f(t)^p v_n\dt \right)\sp{\frac 1p}
		=
		\left(\int_{-\infty}\sp{\infty}f(t)^p v(t)\dt\right)^\jp
	\end{equation*}
	and
	\begin{align*}
		\left(
		\sum_{n\in\Z}\left(\sum_{i=-\infty}^{n} U(i,n) a_i\right)^q w_n
		\right)\sp{\frac 1q}
		&=\left(
		\sum_{n\in\Z} \left(\int_{-\infty}^{n}U(y,n)f(y)\dy\right)^q \int_{n-1}^{n} w(t)\dt
		\right)\sp{\frac 1q}  \\
		&\ge\left(
		\sum_{n\in\Z}\int_{n-1}^{n} \left(\int_{-\infty}^{t} U(y,t)f(y)\dy\right)^q w(t)\dt
		\right)\sp{\frac 1q} \\
		&=\left(
		\int_{-\infty}^{\infty} \left(\int_{-\infty}^{t} U(y,t)f(y)\dy\right)^q w(t)\dt
		\right)\sp{\frac 1q},
	\end{align*}
	and~\eqref{E:spoj-gop} follows.
	
	Conversely, assume that~\eqref{E:spoj-gop} is satisfied. Let $\sqa\in\RpZ$ be arbitrary. Define
	\begin{equation*}
		f=\sumZ a_n\chi_{(n-1,n]}.
	\end{equation*}
	Then we get
	\begin{align*}
		\left(
		\sum_{n\in\Z}\left(\sum_{i=-\infty}^n U(i,n) a_i\right)^q w_n
		\right)\sp{\frac 1q}
		&\le 2^{1+\frac1q} \left(
		\sum_{n\in\Z} \left(\int_{-\infty}^{n-\frac12}U(y,n)f(y)\dy\right)^q \int_{n-\frac12}^{n} w(t)\dt
		\right)\sp{\frac 1q} \\
		&\le 2^{1+\frac1q}\left(
		\sum_{n\in\Z}\int_{n-\frac12}^{n} \left(\int_{-\infty}^{t} U(y,t)f(y)\dy\right)^q w(t)\dt
		\right)\sp{\frac 1q}  \\
		&\le 2^{1+\frac1q}\left(
		\int_{-\infty}^{\infty} \left(\int_{-\infty}^{t}U(y,t)f(y)\dy\right)^q w(t)\dt
		\right)\sp{\frac 1q}.
	\end{align*}
	Now, using~\eqref{E:spoj-gop}, we obtain
	\begin{equation*}
		\left(
		\sum_{n\in\Z}\left(\sum_{i=-\infty}^n U(i,n) a_i\right)^q w_n
		\right)\sp{\frac 1q}
		\le 2^{1+\frac1q}C' \Bigg(\int_{-\infty}\sp{\infty}f(t)\sp pv(t)\dt\Bigg)\sp{\frac 1p} = 2^{1+\frac1q}C' \left(\sum_{n\in\Z}a_n^p v_n\right)^\jp.
	\end{equation*}
	Hence, \eqref{E:discrete-kernel-gop-dual-1} follows.
\end{proof}

\begin{proof}[Proof of Theorem~\ref{Th1.1}]
	The assertion follows from Theorem~\ref{T:main-prop1} and known necessary and sufficient conditions for~\eqref{E:spoj-gop} which were established in~\cite[Chap XI, $\S$1.5, Theorem 4]{KA} (assertions (i)--(vi)), \cite{Oin} (assertions (vii) and (viii)) and~\cite{Kre4} (assertions (ix) and (x)). It is easy to verify that since $U$ is a~regular kernel, its extension to $\R^{2}$ satisfies hypotheses of the cited results.
\end{proof}

Now we shall prove Theorem~\ref{T:kernel-main}. To this end we shall need a few auxiliary lemmas.

\begin{lemma}\label{ldisc}
	Let $\sqw\in\RpZ$ and $1 < D< \infty$. Then there exist elements $M\in\Z\cup\{\infty\}$ and $N\in\{-\infty\}\cup\Z$ and a strictly increasing sequence $\{n_k\}_{k= N-1}^{M}$ such that the following three conditions are satisfied:
	
	\textup{(i)} if $M\in\Z$, then $\sum_{i=n_M}^{\infty}  w_i>0$ and
	$\sum_{i=k}^{\infty} w_i=0$ for every $k> n_M$;
	if $N\in\Z$, then $n_{N-1}=-\infty$,
	
	\textup{(ii)} $\sum_{i=n_{k-1}+1}^{\infty} w_i\le D\sum_{i=n_k}^{\infty} w_i$ \quad \text{for every $k\in\Z\cap[N,M]$},
	
	\textup{(iii)} $D\sum_{i=n_k+1}^{\infty} w_i\leq \sum_{i=n_{k-1}}^{\infty}w_i$ \quad \text{for every $k\in\Z\cap(N,M)$}.
\end{lemma}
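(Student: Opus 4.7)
The plan is to build the sequence by selecting indices at which the tail sums $W_n := \sum_{i=n}^{\infty} w_i \in [0,\infty]$ decrease by roughly a factor of $D$; the conditions (ii) and (iii) are then simply the two-sided version of this statement. Since $W$ is nonincreasing in $n$, such an iterative selection is natural.

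First I would handle the two degenerate cases in which the lemma is essentially vacuous: the case $w\equiv 0$, where any strictly increasing sequence works, and the case $W_n \equiv \infty$, where (ii) and (iii) hold with both sides infinite regardless of the choice of $\{n_k\}$, so one may set $n_k=k$, $N=-\infty$, $M=\infty$.

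In the main case, define $M := \sup\{n\in\Z : W_n > 0\} \in \Z\cup\{\infty\}$. If $M$ is finite, set $n_M := M$, which gives the $M$-part of (i) at once; if $M=\infty$, fix any $n_0$ with $W_{n_0}>0$ as a seed. Then iterate backward by
\begin{equation*}
n_{k-1} := \max\{n\in\Z : n < n_k,\; W_n > D\, W_{n_k}\},
\end{equation*}
which is well-defined whenever the set is nonempty because, by monotonicity of $W$, the set is a downward-closed subset of $\Z$ bounded above by $n_k-1$, so the maximum is attained. The defining inequality then yields $W_{n_{k-1}} > D\, W_{n_k} \ge D\, W_{n_k + 1}$, which is (iii), while maximality gives $W_{n_{k-1}+1} \le D\, W_{n_k}$, which is (ii). When $M=\infty$ one iterates forward analogously via $n_{k+1} := \min\{n > n_k : W_{n+1} \le W_{n_k}/D\}$; this minimum exists and is finite because $W_n\to 0$ as $n\to\infty$, and the verification of (ii) and (iii) is symmetric.

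Finally, a stopping analysis determines $N$. If the backward set is empty at some step, declare $N := k$ with the convention $n_{N-1} := -\infty$, and observe that the emptiness means $W_n \le D\,W_{n_N}$ for every $n < n_N$; passing to the limit $n\to-\infty$ yields $\sum_{i\in\Z} w_i \le D\, W_{n_N}$, which is precisely (ii) at $k=N$. If the backward recursion never stalls, set $N := -\infty$. A helpful structural observation is that $W_{n_{k-1}} > D\,W_{n_k}$ forces the backward values to grow by at least a factor $D$ at each step while remaining bounded by the total mass $S := \sum_{i\in\Z} w_i$, so the backward recursion terminates in finitely many steps precisely when $S<\infty$. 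The main obstacle will be the bookkeeping of the four possible configurations of $(N,M)\in(\Z\cup\{-\infty\})\times(\Z\cup\{\infty\})$ and, in particular, making sure that (ii) at the boundary index $k=N$ under the convention $n_{N-1}=-\infty$ matches the stated formula $\sum_{i=n_{N-1}+1}^{\infty} w_i \le D\, W_{n_N}$.
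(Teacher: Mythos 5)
Your proof is correct, but it proceeds by a genuinely different construction from the one in the paper. The paper's argument is a dyadic (base-$D$) binning: it defines the level sets $A_j=\bigl\{n\in\Z: D^{-j}<\sum_{i=n}^{\infty}w_i\le D^{-j+1}\bigr\}$, extracts the maximal subsequence $\{A_{m_k}\}_{k=N}^{M}$ of nonempty sets, puts $n_k=\sup A_{m_k}$, and then reads off (ii) and (iii) from the two-sided bound $D^{-m_k}<\sum_{i=n_k}^{\infty}w_i\le D^{-m_k+1}$ combined with the strict monotonicity of $\{m_k\}$. You instead grow $\{n_k\}$ greedily via $n_{k-1}=\max\bigl\{n<n_k:\sum_{i=n}^\infty w_i>D\sum_{i=n_k}^\infty w_i\bigr\}$ (together with a forward analogue when $M=\infty$), reading (iii) from the defining inequality and (ii) from maximality, resp.\ minimality. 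Both are standard discretization devices and generally produce different sequences. The paper's binning gives every $n_k$ in closed form from the level sets alone, with no seed and no need to stitch a forward and a backward run together; your recursion makes the intended ``decay by roughly a factor $D$'' visible at each step and leads to a very transparent stopping analysis for $N$. One small inaccuracy in your side remark: it is not true that the backward recursion terminates in finitely many steps precisely when $\sum_{i\in\Z}w_i<\infty$. In the mixed case where $\sum_{i=n}^\infty w_i=\infty$ for all $n$ below some threshold while $\sum_{i\in\Z}w_i=\infty$, the recursion also stalls as soon as it reaches an index with infinite tail, because $\bigl\{n:\sum_{i=n}^\infty w_i>D\cdot\infty\bigr\}=\emptyset$. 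This has no bearing on correctness, since your stopping analysis already covers that case and (ii) at $k=N$ then reduces to the trivially true $\sum_{i\in\Z}w_i\le D\cdot\infty$.
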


\begin{proof}
	Define the sets $A_k$ by
	\begin{equation*}
		A_k=\left\{ k\in \Z:D^{-k}< \sum_{i=k}^{\infty} w_i\le D^{-k+1}\right\}, \quad k\in \Z.
	\end{equation*}
	Let $\{A_{m_k}\}_{k=N}^{M}$ be the maximal subsequence of $\{A_k\}_{k\in \Z}$
	which contains only nonempty sets and let $n_k=\sup A_{m_k}$.
	It is clear that the sequence $\{n_k\}_{k=N}^M$ is strictly increasing.
	Moreover, if $M\in\Z$, then $\sum_{i= n_M}^{\infty} w_i>0$ and $\sum_{i=k}^{\infty}  w_i=0$ for every
	$k > n_M$. If $N\in\Z$, then $n_N>-\infty$ and we put $n_{N-1}=-\infty$. This shows (i). We have
	\begin{equation} \label{2.2}
		D^{-m_k}< \sum_{i=n_k}^{\infty}  w_i\le D^{-m_k+1} \quad \text{for every $k\in\Z\cap[N,M]$}.
	\end{equation}
	If $n_{k-1} < j\le n_{k}$, then $j\in A_{m_k}$. Together with \eqref{2.2}, this implies  that
	\begin{equation*}
		\sum_{i= n_{k-1}+1}^{\infty} w_i \le D \sum_{i=n_k}^{\infty}  w_i \quad \text{for every $k\in\Z\cap[N,M]$},
	\end{equation*}
	and (ii) follows. On the other hand,
	\begin{equation*}
		D\sum_{i= n_k+1}^{\infty}w_i \leq D^{-m_{k+1}+2}
		\leq D^{-m_{k-1}}\leq \sum_{i=n_{k-1}}^{\infty}  w_i\quad \text{for every $k\in\Z\cap(N,M)$},
	\end{equation*}
	and (iii) follows.
\end{proof}

\begin{lemma}\label{L:estimates-general}
	Let $ \sqw\in\RpZ$, $1 < D < \infty$ and let $\sqb\in\RpZ$ be nondecreasing. Let $M$, $N$ and $\{n_k\}_{k=N-1}^{M}$ be as in Lemma~\ref{ldisc}.
	Then
	\begin{equation*}
		\frac{D-1}{3D}\sum_{k=N}^{M}\Big ( \sum_{n=n_{k}}^{\infty} w_n \Big) b_{n_k}
		\le
		\sum_{n\in \Z} w_n  b_n
		\le
		D\sum_{k=N}^{M}\Big ( \sum_{n=n_{k}}^{\infty} w_n \Big) b_{n_k}.
	\end{equation*}
\end{lemma}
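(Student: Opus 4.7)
To prove the lemma, I would set $S_k := \sum_{n=n_k}^{\infty} w_n$ for brevity, so the target inequality reads $\frac{D-1}{3D}\sum_{k=N}^{M} S_k b_{n_k} \le \sumZ w_n b_n \le D \sum_{k=N}^{M} S_k b_{n_k}$. The upper bound I would obtain via a standard non-overlapping block decomposition: partition $\Z$ into the disjoint blocks $(n_{k-1},n_k]$, $N\le k\le M$, using the conventions $n_{N-1}=-\infty$ when $N\in\Z$ and $w_n=0$ for $n>n_M$ when $M\in\Z$ (the latter is guaranteed by part~(i) of Lemma~\ref{ldisc}). Since $\sqb$ is nondecreasing, $b_n \le b_{n_k}$ on each block, and part~(ii) of Lemma~\ref{ldisc} then gives
\begin{equation*}
\sum_{n=n_{k-1}+1}^{n_k} w_n b_n \le b_{n_k} \sum_{n=n_{k-1}+1}^{\infty} w_n \le D\, b_{n_k} S_k,
\end{equation*}
and summation over $k$ finishes this side.

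The lower bound is the more delicate one. Here my plan is to use \emph{overlapping} blocks $[n_k,n_{k+1}]$. Property~(iii) of Lemma~\ref{ldisc}, shifted and rewritten as $\sum_{n=n_{k+1}+1}^{\infty} w_n \le S_k/D$, immediately yields $\sum_{n=n_k}^{n_{k+1}} w_n \ge \frac{D-1}{D}S_k$. Combined with the fact that $b_n\ge b_{n_k}$ on $[n_k,n_{k+1}]$, this produces $\sum_{n=n_k}^{n_{k+1}} w_n b_n \ge \frac{D-1}{D}\, b_{n_k} S_k$ for each $N\le k\le M-1$. Since every integer $n$ lies in at most two of these overlapping blocks (the breakpoints $n_j$ with $N<j<M$ are counted twice, everything else at most once), summation in $k$ gives
\begin{equation*}
2 \sumZ w_n b_n \ge \tfrac{D-1}{D} \sum_{k=N}^{M-1} b_{n_k} S_k.
\end{equation*}
Finally, when $M\in\Z$ the leftover term $b_{n_M}S_M$ must be absorbed separately: part~(i) of Lemma~\ref{ldisc} yields $w_n=0$ for $n>n_M$, hence $S_M=w_{n_M}$, and thus $\sumZ w_n b_n \ge w_{n_M} b_{n_M} = b_{n_M}S_M$. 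Adding this estimate to the previous display produces $3\sumZ w_n b_n \ge \frac{D-1}{D}\sum_{k=N}^{M} b_{n_k}S_k$, which is the claimed inequality.

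The main obstacle I expect is not the overlap/double-counting calculation itself but keeping the bookkeeping clean across all four combinations of finite or infinite $N$ and $M$. In the case $M=\infty$ (or $N=-\infty$) only the overlapping-block step is needed and one even recovers the sharper constant $\frac{D-1}{2D}$; the factor $3$ is only forced by the boundary contribution at $k=M$ when $M$ is finite. A minor edge case appears at $k=M-1$ in the overlapping-block step, where property~(iii) is not directly available, but $\sum_{n=n_M+1}^{\infty} w_n=0$ by construction, so the needed inequality holds trivially and the overall argument goes through unchanged.
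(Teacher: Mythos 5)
Your proof is correct and follows essentially the same strategy as the paper's: the upper bound via the non-overlapping partition $(n_{k-1},n_k]$ together with part~(ii) of Lemma~\ref{ldisc}, and the lower bound via overlapping blocks controlled by part~(iii). The only difference is cosmetic: the paper uses triple-overlapping blocks $[n_k,n_{k+2}]$ with multiplicity at most $3$, whereas you use pairs $[n_k,n_{k+1}]$ with multiplicity at most $2$ and then absorb the leftover $k=M$ term separately when $M$ is finite; your version is a bit more careful at the boundary (and in the $M=\infty$ case yields the slightly sharper constant $\frac{D-1}{2D}$), but it is not a genuinely different route.
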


\begin{proof}
	We have
	\begin{align*}
		\sum_{n\in \Z} w_n  b_n
		& = \sum_{k=N}^{M} \sum_{n=n_{k-1}+1}^{n_{k}} w_nb_n
		\le D \sum_{k=N}^{M}\Big ( \sum_{n=n_{k}}^{\infty} w_n \Big) b_{n_k}.
	\end{align*}
	On the other hand,
	\begin{align*}
		\sum_{n\in \Z} w_n  b_n
		&\ge \frac 13  \sum_{k=N}^{M} \sum_{n=n_{k}}^{n_{k+2}} w_n b_n
		\ge \frac13 \sum_{k=N}^{M}\Big ( \sum_{n=n_{k}}^{n_{k+2}} w_n \Big) b_{n_k}
		\ge \frac13 (1-D^{-1}) \sum_{k=N}^{M}\Big ( \sum_{n=n_{k}}^{\infty} w_n \Big) b_{n_k}.
	\end{align*}
\end{proof}



\begin{lemma}\label{L:2.4}
	Let $\sqw\in\RpZ$, $0 < p \leq 1$ and $0 < q < \infty$. Suppose that $U^p$ is a regular kernel satisfying inequality \eqref{oin} with constant $C$. Assume further that
	\begin{equation*}
		2\max(1,2^{\qp-1})^2C^{\frac{q}{p}} \leq D < \infty
	\end{equation*}
	holds.  Assume that $M$, $N$ and $\{n_k\}_{k=N-1}^{M}$ are as in Lemma~\ref{ldisc}. Then
	\begin{align*}
		\sum_{n \in \Z}w_n \Big[ \sum_{ i=-\infty}^{n} U(i,n)^pa_i^p \Big]^\qp
		& \approx \sum_{k=N}^{M}\Big ( \sum_{n=n_{k}}^{\infty} w_n \Big) \Big[ \sum_{i=n_{k-1}+1}^{n_{k}} U(i,n_k)^pa_i^p \Big]^\qp
		\\
		& + \sum_{k=N+1}^{M}\Big ( \sum_{n=n_{k}}^{\infty}w_n \Big)U(n_{k-1},n_{k})^q \Big[ \sum_{i=-\infty}^{n_{k-1}} a_i^p \Big]^\qp\quad\text{for every $a\in\RpZ$}.
	\end{align*}	
\end{lemma}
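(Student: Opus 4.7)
The plan is to reduce the sum to the discretisation points $n_k$ using monotonicity, establish a pointwise three-term equivalence at those points, and resolve it by exploiting the geometric decay of the tails $W_k:=\sum_{n\ge n_k}w_n$ from Lemma~\ref{ldisc}. Set $b_n:=\bigl(\sum_{i\le n}U(i,n)^p a_i^p\bigr)^{q/p}$. Since $U$ is nondecreasing in the second argument and the range of summation grows with $n$, the sequence $\{b_n\}$ is nondecreasing, so Lemma~\ref{L:estimates-general} yields $\sum_{n\in\Z}w_n b_n\approx\sum_{k=N}^{M} W_k b_{n_k}$, reducing the problem to comparing $\sum_k W_k b_{n_k}$ with $\sum_k W_k(T_k+R_k)$, where $T_k$ and $R_k$ denote the two summands on the right-hand side of the claim.

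The heart of the argument is the pointwise equivalence $b_{n_k}\approx T_k+R_k+b_{n_{k-1}}$. To prove it, split the sum defining $b_{n_k}^{p/q}$ at $i=n_{k-1}$. For the upper bound, apply the regularity of $U^p$, that is $U(i,n_k)^p\le C(U(i,n_{k-1})^p+U(n_{k-1},n_k)^p)$ on the range $i\le n_{k-1}$, and then raise to the power $q/p$ via $(X+Y)^{q/p}\le\max(1,2^{q/p-1})(X^{q/p}+Y^{q/p})$ applied twice; this gives $b_{n_k}\le K(T_k+R_k+b_{n_{k-1}})$ with $K=\max(1,2^{q/p-1})^2 C^{q/p}$. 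For the lower bound, the monotonicity of $U$ in each variable supplies $U(i,n_k)^p\ge\tfrac12[U(i,n_{k-1})^p+U(n_{k-1},n_k)^p]$ for $i\le n_{k-1}$; combined with $(X+Y+Z)^{q/p}\ge\tfrac13[X^{q/p}+Y^{q/p}+Z^{q/p}]$ (a consequence of $(X+Y+Z)^{q/p}\ge\max(X,Y,Z)^{q/p}$), this produces $b_{n_k}\ge c(T_k+R_k+b_{n_{k-1}})$ for some $c=c(p,q)>0$.

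Discarding $b_{n_{k-1}}$ in the lower bound and summing against $W_k$ immediately delivers the $\gtrsim$ direction of the claim; the vanishing of the $k=N$ term in the $R$-sum (because $n_{N-1}=-\infty$ forces $\sum_{i\le n_{N-1}}a_i^p=0$) accounts for the summation range $k\ge N+1$ in the statement. The main obstacle is the $\lesssim$ direction, where the extra term $b_{n_{k-1}}$ must be absorbed on the left-hand side. The plan is to iterate the upper recursion once to obtain
\[
b_{n_k}\le K(T_k+R_k)+K^2(T_{k-1}+R_{k-1})+K^2 b_{n_{k-2}},
\]
multiply by $W_k$, and invoke the two-step decay $W_k\le D^{-1}W_{k-2}$, which is an immediate consequence of Lemma~\ref{ldisc}(iii). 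The last term then becomes $(K^2/D)\,W_{k-2}b_{n_{k-2}}$, so summation in $k$, combined with the trivial bound $W_{k+1}\le W_k$ used on the middle term, converts the inequality into
\[
\sum_k W_k b_{n_k}\le (K+K^2)\sum_k W_k(T_k+R_k)+\frac{K^2}{D}\sum_k W_k b_{n_k}.
\]
The quantitative hypothesis $D\ge 2K$ is calibrated precisely so that the coefficient $K^2/D$ stays strictly below $1$ by a margin controlled by the constants of the problem, allowing the absorption on the left-hand side and completing the upper bound. Managing this geometric-series absorption, and in particular the interplay between $K$ from the recursion and $D$ from the discretisation, is the most delicate point of the proof.
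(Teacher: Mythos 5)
The overall skeleton---reduce via Lemma~\ref{L:estimates-general} to the discretisation points, split the inner sum at $n_{k-1}$, invoke the regularity of $U^p$, and then absorb the recursive term $b_{n_{k-1}}$ using the geometric structure of the tails---matches the paper's strategy. The lower bound is also fine.

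However, there is a genuine gap in the absorption step. After iterating the recursion once and multiplying by $W_k=\sum_{n\ge n_k}w_n$, you arrive at
\begin{equation*}
\sum_k W_k b_{n_k}\le (K+K^2)\sum_k W_k(T_k+R_k)+\frac{K^2}{D}\sum_k W_k b_{n_k},
\qquad K=\max(1,2^{q/p-1})^2 C^{q/p},
\end{equation*}
and you claim the hypothesis $D\ge 2K$ forces $K^2/D<1$. This is false: $D\ge 2K$ only yields $K^2/D\le K/2$, which exceeds $1$ as soon as $K\ge 2$, and $K$ is unbounded (it grows with the regularity constant $C$). Under the lemma's stated hypothesis the absorption therefore does not go through. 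The root cause is that you work with $W_k$ throughout, and consecutive $W_k$'s need not decay by a factor of $D$ in a single step---only the two-step bound $W_k\le D^{-1}W_{k-2}$ is available from Lemma~\ref{ldisc}(iii)---so you are forced to iterate twice, which squares $K$ but buys only one power of $D$. The paper sidesteps this by replacing $W_k$ with $D^{-m_k}$ via~\eqref{2.2} (at the cost of a harmless factor of $D$). Since the $m_k$'s are \emph{strictly} increasing integers, $D^{-m_k}\le D^{-1}D^{-m_{k-1}}$ holds in a \emph{single} step, so one iteration of the recursion already produces the coefficient $K/D\le 1/2$, which absorbs. If you rewrite your argument with $D^{-m_k}$ in place of $W_k$ before iterating, it becomes correct and coincides with the paper's proof.
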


\begin{proof}
	By Lemma~\ref{L:estimates-general}, it is clear that
	\begin{align*}
		\sum_{n\in \Z} w_n  \Big[ \sum_{i=-\infty}^{n} U(i,n)^pa_i^p \Big]^\qp \approx
		\sum_{k=N}^{M}\Big ( \sum_{n=n_{k}}^{\infty} w_n \Big) \Big[ \sum_{i=-\infty}^{n_k} U(i,n_k)^pa_i^p \Big]^\qp.
	\end{align*}
	Therefore, it is enough to show that
	\begin{align*}
		\sum_{k=N}^{M}\Big ( \sum_{n=n_{k}}^{\infty} w_n \Big) &\Big[ \sum_{i=-\infty}^{n_k} U(i,n_k)^pa_i^p \Big]^\qp
		\approx \sum_{k=N}^{M}\Big ( \sum_{n=n_{k}}^{\infty} w_n \Big) \Big[ \sum_{i=n_{k-1}+1}^{n_{k}} U(i,n_k)^pa_i^p \Big]^\qp
		\\
		& + \sum_{k=N+1}^{M}\Big ( \sum_{n=n_{k}}^{\infty} w_n \Big)U(n_{k-1},n_{k})^q \Big[ \sum_{i=-\infty}^{n_{k-1}} a_i^p \Big]^\qp\quad\text{for every $a\in\RpZ$}.
	\end{align*}	
	By the regularity of $U$, we have that
	\begin{align*}
		\sum_{k=N}^{M}D^{-m_{k}} \Big[ \sum_{i=-\infty}^{n_k} U(i,n_k)^pa_i^p \Big]^\qp
		&\le \max(1,2^{\qp-1}) \sum_{k=N}^{M} D^{-m_{k}}  \Big[ \sum_{i= n_{k-1}+1}^{n_k}  U(i,n_k)^pa_i^p \Big]^\qp
		\\
		&+ \max(1,2^{\qp-1})\sum_{k=N+1}^{M} D^{-m_{k}}  \Big[ \sum_{i=-\infty}^{n_{k-1}}U(i,n_k)^p a_i^p \Big]^\qp
		\\
		&\le \max(1,2^{\qp-1}) \sum_{k=N}^{M}D^{-m_{k}}  \Big[ \sum_{i=n_{k-1}+1}^{n_{k}} U(i,n_k)^pa_i^p \Big]^\qp
		\\
		&+ \max(1,2^{\qp-1})^2 C^{\frac qp}\sum_{k={N+1}}^{M} D^{-m_{k}} U(n_{k-1},n_{k})^q \Big[ \sum_{i=-\infty}^{n_{k-1}} a_i^p \Big]^\qp
		\\
		&+ \max(1,2^{\qp-1})^2 C^{\frac qp}\sum_{k=N+1}^{M} D^{-m_{k}}  \Big[ \sum_{i=-\infty}^{n_{k-1}} U(i,n_{k-1})^p a_i^p \Big]^\qp
		\\
		&\le \max(1,2^{\qp-1}) \sum_{k=N}^{M}D^{-m_{k}}  \Big[ \sum_{i=n_{k-1}+1}^{n_{k}} U(i,n_k)^pa_i^p \Big]^\qp
		\\
		&+ \max(1,2^{\qp-1})^2 C^{\frac qp}\sum_{k=N+1}^{M} D^{-m_{k}} U(n_{k-1},n_{k})^q \Big[ \sum_{i=-\infty}^{n_{k-1}} a_i^p \Big]^\qp
		\\
		&+ \frac12\sum_{k=N+1}^{M} D^{-m_{k-1}}  \Big[ \sum_{i=-\infty}^{n_{k-1}}U(i,n_{k-1})^p a_i^p \Big]^\qp.
	\end{align*}	
	Therefore,
	\begin{equation*}\begin{split}
			\sum_{k=N}^{M}D^{-m_{k}} \Big[ \sum_{i=-\infty}^{n_k} U(i,n_k)^pa_i^p \Big]^\qp
			& \le 2 \max(1,2^{\qp-1}) \sum_{k=N}^{M}D^{-m_{k}}  \Big[ \sum_{i=n_{k-1}+1}^{n_{k}} U(i,n_k)^pa_i^p \Big]^\qp
			\\
			& + 2\max(1,2^{\qp-1})^2 C^{\frac qp}\sum_{k=N+1}^{M} D^{-m_{k}} U(n_{k-1},n_{k})^q \Big[ \sum_{i=-\infty}^{n_{k-1}} a_i^p \Big]^\qp
		\end{split}
	\end{equation*}	
	holds. On the other hand, the estimate from below can be obtained easily by the monotonicity of $U$. The proof is complete.
\end{proof}

The following result is of a general nature. Note that there is no regularity or any other restriction required on the kernel.

\begin{theorem}\label{L:2.5}
	Let $\sqc\in\RpZ$, $0 < p, q < \infty$, and $U\colon\Z\times\Z\to[0,\infty)$. Assume that $M$, $N$ and $\{n_k\}_{k=N-1}^{M}$ are as in Lemma~\ref{ldisc}. Then
	\begin{align*}
		&\sup_{a\in\RpZ} \left(\sum_{k=N}^{M}c_k \Big[ \sum_{i=n_{k-1}+1}^{n_{k}} U(i,n_k)^pa_i^p \Big]^\qp\right)^{\frac1q}
		\Big ( \sum_{n\in \Z} a_n^p v_n \Big)^{-\frac1p}\\
		& \hskip+1cm \approx \sup_{a\in\RpZ} \left(\sum_{k=N}^{M}c_k \Big[ \sup_{n_{k-1}+1 \le i \le n_{k}} U(i,n_k)^pa_i^p \Big]^\qp\right)^{\frac1q}
		\Big ( \sum_{n\in \Z} a_n^p v_n \Big)^{-\frac1p}.
	\end{align*}	
\end{theorem}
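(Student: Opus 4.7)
The strategy is to show that the two suprema in fact coincide (so the ``$\approx$'' holds with constant $1$). The direction where the sum-expression dominates the sup-expression is immediate from the pointwise bound $\sup_{i\in I_k} x_i\le\sum_{i\in I_k} x_i$ for nonnegative $x_i$ (writing $I_k:=\{i\in\Z:n_{k-1}+1\le i\le n_k\}$); monotonicity of the outer weighted $\ell^{q/p}(c_k)$-norm and the normalization by $\|a\|_{\ell^p(v)}$ preserve it upon passing to supremum over $a$.

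For the converse, my plan is a mass-redistribution argument. Given $a\in\RpZ$, introduce
$$B_k^*:=\sup_{i\in I_k} U(i,n_k)^p v_i^{-1},\qquad m_k(a):=\sum_{i\in I_k} a_i^p v_i,$$
and observe the elementary identity-plus-estimate
$$\sum_{i\in I_k} U(i,n_k)^p a_i^p=\sum_{i\in I_k}\bigl(U(i,n_k)^p v_i^{-1}\bigr)\bigl(a_i^p v_i\bigr)\le B_k^*\,m_k(a),$$
which is the key step. Next, for each $k$ I would pick an index $i_k^*\in I_k$ attaining $B_k^*$ up to a factor $1-\eps$, and concentrate the block mass at those points by defining a new sequence $b$ via $b_{i_k^*}^p v_{i_k^*}:=m_k(a)$ and $b_i:=0$ for $i\in I_k\setminus\{i_k^*\}$. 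Pairwise disjointness of the blocks makes $b$ globally well-defined and gives $\|b\|_{\ell^p(v)}^p=\sum_k m_k(a)\le\|a\|_{\ell^p(v)}^p$, while the construction yields
$$\sup_{i\in I_k} U(i,n_k)^p b_i^p=U(i_k^*,n_k)^p v_{i_k^*}^{-1}\,m_k(a)\ge(1-\eps)B_k^*\,m_k(a).$$
Chaining these, raising to the $q/p$-power, multiplying by $c_k$, summing in $k$, raising to the $1/q$, dividing by $\|a\|_{\ell^p(v)}\ge\|b\|_{\ell^p(v)}$, then taking supremum over $a$ and sending $\eps\to 0$ yields the converse bound.

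The only (minor) technical wrinkle I anticipate is that when $k=N$ and $n_{N-1}=-\infty$, the block $I_N$ is infinite and the supremum $B_N^*$ need not be attained; the $\eps$-maximizer above takes care of this. In the degenerate cases where some $B_k^*=+\infty$ (which happens exactly when $v_i=0$ at an index with positive kernel value), both suprema are simultaneously infinite already on delta sequences supported in $I_k$, so there is nothing to prove. It is worth emphasising, in line with the remark preceding the statement, that the argument uses only $U\ge 0$ and invokes neither regularity of $U$ nor any monotonicity of the weights.
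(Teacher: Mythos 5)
Your proof is correct and follows essentially the same argument as the paper: both rest on the H\"older-type bound $\sum_{i\in I_k}U(i,n_k)^p a_i^p\le B_k^*\,m_k(a)$ and on concentrating each block's $\ell^p(v)$-mass at a (near-)maximizer of $U(\cdot,n_k)^p v_{\cdot}^{-1}$. The only difference is cosmetic: you use a $(1-\eps)$-maximizer and let $\eps\to0$, whereas the paper fixes a $2$-approximate maximizer, so your variant yields the slightly sharper conclusion that the two suprema are actually equal rather than merely comparable with a $2^{1/p}$ loss.
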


\begin{proof} Fix $\sqa\in \RpZ$ such that $ \sum_{n\in \Z} a_n^p v_n <\infty$. Let $n_{k-1}+1 <  n_{k_0} < n_{k}$ be such that
	\[
	\sup_{n_{k-1}+1 \le i \le n_{k}} U(i,n_k)^p v_i^{-1} \le 2  U(n_{k_0},n_{k})^p v_{n_{k_0}}^{-1}.
	\]
	Now define the sequence $\sqb\in \RpZ$ by
	\[ b_n=\left\{
	\begin{array}{ll} \Big[ \sum_{i=n_{k-1}+1}^{n_{k}} a_i^p v_i \Big]^{\frac1p}v_{n_{k_0}}^{-\frac1p} & \textrm{if $n= n_{k_0}$ for $k\in\{N, \cdots, M\}$},\\
		0& \textrm{ otherwise}.
	\end{array}\right.
	\]
	We have
	$$
	\sumZ b_n^p v_n = \sum_{k=N}^{M} \sum_{j= n_{k-1}+1}^{ n_{k}} a_j^p v_j  \le \sumZ a_n^p v_n.
	$$
	Altogether, we obtain the following chain of relations,
	\begin{align*}
		&  \left(\sum_{k=N}^{M}c_k \Big[ \sum_{i=n_{k-1}+1}^{n_{k}} U(i,n_k)^pa_i^p \Big]^\qp\right)^{\frac1q}
		\Big ( \sum_{n\in \Z} a_n^p v_n \Big)^{-\frac1p} \\
		& \le   \left(\sum_{k=N}^{M}c_k \left[ \left( \sup_{n_{k-1}+1 \le i \le n_{k}} U(i,n_k)^p v_i^{-1} \right)  \sum_{i=n_{k-1}+1}^{ n_{k}}a_i^p v_i  \right]^\qp\right)^{\frac1q} \Big ( \sum_{n\in \Z} a_n^p v_n \Big)^{-\frac1p}\\
		& \le 2^{\frac{1}{p}}  \left(\sum_{k=N}^{M}c_k \left[ \left( U(n_{k_0},n_{k})^p v_{n_{k_0}}^{-1} \right) \sum_{i=n_{k-1}+1}^{n_{k}}a_i^p v_i  \right]^\qp\right)^{\frac1q} \Big ( \sum_{n\in \Z} a_n^p v_n \Big)^{-\frac1p}\\
		&\le 2^{\frac{1}{p}}  \left(\sum_{k=N}^{M}c_k\Big[  U(n_{k_0},n_k)^p b_{n_{k_0}}^{p}   \Big]^\qp\right)^{\frac1q} \Big ( \sum_{n\in \Z} b_n^p v_n \Big)^{-\frac1p}\\
		& \le 2^{\frac{1}{p}}  \left(\sum_{k=N}^{M}c_k \Big[ \sup_{n_{k-1}+1 \le i \le n_{k}} U(i,n_k)^p b_{i}^{p}   \Big]^\qp\right)^{\frac1q} \Big ( \sum_{n\in \Z} b_n^p v_n \Big)^{-\frac1p}.
	\end{align*}
	Taking supremum on both sides we obtain the upper estimate. On the other hand, the lower estimate follows easily by
	\begin{align*}
		\sup_{n_{k-1}+1 \le i \le n_{k}} U(i,n_k) a_{i}   \le  \sum_{i=n_{k-1}+1}^{n_{k}} U(i,n_k) a_{i}.
	\end{align*}
	Therefore, the statement follows.
\end{proof}

\begin{proof}[Proof of Theorem~\ref{T:kernel-main}]
	Since $0 < p \leq 1$, we have, for any $\{a_n\}\in\RpZ$,
	\begin{equation*}
		\sup_{-\infty<i\le n}U(i,n)a_i \le \sum_{i=-\infty}^{n} U(i,n)a_i\le \Bigg(\sum_{i=-\infty}^{n} U(i,n)^pa_i^p\Bigg)^\frac{1}{p}.
	\end{equation*}
	This establishes the implications $\eqref{E:discrete-kernel-strong}\Rightarrow\eqref{E:discrete-kernel-gop-dual-1}\Rightarrow \eqref{E:discrete-kernel-weak}$ with $C'\le C\le (C'')^{\frac 1p}$. Thus, we will be done if we show that~\eqref{E:discrete-kernel-weak}$\Rightarrow$\eqref{E:discrete-kernel-strong}. By Lemma~\ref{L:2.4}, Theorem~\ref{L:2.5} applied to $c_k=\sum_{n=n_{k}}^{\infty} w_n$ and~\cite[Lemma~3.4]{GOLP}, we have
	\begin{align*}
		\sup_{a\in\RpZ}&\left(\sum_{n\in\Z}w_n \Big[ \sum_{i=-\infty}^{n} U(i,n)^pa_i^p \Big]^\qp\right)^{\frac1q}\Big ( \sum_{n\in \Z} a_n^p v_n \Big)^{-\frac1p}\\
		& \approx
		\sup_{a\in\RpZ}\left(\sum_{k=N}^{M}\Big ( \sum_{n=n_{k}}^{\infty} w_n \Big) \Big[ \sum_{i=n_{k-1}+1}^{n_{k}} U(i,n_k)^pa_i^p \Big]^\qp\right)^{\frac1q}\Big ( \sum_{n\in \Z} a_n^p v_n \Big)^{-\frac1p}
		\\
		& + \sup_{a\in\RpZ}\left(\sum_{k=N+1}^{M}\Big ( \sum_{n=n_{k}}^{\infty} w_n \Big)U(n_{k-1},n_{k})^q \Big[ \sum_{i=-\infty}^{n_{k-1}} a_i^p \Big]^\qp\right)^{\frac1q}\Big ( \sum_{n\in \Z} a_n^p v_n \Big)^{-\frac1p}\\
		& \approx
		\sup_{a\in\RpZ}\left(\sum_{k=N}^{M}\Big ( \sum_{n=n_{k}}^{\infty} w_n \Big) \Big[ \sup_{n_{k-1}+1 \le i \le n_{k}} U(i,n_k)^pa_i^p \Big]^\qp\right)^{\frac1q}\Big ( \sum_{n\in \Z} a_n^p v_n \Big)^{-\frac1p}
		\\
		& + \sup_{a\in\RpZ}\left(\sum_{k=N+1}^{M}\Big ( \sum_{n=n_{k}}^{\infty} w_n \Big)U(n_{k-1},n_{k})^q \Big[ \sup_{-\infty< i\le n_{k-1}} a_i^p \Big]^\qp\right)^{\frac1q}\Big ( \sum_{n\in \Z} a_n^p v_n \Big)^{-\frac1p}\\
		& \leq
		\sup_{a\in\RpZ}\left(\sum_{k=N}^{M}\Big ( \sum_{n=n_{k}}^{\infty} w_n \Big) \Big[ \sup_{-\infty <i\le n_{k}} U(i,n_k)^pa_i^p \Big]^\qp\right)^{\frac1q}\Big ( \sum_{n\in \Z} a_n^p v_n \Big)^{-\frac1p}.
	\end{align*}	
	By Lemma~\ref{L:estimates-general}, we get
	\begin{align*}
		\sup_{a\in\RpZ}&\left(\sum_{n\in\Z}w_n \Big[ \sum_{i=-\infty}^{n} U(i,n)^pa_i^p \Big]^\qp\right)^{\frac1q}\Big ( \sum_{n\in \Z} a_n^p v_n \Big)^{-\frac1p}\\
		& \lesssim
		\sup_{a\in\RpZ}\left(\sum_{n\in\Z}w_n  \Big[ \sup_{-\infty< i \le n} U(i,n)^pa_i^p \Big]^\qp\right)^{\frac1q}\Big ( \sum_{n\in \Z} a_n^p v_n \Big)^{-\frac1p}.
	\end{align*}
	The proof is complete.
\end{proof}

\begin{proof}[Proof of Theorem~\ref{T:main-prop-dual-1}]
	By Theorem~\ref{T:kernel-main},~\eqref{E:discrete-kernel-gop-dual-1} holds if and only if there exists a constant $C''$ such that
	\begin{equation*}
		\Bigg(\sum_{n\in\Z}\Bigg(\sum_{i=-\infty}^{n} U(i,n)^{p}a_i\Bigg)^{\frac{q}{p}} w_n\Bigg)^{\frac pq}
		\le C''
		\sum_{n\in\Z}a_nv_n
	\end{equation*}
	holds for every sequence $\{a_n\} \in \RpZ$. Since $U$ is regular, so is $U^p$. Hence, using Theorem~\ref{T:main-prop1} with $p=1$, $q=\frac{q}{p}$ and $U=U^p$, establishes the claim.
\end{proof}

\section{Proofs of Theorems from Section~\ref{S:3} }

\begin{proof}[Proof of Theorem~\ref{T:six}]
	We first note that, trivially,
	\begin{equation}\label{E:elementary-inequalities-again}
		\sup_{-\infty<i\le n}U(i,n)a_i \le \sup_{-\infty<i\le n}\,U(i,n)\sup_{-\infty<j\le i} a_j \le \sup_{-\infty<i\le n}\,U(i,n)\sum_{j=-\infty}^{i} a_j.
	\end{equation}
	Moreover, monotonicity of $U$ gives
	\begin{equation} \label{E:new-elementary-inequality}
		\sup_{-\infty<i\le n}\,U(i,n)\sum_{j=-\infty}^{i} a_j\le \sum_{i=-\infty}^{n} U(i,n) a_i \le \Bigg(\sum_{i=-\infty}^{n} U(i,n)^pa_i^p\Bigg)^\frac{1}{p},
	\end{equation}
	and
	\begin{equation}\label{E:As}
		\sup_{-\infty<i\le n}\,U(i,n)\sum_{j=-\infty}^{i} a_j\le \sup_{-\infty<i\le n} U(i,n) \left(\sum_{j=-\infty}^{i} a_j^p\right)^{\frac{1}{p}}\le \Bigg(\sum_{i=-\infty}^{n} U(i,n)^pa_i^p\Bigg)^\frac{1}{p}.
	\end{equation}
	We thus obtain from~\eqref{E:elementary-inequalities-again} and~\eqref{E:new-elementary-inequality}
	\begin{equation*}
		B_1\le B_2\le B_3 \le B_4 \le B_6.
	\end{equation*}
	Next, it follows from~\eqref{E:As} that
	\begin{equation*}
		B_3\le B_5\le B_6.
	\end{equation*}
	Finally, by Theorem~\ref{T:kernel-main}, we get that $B_6\le C' B_1$. The proof is complete.
\end{proof}

\begin{proof}[Proof of Theorem~\ref{T:hux}]
	Note that, with a fixed $n\in\Z$, one has
	\begin{align*}
		\sup_{-\infty < i\le n}u_i\sup_{-\infty < j\le i} a_j
		& = \sup_{-\infty < i\le n} (\sup_{i\le j\le n}u_j)  a_i  \\
		\sup_{-\infty <i\le n }(\sup_{i\leq j\leq n} u_j)\sum_{j=-\infty}^{i}a_j &=
		\sup_{-\infty <i \le n}u_i\sum_{j=-\infty}^{i}a_j
		\\
		\sup_{-\infty <i \le n} u_i \left(\sum_{j=-\infty}^i a_j^{p}\right)^{\frac{1}{p}} &= \sup_{-\infty < i\le n}(\sup_{i\leq j\leq n} u_j)\left(\sum_{j=-\infty}^{i}a_j^{p}\right)^{\frac{1}{p}}.
	\end{align*}
	and the kernel $U(i,n)=\sup_{i\le j\le n}u_j$ is a regular kernel  so the result is a special case of Theorem~\ref{T:six}.
\end{proof}

\begin{proof}[Proof of Theorem~\ref{T:cd}]
	It follows immediately from Theorem~\ref{T:six}, namely from the equivalence of $B_3$ and $B_4$, that the inequality~\eqref{E:main-supremal-small} is equivalent to~\eqref{E:discrete-kernel-gop-dual-1}. Therefore, the assertion is a~consequence of Theorem~\ref{T:only-p-1}.
\end{proof}

\begin{proof}[Proof of Theorem~\ref{T:cont}]
	The assertion follows from comparing the corresponding conditions for each of the inequalities, which are well known in all possible cases. Moreover, this comparison shows that if $C$ and $C'$ are the optimal constants in the respective inequalities, then $C'\approx C$.
\end{proof}

The proof of Theorem~\ref{main} is rather involved and it requires some preliminary work.

\begin{lemma}\cite[Lemma~3.1]{GS} \label{Kernel estimate}
	Let $U\colon \R \times \R \rightarrow  (0,\infty) $ be a regular kernel. Then there exists an $\alpha_C$, $0 < \alpha_C < 1$, such that, for all
	$0 < \alpha \leq \alpha_C$ and any sequence $x_1 \leq x_2 \leq \dots \leq x_n$, one has
	\begin{equation*}
		U(x_1, x_n) \lesssim \left(\sum_{i=1}^{n-1} U(x_i, x_{i+1})^{\alpha} \right)^{\frac{1}{\alpha}}.
	\end{equation*}
\end{lemma}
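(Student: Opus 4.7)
The plan is to reduce the claim to a Frink-type metrization lemma for quasi-metrics. Observe that the regularity condition combined with the elementary estimate $a+b \le 2\max(a,b)$ gives the sup-type quasi-triangle inequality
\[
U(x,z) \le 2C \, \max\bigl(U(x,y), U(y,z)\bigr)
\]
whenever $x \le y \le z$. Setting $\rho(x,y) := U(x,y)^\alpha$ and $\eta := (2C)^\alpha$ and raising this to the $\alpha$-th power yields $\rho(x,z) \le \eta \max\bigl(\rho(x,y), \rho(y,z)\bigr)$. I would take $\alpha_C$ to be any positive constant with $(2C)^{\alpha_C} \le \sqrt{2}$, for instance $\alpha_C := \log 2/(2\log(2C))$ (which satisfies $0<\alpha_C<1$ for any $C\ge 1$), so that $\eta \le \sqrt{2}$ for every admissible $\alpha$. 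The lemma is then equivalent, after raising to the $1/\alpha$-th power, to establishing a constant $M = M(\eta)$ for which
\[
\rho(x_1,x_n) \le M \sum_{i=1}^{n-1} \rho(x_i,x_{i+1}).
\]

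The plan for this last inequality is strong induction on $n$. Let $S := \sum_{i=1}^{n-1} \rho(x_i,x_{i+1})$ and let $k$ be the largest index with $\sum_{i=1}^{k-1} \rho(x_i,x_{i+1}) \le S/2$; then necessarily $\sum_{i=k+1}^{n-1} \rho(x_i,x_{i+1}) < S/2$. Applying the induction hypothesis to the shorter sub-chains $x_1,\dots,x_k$ and $x_{k+1},\dots,x_n$ gives $\rho(x_1,x_k) \le MS/2$ and $\rho(x_{k+1},x_n) \le MS/2$, while $\rho(x_k,x_{k+1}) \le S$ trivially. Two successive applications of the quasi-ultrametric inequality (first splitting at $x_{k+1}$, then at $x_k$) produce
\[
\rho(x_1,x_n) \le \eta \max\bigl(\rho(x_1,x_{k+1}), \rho(x_{k+1},x_n)\bigr) \le \eta^{2} \max\bigl(\rho(x_1,x_k), \rho(x_k,x_{k+1}), \rho(x_{k+1},x_n)\bigr).
\]
For $M \ge 2$ the right-hand maximum equals $MS/2$, giving a bound of $\eta^{2} MS/2 \le MS$ precisely because $\eta^{2} \le 2$. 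Hence the induction closes with the same $M$, and the base cases $n=2,3$ are immediate consequences of one and two applications of the quasi-ultrametric inequality respectively.

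The step I expect to be the main technical obstacle is the treatment of the degenerate cases $k=1$ or $k=n-1$, where one of the sub-chains becomes trivial, so that the induction hypothesis cannot be applied to it in the form stated; here one must separately verify the bound via only a single max-split (whose constraint $\eta \le 2$ is weaker than, hence implied by, $\eta \le \sqrt{2}$). Should one wish to reach the essentially optimal threshold $\alpha_C = 1/\log_2(2C)$, corresponding to $\eta < 2$, the same scheme works with a sharper ingredient: partition the edges into $m$ consecutive blocks of roughly balanced $\rho$-sum for a sufficiently large $m = m(\eta)$, apply the induction hypothesis to each block, and combine via $\lceil \log_2 m \rceil$ nested max-splits; the resulting coefficient $\eta^{\lceil\log_2 m\rceil}/m$ can be made arbitrarily small for $\eta<2$ by choosing $m$ large enough, which again closes the induction.
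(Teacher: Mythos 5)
The paper does not give a proof of this lemma: it cites it from Gogatishvili--Stepanov \cite[Lemma~3.1]{GS}, so there is no in-text argument to compare against. Your self-contained derivation is correct and proceeds by the standard Frink/Mac\'ias--Segovia metrization route, which is also how such estimates are obtained in the cited source: pass to the quasi-ultrametric $\rho = U^{\alpha}$ whose constant $\eta = (2C)^{\alpha}$ is forced below $\sqrt{2}$, and close a chain-splitting strong induction with $M=2$. The key inequality $\eta^{2}MS/2 \le MS$ then follows exactly from $\eta \le \sqrt 2$, and the resulting constant in $\lesssim$ is $M^{1/\alpha}$, which rightly may depend on $\alpha$ and $C$ but not on $n$ or on the chain.

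Two minor points are worth making explicit rather than leaving implicit. First, in the degenerate splits $k=1$ and $k=n-1$ you must \emph{not} invoke the induction hypothesis on the one-point sub-chain: since $U>0$, one has $\rho(x,x)>0$, so the would-be empty-sum bound $\rho(x_j,x_j)\le M\cdot 0$ is false; instead one performs a single max-split at $x_{k+1}$ (resp. $x_k$) and uses the IH only on the non-trivial piece, which, as you note, demands only $\eta\le 2$. You flagged this correctly; it just needs to be written as a separate case rather than as ``applying the IH to a trivial chain.'' Second, your explicit formula $\alpha_C=\log 2/(2\log(2C))$ lies in $(0,1)$ only when $2C>\sqrt 2$; for the remaining range $1/2\le C\le 1/\sqrt 2$ (the regularity inequality at $i=j=n$ already forces $C\ge 1/2$, hence $\eta\ge 1$, which you also need for the collapse $\eta\max(\eta\max(a,b),c)\le\eta^{2}\max(a,b,c)$) one should simply set $\alpha_C=\tfrac12$. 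These are cosmetic adjustments; the argument is sound.
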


\begin{definition}
	Let $\{x_k\}_{k\in \mathbb{Z}}$ be an increasing sequence in $(-\infty,\infty)$ such that
	\begin{equation*}
		\lim_{k\rightarrow -\infty} x_k= -\infty \quad \text{and} \quad \lim_{k\rightarrow \infty} x_k= \infty.
	\end{equation*}
	Then we say that  $\{x_k\}_{k\in \mathbb{Z}}$ is a~\textit{covering sequence}. We also admit increasing
	sequences  $\{x_k\}_{k={N-1}}^\infty$, $N\in \mathbb{Z}$ and $x_{N-1}:=- \infty$.
\end{definition}

\begin{definition}\cite{GP-discr}
	Let $\{x_k\}_{k\in \mathbb{Z}}$ be a
	sequence of positive real numbers. If $\sup_{k\in\mathbb{Z}} \frac{x_{k+1}}{x_k}  <1 $, then we say that
	$\{x_k\}_{k\in \mathbb{Z}}$ is \textit{strongly decreasing}.
\end{definition}
Throughout the rest of the paper we will assume that $\int_{x}^{\infty} w(t) dt <  \infty$ for every $x\in\R$. We then define
a strictly increasing sequence $\{x_k\}_{k= N-1}^{\infty}$ such that
\begin{equation*}
	\int_{x_k}^\infty w(t)dt = 2^{-k}, N\le k< \infty , \quad \text{and} \quad 2^{-N} < \int_{-\infty}^{\infty} w(t) dt \leq 2^{-N+1}.
\end{equation*}
Denote $x_{N-1}=- \infty$. Then it is clear that  $\{x_k\}_{k=N-1}^{\infty}$ is a covering sequence.

\begin{lemma}\label{Lemma 1}
	Let $0 < q < \infty$. Assume that $U$ is a regular kernel and $\{x_k\}_{k=N-1}^\infty$ is a covering sequence. Then
	\begin{align*}
		LHS\eqref{1-bis}
		& \approx \left( \sum_{k=N}^{\infty} 2^{-k} \left(\esssup_{x_{k-1} < y \leq
			x_{k}} U(y,x_k) \int_{x_{k-1}}^y f(t)\,dt \right)^q\right)^{\frac{1}{q}}\\
		&\hspace{0.5cm}+ \left( \sum_{k=N}^{\infty} 2^{-k} U(x_{k-1},x_k)^q
		\left(\int_{-\infty}^{x_{k-1}} f(t)\,dt\right)^q \right)^{\frac{1}{q}}
	\end{align*}
	holds for all $f\in\mathcal{M}_+$ with constants of equivalence depending only on $q$ and the constant of regularity of $U$. In particular, the constants of equivalence are independent of the covering sequence.
\end{lemma}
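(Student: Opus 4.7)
Let $I(x) := \esssup_{-\infty < y \le x} U(y,x)\int_{-\infty}^y f(t)\,dt$ denote the integrand of the LHS, and introduce
\begin{equation*}
\alpha_k := U(x_{k-1},x_k)\int_{-\infty}^{x_{k-1}} f(t)\,dt, \quad \beta_k := \esssup_{x_{k-1}<y\le x_k}U(y,x_k)\int_{x_{k-1}}^y f(t)\,dt, \quad T_k := I(x_k),
\end{equation*}
so the claimed RHS equals $\bigl(\sum_k 2^{-k}\beta_k^q\bigr)^{1/q}+\bigl(\sum_k 2^{-k}\alpha_k^q\bigr)^{1/q}$. The crucial initial observation is that $I$ is \emph{nondecreasing} in $x$: both the supremum range $\{y\le x\}$ enlarges, and for each fixed $y$ the kernel $U(y,x)$ is nondecreasing in $x$. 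Coupled with $\int_{x_{k-1}}^{x_k} w(t)\,dt = 2^{-k}$ for $k \ge N+1$ (and $\le 2^{-N}$ for $k = N$) from the construction of $\{x_k\}$, this monotonicity yields
\begin{equation*}
(\text{LHS})^q \;\approx\; \sum_{k\ge N}2^{-k}T_k^q,
\end{equation*}
and the lemma reduces to showing that $\sum_k 2^{-k}T_k^q$ is comparable with $\sum_k 2^{-k}(\alpha_k^q+\beta_k^q)$.

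The $\gtrsim$ direction is immediate: restricting the supremum defining $T_k$ to $y=x_{k-1}$ gives $T_k\ge\alpha_k$, and restricting it to $y\in(x_{k-1},x_k]$ together with $\int_{-\infty}^y f\ge\int_{x_{k-1}}^y f$ gives $T_k\ge\beta_k$, so that $T_k^q\gtrsim\alpha_k^q+\beta_k^q$. For the $\lesssim$ direction, I would decompose the supremum defining $T_j$ over the subintervals $(x_{\ell-1},x_\ell]$ with $\ell\le j$. Using monotonicity of $U$ in its first variable together with Lemma~\ref{Kernel estimate} applied to the sequence $x_{\ell-1},x_\ell,\ldots,x_j$ gives, for some $\alpha\in(0,\alpha_C]$ depending only on the regularity constant of $U$,
\begin{equation*}
T_j \;\lesssim\; \max_{\ell\le j}\Bigl(\sum_{m=\ell-1}^{j-1}u_m^\alpha\Bigr)^{1/\alpha}\int_{-\infty}^{x_\ell}f(t)\,dt, \qquad u_m:=U(x_m,x_{m+1}).
\end{equation*}
Choosing $\alpha$ small enough that $s:=q/\alpha\ge 1$, raising to the $q$th power and bounding $(\max_\ell\cdot)^q\le\sum_\ell(\cdot)^q$, multiplying by $2^{-j}$ and summing in $j$, then interchanging the summation order and applying the discrete Hardy inequality with geometric weight $\sum_j 2^{-j}\bigl(\sum_{m\le j}a_m\bigr)^s\lesssim\sum_j 2^{-j}a_j^s$ should collapse the resulting double series into a single series of the form $\sum_n 2^{-n}u_n^q\bigl(\int_{-\infty}^{x_{n+1}}f\bigr)^q$, which is in turn controlled by the RHS.

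The main obstacle is the very last dominance step. The quantity naturally produced by the argument above is $u_n\int_{x_n}^{x_{n+1}}f$, whereas the RHS uses the sharper $\beta_{n+1}$, with only the one-sided bound $\beta_{n+1}\le u_n\int_{x_n}^{x_{n+1}}f$ available in general; the naïve one-step iteration $T_j\lesssim\alpha_j+\beta_j+C\,T_{j-1}$ likewise fails to close by geometric summation in $2^{-j}$ whenever the regularity constant $C$ is not small, since the associated geometric series $\sum_m(C^q/2)^m$ diverges when $C^q\ge 2$. Resolving both issues simultaneously is precisely why Lemma~\ref{Kernel estimate} is indispensable: its multi-step $\ell^\alpha$-summation replaces the blowing-up iteration by a telescoping bound, and any excess of $u_n\int_{x_n}^{x_{n+1}}f$ over $\beta_{n+1}$ is absorbed into the neighbouring $\alpha_{n+1}$ contribution after a careful bookkeeping against the geometric weight $2^{-k}$. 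I expect this bookkeeping to be the technical heart of the proof.
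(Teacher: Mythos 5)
Your reduction to $\sum_k 2^{-k}T_k^q$, the easy direction, and the idea of invoking Lemma~\ref{Kernel estimate} together with a geometric-weight Hardy inequality are all in line with the paper. But the flagged ``bookkeeping'' obstacle is a genuine gap, and the fix you sketch cannot work: the quantity $u_n\int_{x_n}^{x_{n+1}}f$ that your decomposition produces is \emph{not} controlled by $\alpha_{n+1}+\beta_{n+1}$. Take the regular kernel $U(y,x)=g(y)$ with $g\equiv 1$ on $(-\infty,x_n]$ and $g\equiv\varepsilon$ on $(x_n,\infty)$, and let $f$ be supported on $(x_n,x_{n+1}]$. Then $\alpha_{n+1}=0$, $\beta_{n+1}=\varepsilon\int_{x_n}^{x_{n+1}}f$, yet $u_n\int_{x_n}^{x_{n+1}}f=\int_{x_n}^{x_{n+1}}f$; no absorption into neighbouring terms can repair a factor $\varepsilon^{-1}$. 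The loss occurs at the very first step, where you bound $\esssup_{x_{\ell-1}<y\le x_\ell}U(y,x_j)\int_{-\infty}^y f$ by $U(x_{\ell-1},x_j)\int_{-\infty}^{x_\ell}f$: for the diagonal block $\ell=j$ this throws away exactly the structure that makes $\beta_j$ small.

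The missing idea is to apply the \emph{regularity} splitting \eqref{oin}, $U(y,x_j)\lesssim U(y,x_i)+U(x_i,x_j)$ for $y\in(x_{i-1},x_i]$, \emph{before} reaching for Lemma~\ref{Kernel estimate}. This separates $\sup_{i\le j}$ into a ``diagonal'' part $\esssup_{x_{i-1}<y\le x_i}U(y,x_i)\int_{-\infty}^y f$ and a ``tail'' part $U(x_i,x_j)\int_{-\infty}^{x_i}f$. The diagonal part is collapsed to $i=j$ by \cite[Lemma~3.2(ii)]{GP-discr} (the $\sup$-form of the strongly decreasing sequence lemma, which the paper cites), producing precisely $\beta_j$ plus an $\alpha_j$ remainder upon splitting $\int_{-\infty}^y=\int_{-\infty}^{x_{j-1}}+\int_{x_{j-1}}^y$. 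Only the tail part is fed into Lemma~\ref{Kernel estimate} and the geometric Hardy inequality \eqref{E:GP-lemma2}, and there the integral endpoint is $x_i$ (not $x_{i+1}$), so it collapses to $U(x_{j-1},x_j)\int_{-\infty}^{x_{j-1}}f=\alpha_j$ directly, never producing the intractable $u_n\int_{x_n}^{x_{n+1}}f$. That ordering of the two tools is the technical heart of the paper's argument and is what your proposal omits.
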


\begin{proof}
	It is easy to see that,
	\begin{align}
		LHS\eqref{1-bis} & = \left(\sum_{k=N}^{\infty} \int_{x_{k-1}}^{x_k} \left(\esssup_{-\infty < y\leq x}U(y,x)\int_{-\infty}^y f(t)\,dt\right)^q w(x)dx\right)^{\frac{1}{q}} \notag  \\
		& \approx \left( \sum_{k=N}^{\infty}  2^{-k} \left(\esssup_{-\infty < y\le x_k} U(y,x_k)
		\int_{-\infty}^y f(t)\,dt\right)^q \right)^{\frac{1}{q}} \label{LHS.3.4 equiv.}.
	\end{align}
	It follows from~\cite[Lemma~3.2(ii)]{GP-discr} that if $\{\alpha_k\}_{k\in\Z}$ and $\{\beta_k\}_{k\in\Z}$ are sequences of nonnegative real numbers such that $\{\alpha_k\}$ is strongly decreasing, then
	\begin{align*}
		\sum_{k \in \mathbb{Z}} \alpha_k \sup_{-\infty < i\leq k} \beta_i   &\approx \sum_{k \in \mathbb{Z}} \alpha_k \beta_k.
	\end{align*}
	Applying this result and using also the regularity of $U$, we get
	\begin{align}\label{LHS1}
		LHS\eqref{1-bis} & \approx  \left( \sum_{k=N}^{\infty}  2^{-k}\sup_{N\le i\leq k} \left(\esssup_{x_{i-1}< y\le x_i} U(y,x_k) 	\int_{-\infty}^y f(t)\,dt\right)^q \right)^{\frac{1}{q}}\notag\\
		& \lesssim  \left( \sum_{k=N}^{\infty}  2^{-k} \sup_{N\le i\leq k} \left(\esssup_{x_{i-1}< y\le x_i} U(y,x_i)
		\int_{-\infty}^y f(t)\,dt\right)^q \right)^{\frac{1}{q}}\notag\\
		& \hspace{1.5cm} +   \left( \sum_{k=N}^{\infty}  2^{-k}	\sup_{N\le i \leq k} \left( U(x_i,x_k) 	             \int_{-\infty}^{x_i} f(t)\,dt\right)^q \right)^{\frac{1}{q}}\notag\\
		&  \approx \left( \sum_{k=N}^{\infty}  2^{-k}  \left(\esssup_{x_{k-1}< y\le x_k} U(y,x_k)
		\int_{-\infty}^y f(t)\,dt\right)^q \right)^{\frac{1}{q}}\notag\\
		& \hspace{1.5cm} + \left( \sum_{k=N}^{\infty}  2^{-k} \sup_{N\le i \leq k} \left( U(x_i,x_k)                  \int_{-\infty}^{x_i} f(t)\,dt\right)^q \right)^{\frac{1}{q}}\notag\\
		& \approx \left( \sum_{k=N}^{\infty}  2^{-k}  \left(\esssup_{x_{k-1}< y\le x_k} U(y,x_k)\int_{x_{k-1}}^y         f(t)\,dt\right)^q \right)^{\frac{1}{q}}\notag\\
		&\hspace{1.5cm} +\left( \sum_{k=N}^{\infty}  2^{-k}	 \left(U(x_{k-1},x_k)\int_{-\infty}^{x_{k-1}} f(t)\,dt \right)^q \right)^{\frac{1}{q}}\notag\\
		& \hspace{2cm} + \left( \sum_{k=N}^{\infty}  2^{-k} \sup_{N\le  i \leq k} \left( U(x_i,x_k) \int_{-\infty}^{x_i} f(t)\,dt\right)^q \right)^{\frac{1}{q}}.
	\end{align}
	In view of Lemma~\ref{Kernel estimate}, there exists  an $\alpha_C$: $0 < \alpha_C < 1$ such that, for all $0 < \alpha \leq \alpha_C$,
	\begin{equation}\label{kernel est.}
		U(x_i, x_k) \lesssim \left(\sum_{j=i}^{k-1} U(x_{j}, x_{j+1})^{\alpha} \right)^{\frac{1}{\alpha}}
	\end{equation}
	holds.
	It follows from~\cite[Lemma~3.1(ii)]{GP-discr} that if $\{\alpha_k\}_{k\in\Z}$ and $\{\beta_k\}_{k\in\Z}$ are sequences of nonnegative real numbers such that $\{\alpha_k\}$ is strongly decreasing and $p\in(0,\infty)$, then
	\begin{align}\label{E:GP-lemma2}
		\sum_{k \in \mathbb{Z}} \alpha_k^p \bigg(\sum_{i=-\infty}^{k} \beta_i \bigg)^p  &\approx \sum_{k \in
			\mathbb{Z}} \alpha_k^p  \beta_k^p.
	\end{align}
	Applying this result, we get
	\begin{align*}
		&\left( \sum_{k=N}^{\infty}  2^{-k} 	\sup_{N\le i\leq k} \left( U(x_i,x_k) \int_{-\infty}^{x_i} f(t)\,dt\right)^q \right)^{\frac{1}{q}} \\
		& \hspace{2cm}\lesssim \left( \sum_{k=N}^{\infty}  2^{-k} \sup_{N\le  i \leq k}  \left(\sum_{j=i}^{k-1} 	U(x_{j}, x_{j+1})^{\alpha} \right)^{\frac{q}{\alpha}} \left(\int_{-\infty}^{x_i} f(t)\,dt\right)^q 	\right)^{\frac{1}{q}}\\
		& \hspace{2cm} \leq \left( \sum_{k=N}^{\infty}  2^{-k} \left(\sum_{j=N}^{k-1} U(x_{j}, x_{j+1})^{\alpha}	\left(\int_{-\infty}^{x_{j}} f(t)\,dt\right)^\alpha \right)^{\frac{q}{\alpha}} \right)^{\frac{1}{q}}\\
		& \hspace{2cm} \approx  \left( \sum_{k=N}^{\infty}  2^{-k} U(x_{k-1}, x_k)^q \left(\int_{-\infty}^{x_{k-1}} f(t)\,dt\right)^q \right)^{\frac{1}{q}}.
	\end{align*}
	Plugging this in \eqref{LHS1}, the upper estimate follows. On the other hand, the lower estimate can be obtained easily from \eqref{LHS.3.4 equiv.} using the monotonicity of $U$, and the statement follows.
\end{proof}

\begin{lemma}\label{Lemma 2}
	Let $1 \leq p < \infty$ and $0 < q < \infty$. Assume that $U$ is a regular kernel and $\{x_k\}_{k=N-1}^\infty$ is a covering sequence. Then
	\begin{align*}
		LHS\eqref{2-bis}
		& \approx \left( \sum_{k=N}^{\infty} 2^{-k} \left(\int_{x_{k-1}}^{x_{k}} U(y,x_k)^p f(y)\dy \right)^{\frac{q}{p}}\right)^{\frac{p}{q}}\\
		&\hspace{0.5cm}+ \left( \sum_{k=N}^{\infty} 2^{-k} U(x_{k-1},x_k)^q
		\left(\int_{-\infty}^{x_{k-1}} f(t)\,dt\right)^{\frac{q}{p}} \right)^{\frac{p}{q}}
	\end{align*}
	holds for all $f\in\mathcal{M}_+$ with constants of equivalence depending only on $q$ and the constant of regularity of $U$. In particular, the constants of equivalence are independent of the covering sequence.
\end{lemma}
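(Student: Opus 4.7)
The plan is to follow the proof strategy of Lemma~\ref{Lemma 1}, with the integral $\int_{-\infty}^x U(y,x)^p f(y)\dy$ replacing the earlier supremum quantity and with the outer exponent $q/p$ replacing $q$. First, exploiting that the inner integral is non-decreasing in $x$ and that $\int_{x_{k-1}}^{x_k} w \approx 2^{-k}$, sandwiching on each dyadic interval and a harmless index shift (which uses $x_{N-1}=-\infty$) yield
$$LHS\eqref{2-bis}^{q/p}\approx \sum_{k\ge N} 2^{-k}\,I_k^{q/p},\qquad I_k:=\int_{-\infty}^{x_k} U(y,x_k)^p f(y)\dy.$$

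I then split $I_k=J_k+I_k'$ with $J_k=\int_{x_{k-1}}^{x_k}U(y,x_k)^p f\dy$ and $I_k'=\int_{-\infty}^{x_{k-1}}U(y,x_k)^p f\dy$. The lower bound $A+B\lesssim LHS\eqref{2-bis}^{q/p}$ is immediate: $J_k\le I_k$ handles the first term and, by monotonicity of $U$ in the first variable, $U(x_{k-1},x_k)^p F_{k-1}\le I_k'$ handles the second (where $F_{k-1}=\int_{-\infty}^{x_{k-1}} f$). For the upper bound I would apply the regularity condition $U(y,x_k)\le C(U(y,x_{k-1})+U(x_{k-1},x_k))$ (valid for $y\le x_{k-1}$), raise to the $p$-th power via $(a+b)^p\le 2^{p-1}(a^p+b^p)$, and integrate to obtain $I_k'\lesssim I_{k-1}+U(x_{k-1},x_k)^p F_{k-1}$. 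Raising to $q/p$, summing against $2^{-k}$, and using the shift identity $\sum_k 2^{-k}I_{k-1}^{q/p}=\tfrac{1}{2}\sum_k 2^{-k}I_k^{q/p}$ leads to a self-improving estimate of the form $I\le K(A+\tfrac{1}{2}I+B)$ whose shape invites an absorption.

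The expected main obstacle is precisely this absorption step: the constant $K=K(p,q,C)$ arising from raising the pointwise bound to the $q/p$-th power may exceed $2$, in which case the $\tfrac{1}{2}$-factor produced by the dyadic shift is insufficient. I would handle this by replacing the dyadic covering with a $D$-adic sequence defined by $\int_{x_k}^\infty w = D^{-k}$ with $D>K$, which improves the shift identity to $\sum D^{-k} I_{k-1}^{q/p}=D^{-1}I$ and closes the absorption; the comparability of $D$-adic and dyadic discretisations, via the strongly-decreasing-sequence lemma \cite[Lemma~3.1(ii)]{GP-discr}, then translates the result back to the dyadic form stated in the lemma. As an alternative route one may bypass the iterative regularity step altogether by applying the Kernel estimate (Lemma~\ref{Kernel estimate}) on the chain $y,\,x_{k-1},\,x_k$ with a sub-multiplicative exponent $\alpha<1$, producing a single bound free of compounding constants.
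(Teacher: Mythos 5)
Your main route is a genuinely different argument from the paper's. The paper first splits $\int_{-\infty}^{x_k} U(y,x_k)^p f\,dy$ into \emph{all} dyadic pieces $\sum_{i=N}^{k}\int_{x_{i-1}}^{x_i}$, applies regularity once at scale $i$, and then handles the resulting double sum $\sum_{i=N}^{k-1}U(x_i,x_k)^p\int_{x_{i-1}}^{x_i}f$ in closed form via Lemma~\ref{Kernel estimate} applied to the full chain $x_i<x_{i+1}<\dots<x_k$, followed by Minkowski with exponent $p/\alpha>1$ and the strongly-decreasing-sequence lemma~\eqref{E:GP-lemma2}. No absorption is needed. By contrast you split at a single point ($J_k$/$I_k'$), use regularity once to get the recursion $I_k\lesssim J_k+I_{k-1}+U(x_{k-1},x_k)^pF_{k-1}$, and close by absorption. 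You correctly flag the real obstacle: the constant in that recursion, after being raised to the power $q/p$, is of order $(2^{p-1}C^p)^{q/p}\max(1,2^{q/p-1})$ and will in general exceed $2$, so the factor $1/2$ coming from the dyadic shift cannot absorb it.

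The gap is in the fix you sketch. Passing to a $D$-adic covering $\int_{y_k}^{\infty}w=D^{-k}$ with $D$ large does close the absorption, but translating that statement back to the dyadic form asserted in the lemma is \emph{not} a direct application of \cite[Lemma~3.1(ii)]{GP-discr}. The strongly-decreasing-sequence lemma compares $\sum\alpha_k^p(\sum_{i\le k}\beta_i)^p$ with $\sum\alpha_k^p\beta_k^p$; it does not, by itself, convert a two-term $D$-adic discretization into a two-term $2$-adic one. What is actually required is to choose $D=2^L$ (so $y_k=x_{Lk}$) and then, after the $D$-adic absorption, re-split each $D$-adic block into its $L$ dyadic subintervals using the regularity condition again, carefully tracking an $L$-dependent constant and re-indexing; this is a nontrivial extra step whose difficulty is comparable to the issue it was introduced to solve. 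You also omit the finite-truncation argument that is needed before absorption when the discrete sums are not known a priori to be finite. Finally, your proposed ``alternative route'' is based on a misreading of Lemma~\ref{Kernel estimate}: applying it to the three-point chain $y,x_{k-1},x_k$ only reproduces the two-term regularity inequality and does nothing to remove the compounding constant; the real content of that lemma, and the reason it appears in the paper's proof, is that it controls an \emph{arbitrarily long} chain $x_i<\dots<x_k$ with a single universal constant, which is precisely what lets the paper avoid absorption altogether.
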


\begin{proof}
	As in \eqref{LHS.3.4 equiv.}, it is clear that
	\begin{equation} \label{LHS.3.5 equiv.}
		LHS\eqref{2-bis} \approx \left(\sum_{k=N}^{\infty}2^{-k} \left(\int_{-\infty}^{x_k} U(y,x_k)^p f(y) \dy \right)^{\frac{q}{p}} \right)^{\frac{p}{q}}.
	\end{equation}
	Using the regularity of $U$ and applying~\eqref{E:GP-lemma2}, we have that
	\begin{align}
		LHS\eqref{2-bis} &\approx \left(\sum_{k=N}^{\infty}2^{-k} \left(\sum_{i=N}^k\int_{x_{i-1}}^{x_i} U(y,x_k)^p f(y) \dy \right)^{\frac{q}{p}} \right)^{\frac{p}{q}}\notag\\
		& \lesssim \left(\sum_{k=N}^{\infty}2^{-k} \left(\sum_{i=N}^k \int_{x_{i-1}}^{x_{i}}
		U(y,x_i)^p f(y) dy \right)^{\frac{q}{p}} \right)^{\frac{p}{q}} \notag\\
		& \hspace{0.5cm} + \left(\sum_{k=N}^{\infty}2^{-k} \left(\sum_{i=N}^{k}  U(x_i,x_k)^p\int_{x_{i-1}}^{x_{i}}  f(y) dy \right)^{\frac{q}{p}} \right)^{\frac{p}{q}} \notag\\
		&\lesssim	\left(\sum_{k=N}^{\infty}2^{-k} \left( \int_{x_{k-1}}^{x_{k}}
		U(y,x_k)^p f(y) dy \right)^{\frac{q}{p}} \right)^{\frac{p}{q}} \notag\\
		& \hspace{0.5cm} +\left(\sum_{k=N}^{\infty}2^{-k} \left(\sum_{i=N}^{k-1} U(x_i,x_k)^p \int_{x_{i-1}}^{x_{i}}  f(y) dy \right)^{\frac{q}{p}} \right)^{\frac{p}{q}}. 	 \label{LHS2}
	\end{align}
	Using \eqref{kernel est.}, and applying Minkowski's inequality with $\frac{p}{\alpha} >1$, we get
	\begin{align*}
		& \left(\sum_{k=N}^{\infty}2^{-k} \left(\sum_{i=N}^{k-1} U(x_i,x_k)^p \int_{x_{i-1}}^{x_{i}}  f(y) dy \right)^{\frac{q}{p}} \right)^{\frac{p}{q}} \\
		& \hspace{2cm}\lesssim \left(\sum_{k=N}^{\infty} 2^{-k} \left(\sum_{i=N}^{k-1} 	\left( \sum_{j=i}^{k-1} U(x_{j}, x_{j+1})^{\alpha} \right)^{\frac{p}{\alpha}} \int_{x_{i-1}}^{x_{i}}  f(y) dy \right)^{\frac{q}{p}} \right)^{\frac{p}{q}} \\
		&\hspace{2cm} \lesssim \left(\sum_{k=N}^{\infty} 2^{-k} \left(\sum_{j=N}^{k-1} U(x_{j},x_{j+1})^{\alpha} 	\left(\sum_{i=N}^{j}\int_{x_{i-1}}^{x_{i}}  f(y) dy \right)^{\frac{p}{\alpha}} 	\right)^{\frac{q}{\alpha}} \right)^{\frac{p}{q}}\\
		&\hspace{2cm} = \left(\sum_{k=N}^{\infty} 2^{-k} \left(\sum_{j=N}^{k-1} U(x_{j},x_{j+1})^{\alpha} \left(\int_{-\infty}^{x_{j}}  f(y) dy \right)^{\frac{p}{\alpha}}
		\right)^{\frac{q}{\alpha}} \right)^{\frac{p}{q}}.
	\end{align*}
	Therefore,~\eqref{E:GP-lemma2} yields
	\begin{equation*}
		\left(\sum_{k=N}^{\infty}2^{-k} \left(\sum_{i=N}^{k-1} U(x_i,x_k)^p \int_{x_{i-1}}^{x_{i}}  f(y) dy \right)^{\frac{q}{p}} \right)^{\frac{p}{q}} \lesssim \left(\sum_{k=N}^{\infty}2^{-k} U(x_{k-1}, x_k)^q \left(\int_{-\infty}^{x_{k-1}} f(y) dy 	\right)^{\frac{q}{p}} \right)^{\frac{p}{q}}.
	\end{equation*}
	Inserting this estimate in \eqref{LHS2} completes the proof of the upper estimate. The estimate from below can be obtained easily from \eqref{LHS.3.5 equiv.} using the monotonicity of $U$. The proof is complete.
\end{proof}

\begin{lemma} \label{Lemma 3}
	Let $1\le p<\infty$ and $0<q< \infty$. Assume that $U$ is a regular kernel and let $\{x_k\}_{k=N-1}^{\infty}$ is a covering sequence. Then
	\begin{align*}
		LHS\eqref{3-bis} & \approx \left( \sum_{k=N}^{\infty} 2^{-k} \left(\esssup_{x_{k-1} < y \leq x_{k}} U(y,x_k)^p \int_{x_{k-1}}^y f (t)\,dt\right)^{\frac{q}{p}}\right)^{\frac{p}{q}}\\
		&\hspace{0.5cm}+ \left( \sum_{k=N}^{\infty} 2^{-k} U(x_{k-1},x_k)^q \left(\int_{-\infty}^{x_{k-1}} f(t)\,dt\right)^{\frac{q}{p}} \right)^{\frac{p}{q}}
	\end{align*}
	holds for all $f\in\mathcal{M}_+$ with constants of equivalence depending only on $q$ and the constant of regularity of $U$. In particular, the constants of equivalence are independent of the covering sequence.
\end{lemma}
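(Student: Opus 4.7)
The plan is to reduce Lemma~\ref{Lemma 3} to Lemma~\ref{Lemma 1} by the substitution $U \mapsto U^p$ and $q \mapsto q/p$. Since $1\le p < \infty$ and $0 < q < \infty$, the substituted parameter $q/p$ again lies in $(0,\infty)$, so it is an admissible value for the exponent appearing in Lemma~\ref{Lemma 1}. Moreover, as already observed just after~\eqref{oin}, if $U$ is a regular kernel then so is $U^p$, with constant of regularity controlled by $p$ and the constant of regularity of $U$. Hence all hypotheses of Lemma~\ref{Lemma 1} are satisfied for the substituted data.

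Under this substitution, the integrand in the left-hand side of~\eqref{1-bis} becomes
\begin{equation*}
\left(\esssup_{-\infty < y \le x} U(y,x)^p \int_{-\infty}^y f(t)\dt\right)^{q/p},
\end{equation*}
which, together with the outer exponent $p/q$, turns LHS\eqref{1-bis} into precisely LHS\eqref{3-bis}. Similarly, the first term on the right in Lemma~\ref{Lemma 1} becomes the first term in the present statement, and the second term becomes
\begin{equation*}
\left(\sum_{k=N}^{\infty} 2^{-k}\bigl(U(x_{k-1},x_k)^p\bigr)^{q/p}\left(\int_{-\infty}^{x_{k-1}} f(t)\dt\right)^{q/p}\right)^{p/q}
= \left(\sum_{k=N}^{\infty} 2^{-k} U(x_{k-1},x_k)^{q} \left(\int_{-\infty}^{x_{k-1}} f(t)\dt\right)^{q/p}\right)^{p/q},
\end{equation*}
which matches the second term in the statement. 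Applying Lemma~\ref{Lemma 1} to the pair $(U^p, q/p)$ in place of $(U,q)$ then yields the claimed equivalence, with equivalence constants depending only on $p$, $q$ and the constant of regularity of $U$, in particular independent of the covering sequence.

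I do not expect any genuine obstacle: the only point to verify is that nothing in Lemma~\ref{Lemma 1} silently requires the first exponent to exceed $1$, which it does not, and that the outer $p/q$-power commutes with the equivalence $\alpha \approx \beta + \gamma$ (which it does, since all quantities are nonnegative and $p/q > 0$). As a sanity check one can also rerun the argument of Lemma~\ref{Lemma 1} verbatim: decompose $\int_{-\infty}^{\infty}$ as $\sum_{k\ge N}\int_{x_{k-1}}^{x_k}$, use $\int_{x_{k-1}}^{x_k} w \approx 2^{-k}$, apply the strongly-decreasing-sequence identity~\cite[Lemma~3.2]{GP-discr} to pull the supremum onto the single interval $(x_{k-1},x_k]$, and finally use the regularity of $U$ together with the chain estimate from Lemma~\ref{Kernel estimate} and~\cite[Lemma~3.1(ii)]{GP-discr} to handle the tail $\int_{-\infty}^{x_{k-1}}$. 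The only place where the hypothesis $p\ge 1$ would be used in such a direct route is for the admissibility of Minkowski's inequality with exponent $p/\alpha \ge 1/\alpha > 1$, exactly as in Lemma~\ref{Lemma 2}; but since the reduction to Lemma~\ref{Lemma 1} bypasses this entirely, the cleanest proof is the one proposed.
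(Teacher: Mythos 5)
Your reduction is correct and clean: applying Lemma~\ref{Lemma 1} to the pair $(U^p, q/p)$ does turn $LHS\eqref{1-bis}$ into $LHS\eqref{3-bis}$ and the right-hand side into exactly the claimed two terms, and the hypotheses ($U^p$ regular, $q/p\in(0,\infty)$) are satisfied. The paper omits the proof, remarking only that it is ``similar to that of Lemma~\ref{Lemma 1}''; your observation that it is in fact a literal instance of Lemma~\ref{Lemma 1} under the substitution $U\mapsto U^p$, $q\mapsto q/p$ is a somewhat sharper and more economical way to make that similarity precise, avoiding any need to re-run the discretization and the chain estimate. One small point worth noting: the regularity constant of $U^p$ is of the form $C^p\max(1,2^{p-1})$ where $C$ is that of $U$, so the equivalence constants you obtain depend on $p$ as well as on $q$ and $C$; the statement of the lemma (like that of Lemma~\ref{Lemma 2}) lists only $q$ and $C$, which is a mild imprecision in the paper rather than a flaw in your argument.
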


\begin{proof}
	The proof is similar to that of Lemma~\ref{Lemma 1}, therefore we omit it.
\end{proof}

\begin{proof}[Proof of Theorem~\ref{main}]
	
	We will show $\eqref{1-bis}\Rightarrow\eqref{2-bis}\Rightarrow \eqref{3-bis}\Rightarrow\eqref{1-bis}$.
	
	{\rm  \eqref{1-bis} $\Rightarrow$\eqref{2-bis}.}
	Assume first that \eqref{1-bis} holds. Then, in view of Lemma~\ref{Lemma 1},
	\begin{align}
		\label{In 2.1}
		\left( \sum_{k=N}^{\infty} 2^{-k} \left(\esssup_{ x_{k-1} < y  \leq x_k} U(y,x_k) \int_{x_{k-1}}^y f(t)\,dt
		\right)^{q}\right)^{\frac{1}{q}}
		&\lesssim C_1\left( \sum_{k=N}^{\infty}  \int_{x_{k-1}}^{x_{k}}
		f(t)^p v(t)\,dt\right)^{\frac{1}{p}}\\
		\intertext{and}
		\label{In 2.2}
		\left( \sum_{k=N}^{\infty}  2^{-k} U(x_{k-1},x_k)^q \left(\int_{-\infty}^{x_{k-1}} f(t)\,dt\right)^{q} \right)^{\frac{1}{q}}
		&\lesssim C_1\left(\sum_{k=N}^{\infty}  \int_{x_{k-1}}^{x_{k}}
		f(t)^p v(t)\,dt\right)^{\frac1p}
	\end{align}
	hold for all $f\in\mathcal{M}_+$. Let $h_k \in \mathcal{M}_+$ be a function which saturates the Hardy inequalities (see \cite{OK}), that is, function satisfying
	\begin{align*}
		\supp h_k \subset [x_{k-1}, x_{k}], \quad \int_{x_{k-1}}^{x_{k}} h_k(t)^pv(t)\,dt =1\quad \\
		\intertext{and}
		\esssup_{x_{k-1} < y \leq x_{k}} U(y, x_k) \int_{x_{k-1}}^y h_k(t)\,dt \gtrsim \esssup_{ x_{k-1} < y \leq x_k} U(y,x_k) \sigma_p(x_{k-1},y),
	\end{align*}
	in which the constant of $\gtrsim$ depends only on $p$ and the constant of regularity of $U$.
	Then we define $f = \sum_{m=N}^{\infty} a_m h_m$, where  $\{a_k\} \in \RpZ$. Testing inequality \eqref{In 2.1}, we have
	\begin{equation} \label{In 2.21}
		\left( \sum_{k=N}^{\infty}  2^{-k} \left(\esssup_{x_{k-1} < y \leq x_{k}} U(y,x_k) \sigma_p(x_{k-1},y)\right)^q a_k^{q} \right)^{\frac{1}{q}} \lesssim  C_1 \left(\sum_{k=N}^{\infty}	a_k^p\right)^{\frac1p}.
	\end{equation}
	Using the well-known saturation of the H\"older inequality, we find the functions $g_k \in \mathcal{M}_+$, $k\in\Z$, such that
	\begin{equation*}
		\supp g_k \subset [x_{k-1}, x_{k}], \quad \int_{x_{k-1}}^{x_{k}} g_k(t)^pv(t)\,dt =1\quad \text{and}
		\int_{x_{k-1}}^{x_{k}}  g_k(t)\,dt \ge\frac{1}{2} \sigma_p(x_{k-1},x_k).
	\end{equation*}
	Testing inequality \eqref{In 2.2} with $f = \sum_{m=N}^{\infty}  a_m g_m$  where  $\{a_k\} \in \RpZ$, we get	
	\begin{equation} \label{In 2.22}
		\left( \sum_{k=N}^{\infty}  2^{-k} U(x_{k-1},x_k)^q \left(\sum_{i=N}^{k-1} a_i \sigma_p(x_{i-1},x_i) \right)^{q} \right)^{\frac{1}{q}} \lesssim  C_1 \left(\sum_{k=N}^{\infty}	a_k^p\right)^{\frac{1}{p}}.
	\end{equation}
	By Theorem~\ref{disc. equiv.}, the inequality \eqref{In 2.22} is equivalent to the following one:
	\begin{equation} \label{In 2.23}
		\left( \sum_{k=N}^{\infty}  2^{-k} U(x_{k-1},x_k)^q \left(\sum_{i=N}^{k-1} a_i^p \sigma_p(-\infty,x_i)^p \right)^{\frac{q}{p}} \right)^{\frac{1}{q}}  \lesssim  C_1 \left(\sum_{k=N}^{\infty}   a_k^p\right)^{\frac{1}{p}}.
	\end{equation}
	On the other hand, by Lemma~\ref{Lemma 2} and  using  H\"{o}lder inequality, we get
	\begin{align*}
		LHS\eqref{2-bis}
		&\lesssim \left( \sum_{k=N}^{\infty} 2^{-k} \left(\esssup_{x_{k-1} \leq y < x_{k}} U(y,x_k) \sigma_p(-\infty,y)\right)^q\left(\int_{x_{k-1}}^{x_k} f(y)\sigma_p(-\infty,y)^{-p}dy \right)^{\frac{q}{p}}\right)^{\frac{p}{q}} \notag\\
		+& \left( \sum_{k=N}^{\infty} 2^{-k} U(x_{k-1},x_k)^q
		\left(\sum_{i=N}^{k-1} \left(\int_{x_{i-1}}^{x_i} f(y)\sigma_p(-\infty,y)^{-p}dy\right)\sigma_p(-\infty,x_i)^p\right)^{\frac{q}{p}} \right)^{\frac{p}{q}}
		\\
		\lesssim & \left( \sum_{k=N}^{\infty} 2^{-k} \left(\esssup_{x_{k-1} \leq y < x_{k}} U(y,x_k) \sigma_p(x_{k-1},y)\right)^q\left(\int_{x_{k-1}}^{x_k} f(y)\sigma_p(-\infty,y)^{-p} dy\right)^{\frac{q}{p}}\right)^{\frac{p}{q}} \notag\\
		+& \left( \sum_{k=N}^{\infty} 2^{-k} U(x_{k-1},x_k)^q
		\sigma_p(-\infty,x_{k-1})^q \left(\int_{x_{k-1}}^{x_k} f(y)\sigma_p(-\infty,y)^{-p}dy \right)^{\frac{q}{p}} \right)^{\frac{p}{q}}
		\\
		+& \left( \sum_{k=N}^{\infty} 2^{-k} U(x_{k-1},x_k)^q
		\left(\sum_{i=N}^{k-1} \left(\int_{x_{i-1}}^{x_i} f(y)\sigma_p(-\infty,y)^{-p} dy\right)\sigma_p(-\infty,x_i)^p\right)^{\frac{q}{p}} \right)^{\frac{p}{q}}
	\end{align*}
	
	Using inequalities \eqref{In 2.21} and  \eqref{In 2.23}  for sequences $a_k=\left(\int_{x_{k-1}}^{x_k} f(y) \sigma_p(-\infty,y)^{-p}\,dy \right)^{\frac1p}$, also inequality  \eqref{In 2.23} for sequences $a_k= \left( \int_{x_{k}}^{x_{k+1}} f(y) \sigma_p(-\infty,y)^{-p}\,dy\right)^{\frac1p}$, we obtain \eqref{2-bis} with $C_2 \lesssim C_1^p$.
	
	{\rm  $\eqref{2-bis}\Rightarrow\eqref{3-bis}$.}
	By Lemma~\ref{Lemma 2} and Lemma~\ref{Lemma 3} and estimate
	
	\begin{equation*}
		\esssup_{x_{k-1}< y \leq x_{k}} U(y,x_k)^p
		\int_{x_{k-1}}^{y} f(y)\,dy \le  \int_{x_{k-1}}^{x_{k}} U(y,x_k)^p f(y)dy.
	\end{equation*}
	we have
	\begin{equation*}
		LHS\eqref{3-bis}\lesssim LHS\eqref{2-bis}.
	\end{equation*}
	and  implication $\eqref{2-bis}\Rightarrow\eqref{3-bis}$ follows with $C_3 \leq C_2$.
	
	\

	{\rm \eqref{3-bis} $\Rightarrow$\eqref{1-bis}.}
	Assume first that \eqref{3-bis} holds. Then, in view of Lemma~\ref{Lemma 3},
	\begin{equation}\label{In 1.1}
		\bigg( \sum_{k=N}^{\infty} 2^{-k} \bigg(\esssup_{ x_{k-1} < y  \leq x_k} U(y,x_k)^p \int_{x_{k-1}}^y f(t)\,dt \bigg)^{\frac{q}{p}}\bigg)^{\frac{p}{q}} \lesssim  C_3 \sum_{k=N}^{\infty}  \int_{x_{k-1}}^{x_{k}} f(y) \sigma_p(-\infty,y)^{-p} dy
	\end{equation}
	and
	\begin{equation}\label{In 1.2}
		\bigg( \sum_{k=N}^{\infty}  2^{-k} U(x_{k-1},x_k)^q \bigg(\int_{-\infty}^{x_{k-1}} f(t)\,dt\bigg)^{\frac{q}{p}}	\bigg)^{\frac{p}{q}}  \lesssim  C_3 \sum_{k=N}^{\infty}  \int_{x_{k-1}}^{x_{k}} f(y) \sigma_p(-\infty,y)^{-p} dy,
	\end{equation}
	hold for all $f\in\mathcal{M}_+$.
	
	Let $h_k \in	\mathcal{M}_+$ be a function that saturates the Hardy inequalities (see \cite{OK}), that is, a function satisfying
	\begin{align*}
		\supp h_k \subset [x_{k-1}, x_{k}], \quad \int_{x_{k-1}}^{x_{k}} h_k(y) \sigma_p(-\infty,y)^{-p} dy=1\quad \\
		\intertext{and}
		\esssup_{x_{k-1} < y \leq x_{k}} U(y, x_k)^p \int_{x_{k-1}}^y h_k(t)\,dt \gtrsim \esssup_{ x_{k-1} < y \leq x_k} U(y,x_k)^p \sigma_p(-\infty,y)^p,
	\end{align*}
	in which the constant of $\gtrsim$ depends only on $p$ and the constant of regularity of $U$.
	Then we define $f = \sum_{m=N}^{\infty} a_m^p h_m$, where  $\{a_k\} \in \RpZ$.  Testing inequality \eqref{In 1.1}, we have
	\begin{equation} \label{In 1.21}
		\left( \sum_{k=N}^{\infty}  2^{-k} \left(\esssup_{x_{k-1} < y \leq x_{k}} U(y,x_k) \sigma_p(-\infty,y)\right)^q a_k^{q}
		\right)^{\frac{1}{q}}  \lesssim C_3^{\frac{1}{p}} \left(\sum_{k=N}^{\infty}	a_k^p\right)^{\frac1p}.
	\end{equation}
	We can find  functions $g_k \in \mathcal{M}_+$, $k\in\Z$, such that
	\begin{equation*} \supp g_k \subset [x_{k-1}, x_{k}], \quad \int_{x_{k-1}}^{x_{k}} g_k(y) \sigma_p(-\infty,y)^{-p}dy =1\quad \text{and} 	 \int_{x_{k-1}}^{x_{k}}  g_k(t)\,dt \ge\frac{1}{2} \sigma_p(-\infty,x_k)^{p}.
	\end{equation*}
	Testing inequality \eqref{In 1.2} with $f = \sum_{m=N}^{\infty}  a_m g_m$,  where  $\{a_k\} \in \RpZ$, we have
	
	\begin{equation} \label{In 1.22}
		\left( \sum_{k=N}^{\infty}  2^{-k} U(x_{k-1},x_k)^q \left(\sum_{i=N}^{k-1} a_i \sigma_p(-\infty,x_i)^p \right)^{\frac{q}{p}}
		\right)^{\frac{p}{q}}  \lesssim C_3 \sum_{k=N}^{\infty}   a_k.
	\end{equation}
	By Theorem~\ref{disc. equiv.} the inequality \eqref{In 1.22} is equivalent to the following one:
	\begin{equation} \label{In 1.23}
		\left( \sum_{k=N}^{\infty}  2^{-k} U(x_{k-1},x_k)^q \left(\sum_{i=N}^{k-1} a_i \sigma_p(x_{i-1},x_i) \right)^{q} \right)^{\frac{1}{q}} \lesssim C_3^{\frac{1}{p}} \left(\sum_{k=N}^{\infty}   a_k^p\right)^{\frac{1}{p}}.
	\end{equation}
	By Lemma~\ref{Lemma 1},  using the Hardy inequality  (see \cite{OK}) and the H\"{o}lder inequality, we get
	\begin{align*}
		LHS\eqref{1-bis}& \lesssim \left( \sum_{k=N}^{\infty} 2^{-k} \left(\esssup_{x_{k-1} < y \leq
			x_{k}} U(y,x_k) \sigma_p(x_{k-1},y)\right)^q\left(\int_{x_{k-1}}^{x_k} f(t)^pv(t)\,dt \right)^{\frac{q}{p}}\right)^{\frac{1}{q}}\\
		&\hspace{0.5cm}+ \left( \sum_{k=N}^{\infty} 2^{-k} U(x_{k-1},x_k)^q
		\left(\sum_{i=-\infty}^{k-1} \left(\int_{x_{i-1}}^{x_i} f(t)^pv(t)\,dt\right)^{\frac1p}\sigma_p(x_{i-1},x_i)\right)^q \right)^{\frac{1}{q}}.
	\end{align*}
	Using inequalities \eqref{In 1.21} and \eqref{In 1.23} for $a_k= \left(\int_{x_{k-1}}^{x_k} f^pv\right)^{\frac1p}$ we obtain \eqref{1-bis}  with $C_1 \lesssim C_3^{\frac{1}{p}}$.
\end{proof}

\begin{proof}[Proof of Theorem~\ref{T:supremalpge}]
	The assertion follows from the combination of Theorem~\ref{T:main-prop1}, Theorem~\ref{T:main-supremal} and Theorem~\ref{main}.
\end{proof}

\begin{proof}[Proof of Theorem~\ref{T:supremalpge1}]
	It follows immediately from Theorem~\ref{T:supremalpge}, namely from the equivalence of $C$ and $C'$, that the inequality~\eqref{E:main-supremal-small} is equivalent to~\eqref{E:C'}. Therefore, the assertion is a~consequence of Theorem~\ref{Th1.1}.
\end{proof}


\

\end{document}